\numberwithin{equation}{section}
\newcommand{\N}{\mathbb N}
\def\be#1\ee{\begin{equation}#1\end{equation}}
\newtheorem{proposition}{Proposition}
\theoremstyle{definition}
\newtheorem{alg}{Algorithm}[section]
\newtheorem{remark}{Remark}
\def\RR{\mathbb R}
\def\cm{c_{\textrm{max}}}
\def\ps{\p_{\infty}}
\def\Ur{\textcolor{black}{V_r}}
\def\Ua{\textcolor{black}{V_a}}
\def\p{\rho}
\def\N{\mathcal N}
\def\L{\mathcal L}
\def\be{\begin{equation}}
\def\ee{\end{equation}}
\def\bea{\begin{eqnarray}}
\def\eea{\end{eqnarray}}
\title{Opinion dynamics over complex networks:\\ kinetic modeling and numerical methods}
\author{Giacomo Albi\thanks{Fakult\"at f\"ur Mathematik, Technische Univarsit\"at M\"unchen, Germany }\qquad  Lorenzo Pareschi\thanks{Department of Mathematics and Computer Science, University of
Ferrara, Italy}
\qquad 
Mattia Zanella\thanks{Department of Mathematics and Computer Science, University of
Ferrara, Italy}}
\begin{document}
\maketitle

%
%
%
%
%
%
\begin{abstract}
In this paper we consider the modeling of opinion dynamics over time dependent large scale networks. A kinetic description of the agents' distribution over the evolving network is considered which combines an opinion update based on binary interactions between agents with a dynamic creation and removal process of new connections. The number of connections of each agent influences the spreading of opinions in the network but also the way connections are created is influenced by the agents' opinion. The evolution of the network of connections is studied by showing that its asymptotic behavior is consistent both with Poisson distributions and truncated power-laws. In order to study the large time behavior of the opinion dynamics a mean field description is derived which allows to compute exact stationary solutions in some simplified situations. Numerical methods which are capable to describe correctly the large time behavior of the system are also introduced and discussed. Finally, several numerical examples showing the influence of the agents' number of connections in the opinion dynamics are reported.  
\end{abstract}



\section{Introduction}
In recent years, the importance of large scale social networks has grown enormously and their study has raised lots of attentions, with the aim to understand how their structure and connections may influence the spread of opinions and ideas through human networks \cite{Ace,APZc,Das,DL,Patt}. A major research topic is how to model the information exchange and, in particular, to understand and analyze the effects of interpersonal influence on processes such as opinion formation and creation and removal of new connections. The latter aspect is closely related to the construction of graph models for complex networks and has emerged as one of the most active research fields \cite{AB,ASBS,BA,N,S}. The empirical studies of technological and communication networks has been actively investigated thanks to a huge amount of data coming from the online platforms. From the theoretical point of view it is an unprecedented laboratory for testing the collective behavior of large populations of agents \cite{BAJ,WS}. The need to handle with millions, and often billions, of vertices implied a considerable shift of interest to large-scale statistical properties of graphs. 

In this context kinetic theory may play a major rule in designing effective models to characterize the statistical features of the opinion dynamics over such large collection of data. In particular, it can be used to analyze the so called \emph{stylized facts} of the dynamics, like the asymptotic degree distribution of the connections in the network and the large time opinion behavior. To this aim, in this paper, we extend the kinetic model of opinion formation introduced in \cite{T} to the case where each agent possesses a certain number of connections in the network. These connections evolve accordingly to a preferential attachment dynamics for the removal and creation of new connections. In this sense, the model here proposed fall in the general class of kinetic models for socio-economic problems where the dynamics of the model is influenced by additional characteristics of the agents, like personal conviction, leadership and knowledge \cite{APZa, BrTo, DT, PTa, PTb}.      
  
In principle, the modeling proposed here is not limited to a particular kind of opinion dynamics and one can adapt other models developed in the literature \cite{BMS, DMPW, SWS} to evolve over the network by following the ideas presented in this paper. We mention here that recently opinion models have been considered in the context of optimal control in \cite{AHP,APZa,APZc}. In a recent note \cite{APZc} we faced the solution of an optimal control problem for a model of opinion dynamics described by a system of ordinary differential equations over an evolving network. More precisely we considered a network with a fixed number of vertices and edges which modifies its configuration of connections in time through a preferential attachment rewiring process. 

A further contribution of the present manuscript is the development of numerical methods which are capable to describe correctly the large time behavior of the system. In particular we will focus on finite-difference schemes for the mean-field description of the opinion model over the network inspired by the well-known Chang-Cooper method \cite{BD, BCDS, CC, LLPS}. We remark that, at variance with the standard Chang-Cooper method, the Fokker-Planck model considered here is nonlinear. Similar schemes for nonlinear Fokker-Planck equations have been previously introduced in \cite{BCDS, LLPS}.  

The rest of the paper is organized as follows. In Section 2 we introduce the kinetic model and describe the evolution of the network of connections. The main properties of the network and the evolution of some macroscopic quantities, like the mean and the variance of the opinion over the network are discussed. Next in Section 3 we derive a Fokker-Planck model for the opinion dynamics under the classical quasi-invariant scaling. This permits to compute asymptotic stationary solutions of the opinion over the graph in some simplified situations. Section 4 is devoted to the construction of numerical methods for the above problems. Monte Carlo methods for the Boltzmann model and finite difference schemes for the Fokker-Planck model which are capable to describe correctly the steady states of the system are introduced. Finally in Section 5 several numerical examples illustrate our findings and show the behavior of the model. In separate Appendices we report proofs of related to the main properties of the network and to the positivity preservation property of the finite difference scheme. 

\section{The kinetic model}\label{sec:model}
In this section we introduce a general mathematical model based on a kinetic description for the study of the opinion formation on a large evolving network. 

\subsection{Opinion dynamic}
Let us consider a large system of agents interacting through a given network. 
We associate to each agent an opinion $w$, which varies continuously in a closed subset whose bounds denote two extreme and opposite opinions, and its number of connections $c$, as a discrete variable varying between $0$ and the maximum number of connections allowed by the network. Note that this maximum number typically is a fixed value which is several orders of magnitude smaller then the size the network.

We are interested in the evolution of the density function 
\begin{equation}\label{eq:def_f}
f=f(w,c,t), \qquad f: I\times {\mathcal C}\times \mathbb{R}^+ \rightarrow \mathbb{R}^+
\end{equation}
where $w\in I,I=[-1,1]$ is the opinion variable, $c\in {\mathcal C}=\{0,1,2,\ldots,\cm\}$ is a discrete variable describing the number of connections and $t\in\RR^+$ denotes as usual the time variable. For each time $t\ge 0$ we can compute the following marginal density
\begin{equation}\label{eq:integration_wc}
\p(c,t)=\int_I f(w,c,t)dw, 
\end{equation}
which defines the evolution of the number of connections of the agents or equivalently the degree distribution of the network. 
In the sequel we assume that the total number of agents is conserved, namely 
\be
\sum_{c=0}^{\cm}\p(c,t)=1.
\ee
The overall opinion distribution is defined likewise as the following marginal density function
\begin{equation}
g(w,t)=\sum_{c=0}^{\cm} f(w,c,t).
\end{equation}
We express the evolution of the opinions by a binary interaction rule. From a microscopic point of view we suppose that the agents modify their opinion through binary interactions which depend on opinions and number of connections. If two agents with opinion and number of connections $(w,c)$ and $(w_*,c_*)$ meet, their post-interaction opinion is given by
\begin{equation}\begin{cases}\label{eq:binary}
w' &=w-\eta P(w,w_*;c,c_*)(w-w_*)+\xi D(w,c), \\
w_*' &=w_*-\eta P(w_*,w;c_*,c)(w_*-w)+\xi_* D(w_*,c_*),
\end{cases}\end{equation}
where $w,w_*\in I=[-1,1]$ denote the pre-interaction opinions and $w',w_*'$ the opinion after the exchange of information between the two agents. Note that, in the present setting the compromise function $P(\cdot,\cdot;\cdot,\cdot)$ depends both on the opinions and on the number of connections of each agent. In \eqref{eq:binary} the nonnegative parameter $\eta$ influences the compromise rate while $\xi,\xi_*$ are centered random variables with the same distribution $\Theta$ with finite variance $\varsigma^2$   and taking values on a Borel set $\mathcal{B}\subset\mathbb{R}$. The function $D(\cdot,\cdot) \geq 0$ describes the local relevance of the diffusion for a given opinion and number of connections. We will consider by now a general interaction potential such that $0\le P(w,w_*,c,c_*)\le 1$.

In absence of diffusion, $\xi,\xi_*\equiv 0$, from \eqref{eq:binary} we have 
\begin{equation}
|w'-w_*'| = |1-\eta(P(w,w_*;c,c_*)+P(w_*,w;c_*,c))||w-w_*|, 
\end{equation}
then the post-exchange distances between agents are still in the reference interval $[-1,1]$ if we consider $\eta\in (0,1/2)$ and $0\le P(w,w_*,c,c_*)\le 1$. In agreement with \cite{AHP,APZa,DMPW,T} we can state the following result which derives the conditions on the noise term to ensure that the post-interaction opinions do not leave the reference interval.
\begin{proposition}
If we assume that $0<P(w,w_*;c,c_*)\le 1$ and 
\[
|\xi|< d, \qquad |\xi_*|< d
\]
where $$d=\min_{(w,c)\in I\times {\mathcal C}}\Big\{\dfrac{(1-w)}{D(w,c)},D(w,c)\ne 0\Big\},$$ then the binary interaction rule \eqref{eq:binary} preserves the bounds being the post interaction opinions $w,w_*$ contained in $I=[-1,1]$.
\end{proposition}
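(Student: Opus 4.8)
The plan is to treat the post-interaction opinion $w'$ in \eqref{eq:binary} as the sum of a deterministic \emph{compromise} part and a stochastic \emph{diffusion} part, and to control each contribution separately. First I would rewrite the compromise contribution, abbreviating $P=P(w,w_*;c,c_*)$, as
\[
w-\eta P(w-w_*)=(1-\eta P)\,w+\eta P\,w_*.
\]
Since $\eta\in(0,1/2)$ and $0<P\le 1$, the coefficient $\eta P$ lies in $(0,1/2)\subset(0,1)$, so this is a genuine convex combination of $w$ and $w_*$. As both pre-interaction opinions belong to $[-1,1]$, the compromise part lies in $[\min(w,w_*),\max(w,w_*)]\subseteq[-1,1]$, independently of the noise.

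Next I would control the diffusion term $\xi D(w,c)$ by invoking the definition of $d$. For every admissible $(w,c)$ with $D(w,c)\ne 0$ one has $d\le (1-w)/D(w,c)$, hence $|\xi|\,D(w,c)<d\,D(w,c)\le 1-w$, and a symmetric estimate (measuring the distance to the left endpoint $-1$) is what I would use to keep $w'$ above $-1$; this is the point at which I would need $D$ to vanish at both boundaries so that the relevant ratios stay finite and $d>0$. The strategy is then to combine the two estimates: the compromise part already sits inside the interval, and the admissible noise is too small to carry $w'$ past either endpoint. Running the identical computation with the roles of $(w,c)$ and $(w_*,c_*)$ exchanged then yields $w_*'\in[-1,1]$, closing the argument for both agents.

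The step I expect to be most delicate is exactly the combination of the two parts near the endpoints. The compromise may itself have moved $w$ toward a boundary (whenever $w_*$ is closer to $\pm 1$ than $w$ is), so the room left for the noise must be measured against the \emph{displaced} position $(1-\eta P)w+\eta P w_*$ rather than against $w$ alone. The worst case is $w_*=1$ (respectively $w_*=-1$), where the compromise already consumes a fraction $\eta P$ of the available gap, leaving only $(1-\eta P)(1-w)$; here one must check carefully that $d\,D(w,c)$ does not exceed this remaining gap. This is precisely where the quantitative form of $d$ has to do its work, and where I would concentrate the effort, verifying the endpoint inequality uniformly over $(w,w_*)\in I\times I$ and $(c,c_*)\in{\mathcal C}\times{\mathcal C}$. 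The centering of $\xi,\xi_*$ plays no role in the bound-preservation statement itself and would only enter later, in the moment computations.
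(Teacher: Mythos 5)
Your decomposition into a convex combination plus a noise term is the standard starting point (the paper itself states this proposition without proof, deferring to the cited references), and the convexity argument for the deterministic part is correct. However, the step you yourself single out as delicate---checking that $d\,D(w,c)$ does not exceed the residual gap $(1-\eta P)(1-w)$ left after the compromise---is not carried out, and it is precisely the step that fails with $d$ as defined. The definition only gives $d\le (1-w)/D(w,c)$, hence $|\xi|\,D(w,c)<1-w$, but nothing forces $|\xi|\,D(w,c)\le (1-\eta P)(1-w)$. Concretely, take $D(w,c)=1-w^2$, for which $(1-w)/D(w,c)=1/(1+w)$ and $d=1/2$; with $\eta=0.4$, $P\equiv 1$, $w=0.9$, $w_*=1$ and $\xi=0.49<d$ one gets
\[
w'=0.9-0.4\,(0.9-1)+0.49\,(1-0.81)=0.94+0.0931=1.0331>1 .
\]
So the argument cannot be closed from the hypothesis $|\xi|<d$ alone: one needs the stronger condition $|\xi|<(1-\eta)d$ (this is the form in which the bound appears in the literature the paper refers to, e.g.\ \cite{APZa,T}), or equivalently the factor $1-\eta P\ge 1-\eta$ must be absorbed into the definition of $d$. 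A complete proof should establish the corrected inequality $|\xi|\,D(w,c)<(1-\eta P)(1-w)$ rather than defer it.

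A second gap concerns the lower bound. The stated $d$ only measures the distance to the right endpoint, so it gives no control of $w'\ge-1$; the ``symmetric estimate'' you invoke would require $d\le (1+w)/D(w,c)$ for all admissible $(w,c)$, which does not follow from the hypothesis. It happens to hold for the symmetric choice $D=1-w^2$ (where $(1+w)/D=1/(1-w)\ge 1/2=d$), but not in general; the clean fix is to define $d$ using $\min\big\{(1-w)/D,\,(1+w)/D\big\}$, i.e.\ $(1-|w|)/D(w,c)$. Your closing observation that the centering of $\xi,\xi_*$ is irrelevant to bound preservation is correct, and your remark that $D$ must vanish at the endpoints for $d$ to be positive is essentially right; but as it stands the proposal identifies the two decisive inequalities without proving them, and the first one is false under the stated hypotheses.
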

The evolution in time of the density function $f(w,c,t)$ is described by the following integro-differential equation of Boltzmann-type
\begin{equation}\label{eq:boltz_lin}
\dfrac{d}{d t}f(w,c,t)+\N[f(w,c,t)]=Q(f,f)(w,c,t),
\end{equation}
where $\N[\cdot]$ is an operator which is related to the evolution of the connections in the network and $Q(\cdot,\cdot)$ is the binary interaction operator defined as follows
\begin{equation}
Q(f,f) = \sum_{c_*=0}^{\cm}\int_{\mathcal{B}^2\times I} \left('B\dfrac{1}{J}f('w,c)f('w_*,c_*)-Bf(w,c)f(w_*,c_*)\right)dw_*d\xi d\xi_*,
\label{eq:Bo}
\end{equation}
where $('w,'w_*)$ are the pre-interaction opinions generated by the couple $(w,w_*)$ after the interaction. The term $J$ denotes the Jacobian of the transformation $(w,w_*)\rightarrow (w',w_*')$ and the kernels $'B,B$ define the binary interaction. Here and in the rest of the section, for notation simplicity, the explicit dependence from the time variable is omitted. 

We will consider interaction kernels of the following form 
\begin{equation}
B_{(w,w_*)\rightarrow (w',w_*')}=\lambda\Theta(\xi)\Theta(\xi_*)\chi(|w'|\le 1)\chi(|w_*'|\le 1),
\end{equation}
where $\lambda>0$ is a constant relaxation rate representing the interaction frequency. 

In order to write the collision operator $Q(\cdot,\cdot)$ in weak form we consider a test function $\psi(w)$ to get
\begin{equation}\begin{split}\label{eq:collisional_op}
 \int_I & Q(f,f)(w,c)\psi(w)dw =\\
 & \lambda \sum_{c_*=0}^{\cm}\left<\int_{I^2 } \left(\psi(w')-\psi(w)\right) f(w_*,c_*)f(w,c) dw dw_* \right>,
\end{split}\end{equation}
where the brackets $<\cdot>$ denotes the expectation with respect to the random variables $\xi,\xi_*$. Equation \eqref{eq:boltz_lin} assumes the following weak form
\begin{equation}\begin{split}\label{eq:boltz_weak}
 \dfrac{d}{dt}\int_I& f(w,c)\psi(w)dw+ \int_I \N[ f(w,c,t)]\psi(w)dw = \\
&\lambda \sum_{c_*=0}^{\cm}\left<\int_{I^2} \left(\psi(w')-\psi(w)\right)f(w_*,c_*)f(w,c)dwdw_*\right>.
\end{split}\end{equation}
An alternative form, obtained by symmetry is the following 
\begin{equation}\begin{split}\label{eq:boltz_weak2}
& \dfrac{d}{dt}\int_I f(w,c)\psi(w)dw+ \int_I \N[ f(w,c)]\psi(w)dw = \\
&\qquad\frac{\lambda}{2} \sum_{c_*=0}^{\cm}\left<\int_{I^2} \left(\psi(w')+\psi(w'_*)-\psi(w)-\psi(w_*)\right)f(w_*,c_*)f(w,c)dwdw_*
\right>.
\end{split}
\end{equation}

\subsection{Evolution of the network}\label{sec:main}

We introduced in the previous paragraph the operator $\N[\cdot]$ characterizing the evolution of the agents in the discrete space of connections. This, of course, corresponds to the evolution of the underlaying network of connections between the agents. Here we will specify the details of the model considered in the present paper, inspired by \cite{XZW}. 

The operator $\N[\cdot]$ is defined through a combination of preferential attachment and uniform processes describing the evolution of the connections of the agents by removal and adding links in the network. These processes are strictly related to the generation of stationary scale-free distributions \cite{BA}.

More precisely, for each $c=1,\ldots,\cm-1$ we define
\begin{equation}\begin{split}\label{eq:master2}
\N[f(w,c,t)] =& -\dfrac{2\Ur(f;w)}{\gamma+\beta}\left[(c+1+\beta)f(w,c+1,t)-(c+\beta)f(w,c,t)\right]\\
&-\dfrac{2\Ua(f;w)}{\gamma+\alpha}\left[(c-1+\alpha)f(w,c-1,t)-(c+\alpha)f(w,c,t)\right],
\end{split}\end{equation} 
where
$\gamma=\gamma(t)$ is the mean density of connectivity defined as 
\begin{equation}
\gamma(t) = \sum_{c=0}^{\cm}c\p(c,t),
\label{eq:gammad}
\end{equation}
$\alpha, \beta>0$ are attraction coefficients, and $\Ur(f;w)\geq 0$, $\Ua(f;w)\geq 0$ are characteristic rates of the removal and adding steps, respectively. The first term in \eqref{eq:master2} describes the net gain of $f(w,c,t)$ due to the connection removal between agents whereas the second term represents the net gain due to the connection adding process. The factor $2$ has been kept in evidence since connections are removed and created pairwise. 


At the boundary we have the following equations
\begin{equation}\label{eq:BD_master2}
\begin{split}
\N[f(w,0,t)] =&  -\dfrac{2\Ur(f;w)}{\gamma+\beta}(\beta+1)f(w,1,t)+\dfrac{2\Ua(f;w)}{\gamma+\alpha}\alpha f(w,0,t), \\
\N[f(w,\cm,t)] =& \dfrac{2\Ur(f;w)}{\gamma+\beta}(\cm+\beta) f(w,\cm,t)\\
& -\dfrac{2\Ua(f;w)}{\gamma+\alpha}(\cm-1+\alpha)f(w,\cm-1,t),
\end{split}
\end{equation}
which are derived from (\ref{eq:master2}) taking into account the fact that, in the dynamics of the network, connections cannot be removed from agents with $0$ connections and cannot be added to agents with $\cm$ connections.

\begin{remark}\label{rmk:CR}
If one defines the characteristic rates as
\be
\Ur(f;w)=U_r\frac{\gamma+\beta}{\gamma_f+\beta g(w,t)},\quad \Ua(f;w)=U_a\frac{\gamma+\alpha}{\gamma_f+\alpha g(w,t)}
\label{eq:rates}
\ee
where
\be
\gamma_f(w,t)=\sum_{c=0}^{\cm}c f(w,c,t),
\ee
and $U_a$, $U_r$ constants, the dynamics in (\ref{eq:master2}) correspond to a combination of a preferential attachment processes ($\alpha,\beta\approx 0$) and a uniform processes ($\alpha,\beta \gg 1$) for each agent with opinion $w$, with respect to the probability density of connections $f(w,c,t)/g(w,t)$.
\end{remark}

The evolution of the network of connections can be recovered taking $\psi(w)=1$ in the master equation \eqref{eq:boltz_weak}. From equation \eqref{eq:integration_wc} we have 
\begin{equation}\label{eq:Ldef2}
\dfrac{d}{dt}\p(c,t) + \int_I \N[f(w,c,t)]\,dw=0.
\end{equation}
From the above definition of the network operator $\N[\cdot]$ it follows that 
\begin{equation}
\dfrac{d}{dt}\sum_{c=0}^{\cm}\p(c,t)=0. 
\label{eq:tnc}
\end{equation}
Then for the collisional operator defined in \eqref{eq:collisional_op} and the choice of $\N[\cdot]$ in \eqref{eq:master2} the total number of agents is conserved. 

Let us take into account the evolution of the mean density of connectivity $\gamma$ defined in (\ref{eq:gammad}). We can prove that, for each $t\ge0$ 
\begin{equation}
\begin{split}
&\dfrac{d}{dt}\gamma(t)=-2\int_I\Ur(f;w)\frac{\gamma_f+\beta g(w,t)}{\gamma+\beta}dw+2\int_I\Ua(f;w)\frac{\gamma_f+\alpha g(w,t)}{\gamma+\alpha}dw\\
&\,\,\,\,\,+\dfrac{2\beta}{\gamma+\beta}\int_I \Ur(f;w) f(w,0,t)\,dw
-\dfrac{2(\cm+\alpha)}{\gamma+\alpha}\int_I \Ua(f;w) f(w,\cm,t)\,dw
\end{split}
\label{eq:mdc}
\end{equation}
Therefore $\gamma$ is not conserved in general. Asymptotically, conservation is recovered in the case $\beta=0$, if the characteristic rates are given by (\ref{eq:rates}) with $U_a=U_r$ or are constants with $V_a=V_r$, and for a sufficiently fast decay of the density function $f(w,\cm,t)$.

In Appendix \ref{app:mass}-\ref{app:mean_evo} we report the explicit computations of the conservation of the total number of connections (\ref{eq:tnc}) and of the evolution of the mean density of connectivity (\ref{eq:mdc}).

In the particular case where $\Ua$ and $\Ur$ are constants independent of $f$ and $w$ then the operator $\N[\cdot]$ is linear and will be denoted by $\L[\cdot]$. In this case, the evolution of the network of connections is independent from the opinion and we get the closed form  
\begin{equation}\label{eq:Ldef}
\dfrac{d}{dt}\p(c,t) + \L[\p(c,t)]=0,
\end{equation}
where 
\begin{equation}\begin{split}\label{eq:master}
\L[\p(c,t)] =& -\dfrac{2\Ur}{\gamma+\beta}\left[(c+1+\beta)\p(c+1,t)-(c+\beta)\p(c,t)\right]\\
&-\dfrac{2\Ua}{\gamma+\alpha}\left[(c-1+\alpha)\p(c-1,t)-(c+\alpha)\p(c,t)\right],
\end{split}\end{equation}
and at the boundary 
\begin{equation}\label{eq:BD_master}
\begin{split}
\L[\p(0,t)]& =  -\dfrac{2\Ur}{\gamma+\beta}(\beta+1)\p(1,t)+\dfrac{2\Ua}{\gamma+\alpha}\alpha \p(0,t), \\
\L[\p(\cm,t)]& = \dfrac{2\Ur}{\gamma+\beta}(\cm+\beta) \p(\cm,t)-\dfrac{2\Ua}{\gamma+\alpha}(\cm-1+\alpha)\p(\cm-1,t).
\end{split}
\end{equation}
Note that the dynamics in (\ref{eq:master}) corresponds again to a combination of preferential attachment processes ($\alpha,\beta\approx 0$) and uniform processes ($\alpha,\beta \gg 1$) with respect to the probability density of connections $\rho(c,t)$. 


Concerning the large time behavior of the network of connections, in the linear case with $\Ur=\Ua$, $\beta=0$ and now denoting by $\gamma$ the asymptotic value of the density of connectivity, it is possible to prove that  
\begin{proposition}
\label{pr:1}
For each $c\in {\mathcal C}$ the stationary solution to (\ref{eq:Ldef}) or equivalently 
\be\label{prop:st}
(c+1)\ps(c+1) = \dfrac{1}{\gamma+\alpha}\left[(c(2\gamma+\alpha)+\gamma\alpha)\ps(c)-\gamma(c-1+\alpha)\ps(c-1)\right]
\ee
is given by 
\be\label{prop:sol}
\ps(c) = \left(\dfrac{\gamma}{\gamma+\alpha}\right)^c \dfrac{1}{c!}\alpha(\alpha+1)\cdots (\alpha+c-1)\ps(0)
\ee
where
\be
\ps(0) = \left(\dfrac{\alpha}{\alpha+\gamma}\right)^{\alpha}.
\ee
\end{proposition}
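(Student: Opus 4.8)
The plan is to exploit the birth--death structure of the linear network operator. Setting $\beta=0$ and $\Ur=\Ua$ in \eqref{eq:master}, the stationary equation $\L[\ps(c)]=0$ is precisely the three--term recurrence \eqref{prop:st}. Rather than solving this second--order recurrence head on, I would first reduce it to a first--order detailed--balance relation, then integrate that relation as a telescoping product to obtain \eqref{prop:sol}, and finally fix the constant $\ps(0)$ by normalization. The only genuine obstacle is the reduction from the second--order to the first--order recurrence; once it is available, the remaining steps are elementary.

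To carry out the reduction I would read off from \eqref{eq:master} the adding (birth) rate and the removal (death) rate, namely $b(c)=\frac{2\Ur}{\gamma+\alpha}(c+\alpha)$ and $d(c)=\frac{2\Ur}{\gamma}c$, and define the net upward flux between levels $c$ and $c+1$,
\[
\Phi(c)=b(c)\,\ps(c)-d(c+1)\,\ps(c+1).
\]
A direct rearrangement of \eqref{eq:master} shows that $\L[\ps(c)]=\Phi(c)-\Phi(c-1)$ for every $c\in\mathcal C$, provided one adopts the conventions $\Phi(-1)=\Phi(\cm)=0$, which is exactly what the boundary relations \eqref{eq:BD_master} encode (no connection can be removed below $0$ nor added above $\cm$). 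Hence the stationary condition $\L[\ps]\equiv 0$ forces $\Phi(c)=\Phi(c-1)$, so $\Phi$ is constant in $c$, and the zero--flux boundary convention pins this constant to zero. Therefore $\Phi\equiv 0$, that is, the detailed--balance identity
\[
(c+1)\,\ps(c+1)=\frac{\gamma}{\gamma+\alpha}\,(c+\alpha)\,\ps(c)
\]
holds for all $c$. As a check, inserting this first--order relation at the indices $c$ and $c-1$ into \eqref{prop:st} returns \eqref{prop:st} identically, confirming that the two formulations are equivalent.

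Integrating the detailed--balance relation by telescoping from $\ps(0)$ gives
\[
\ps(c)=\ps(0)\prod_{j=0}^{c-1}\frac{\gamma\,(j+\alpha)}{(\gamma+\alpha)(j+1)}=\left(\frac{\gamma}{\gamma+\alpha}\right)^{c}\frac{\alpha(\alpha+1)\cdots(\alpha+c-1)}{c!}\,\ps(0),
\]
which is exactly \eqref{prop:sol}; the rising factorial in the numerator is the Pochhammer symbol $\Gamma(\alpha+c)/\Gamma(\alpha)$, so $\ps$ is a negative--binomial profile.

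It then remains to determine $\ps(0)$ from normalization. Passing to the limit $\cm\to\infty$ (consistent with reading $\gamma$ as the asymptotic connectivity) and summing \eqref{prop:sol} by the generalized binomial theorem $\sum_{c\ge 0}\frac{\alpha(\alpha+1)\cdots(\alpha+c-1)}{c!}\,x^{c}=(1-x)^{-\alpha}$ with $x=\gamma/(\gamma+\alpha)$, which is admissible since $0<x<1$ for $\alpha,\gamma>0$, yields $\sum_{c\ge 0}\ps(c)=\ps(0)\,(\alpha/(\gamma+\alpha))^{-\alpha}=1$, whence $\ps(0)=(\alpha/(\alpha+\gamma))^{\alpha}$ as claimed. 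I would finally note, as a consistency check not needed for the statement itself, that the mean $\sum_{c}c\,\ps(c)$ of this distribution equals $\gamma$, so the value of $\gamma$ entering the rates indeed reproduces itself at equilibrium.
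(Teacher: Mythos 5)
Your proof is correct, but it takes a genuinely different route from the paper's. The paper proves \eqref{prop:sol} by induction on $c$: it reads off $\ps(1)=\frac{\gamma}{\gamma+\alpha}\,\alpha\,\ps(0)$ from the boundary relation \eqref{eq:BD_master} at $c=0$, then substitutes the induction hypothesis for $\ps(c)$ and $\ps(c-1)$ into the three--term recurrence \eqref{prop:st} and simplifies to obtain the formula at $c+1$, finally checking the boundary relation at $c=\cm$ by inspection. You instead exploit the birth--death structure: you write $\L[\ps(c)]=\Phi(c)-\Phi(c-1)$ for the net upward flux $\Phi$, use the boundary conditions to set $\Phi(-1)=\Phi(\cm)=0$, conclude $\Phi\equiv 0$, and telescope the resulting first--order detailed--balance relation $(c+1)\ps(c+1)=\frac{\gamma}{\gamma+\alpha}(c+\alpha)\ps(c)$. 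I checked the flux identity against \eqref{eq:master}--\eqref{eq:BD_master} with $\beta=0$, $\Ur=\Ua$, and it holds, including at the endpoints. Your approach buys more: it is constructive (it derives the product form rather than verifying a guessed answer), it makes transparent why the stationary state is a negative--binomial profile, and it delivers the normalization $\ps(0)=(\alpha/(\alpha+\gamma))^{\alpha}$ via the generalized binomial series together with the self--consistency check $\sum_c c\,\ps(c)=\gamma$ --- neither of which the paper's appendix actually proves (the paper simply asserts the value of $\ps(0)$). The one point to flag explicitly is the same one you already acknowledge: the closed form for $\ps(0)$ and the mean--consistency identity hold exactly only in the limit $\cm\to\infty$; on the finite state space $\mathcal C$ the normalizing constant is the reciprocal of a truncated sum, so \eqref{prop:sol} with that value of $\ps(0)$ is exact as a solution of the recurrence but only asymptotically normalized. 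This caveat applies equally to the paper's statement, so it is not a gap in your argument relative to theirs.
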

Detailed computations are given in Appendix \ref{app:st}.

Further approximations are possible in the cases $\alpha\gg 1$ and $\alpha\approx 0$. For big values of $\alpha$ the preferential attachment process described by the master equation \eqref{eq:master} is destroyed and the network approaches to a random network, whose degree distribution is the Poisson distribution. In fact, in the limit $\alpha\rightarrow+\infty$ we have $(\alpha+\gamma)^c\approx \alpha(\alpha+1)\cdots(\alpha+c-1)$ and
\[
\ps(c) =\lim_{\alpha\rightarrow+\infty} \left(1+\dfrac{\gamma}{\alpha}\right)^{-\alpha}\gamma^c = \dfrac{e^{-c}}{c!}\gamma^c.
\]
In the second case, for $\gamma\ge1$ and small values of $\alpha$, the distribution can be correctly approximated with a truncated power-law with unitary exponent
\[
\ps(c) = \left(\dfrac{\alpha}{\gamma}\right)^{\alpha}\dfrac{\alpha}{c}.
\]

\begin{figure}
\centering
\includegraphics[scale=0.319]{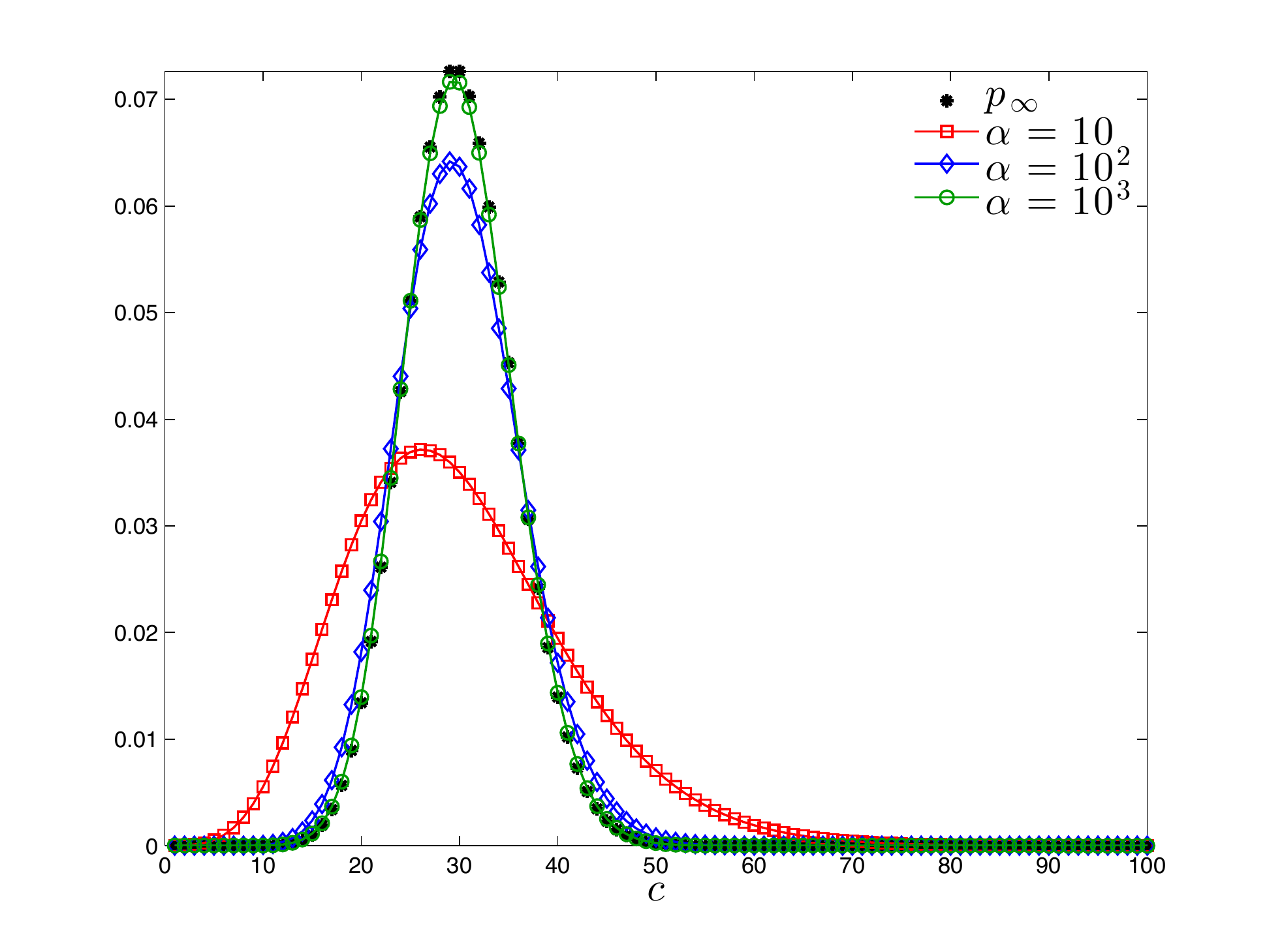}
\includegraphics[scale=0.319]{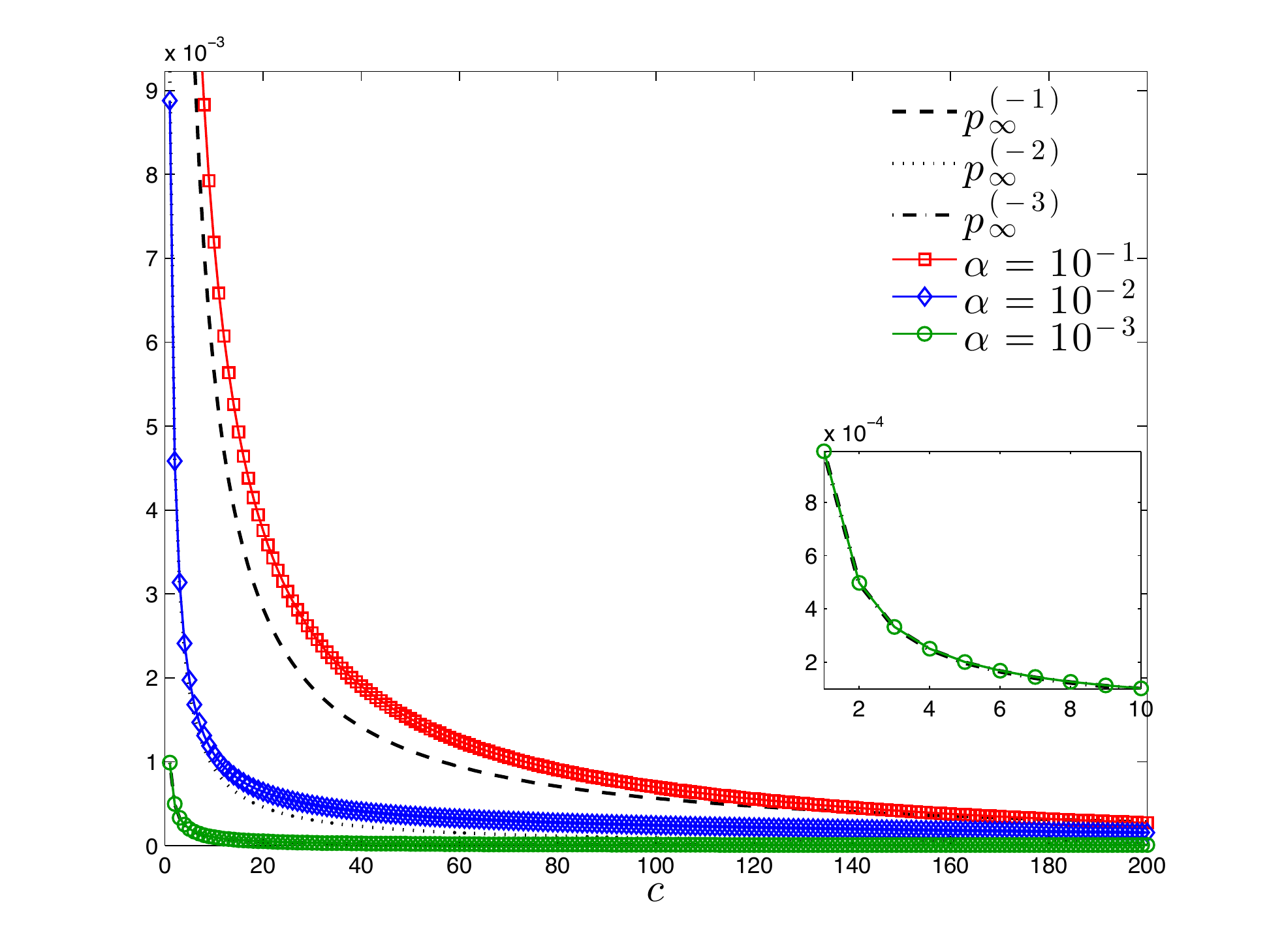}
\caption{Stationary states of \eqref{eq:Ldef} with relaxation coefficients  $\Ur=\Ua=1$, mean density of connectivity $\gamma=30$, $\cm=1500$ and several values of the attraction parameters $\alpha$, and having fixed $\beta = 0$. Left: convergence toward the Poisson distribution for big values of $\alpha$. Right: convergence toward a power-law distribution in the limit $\alpha\rightarrow 0$, we indicated with $p_{\infty}^{(-k)}, k=1,2,3$ the $\alpha-$dependent stationary solutions for $\alpha=10^{-1},10^{-2},10^{-3}$, respectively. 
}
\end{figure}

\subsection{Evolution of the moments}
\label{secmac}
In order to study the evolution of the mean opinion, i.e.
\[
m_w (c,t) =  \int_I w f(w,c,t)dw
\]
we consider $\psi(w)=w$ in \eqref{eq:boltz_weak2} 
\begin{equation}\begin{split}
 \dfrac{d}{dt}\int_I& wf(w,c,t)dw+ \int_I w N\left[f(w,c,t)\right]dw = \\
&\dfrac{\lambda}{2} \sum_{c_*=0}^{\cm}\left<\int_{I^2} \left(w'+w_*'-w-w_*\right)f(w_*,c_*,t)f(w,c,t)dwdw_*\right>.
\end{split}\end{equation}
We obtain 
\begin{equation*}\begin{split}
\dfrac{d}{dt}&m_w(c,t)+\int_I w N\left[f(w,c,t)\right]dw=\\
&\dfrac{\eta\lambda}{2}\sum_{c_*=0}^{\cm}\int_{I^2}(w-w_*)\left[P(w_*,w;c_*,c)-P(w,w_*;c,c_*)\right]f(w_*,c_*,t)f(w,c,t)dw_*dw.
\end{split}\end{equation*}
Of course, if the compromise function $P(\cdot,\cdot;\cdot,\cdot)$ is symmetric with respect to the pairs $(w,w_*)$ and $(c,c_*)$ we have conservation of the overall opinion on the network
\[
\frac{d}{dt}\sum_{c=0}^{\cm} m_w(c,t)=0.
\] 
In addition, if the network operator is linear the evolution of the mean opinion obeys the same closed differential equation of the network of connections 
\begin{equation}
\label{eq:mom}
\dfrac{d}{dt}m_w(c,t)+\L[m_w(c,t)]=0,
\end{equation}
and therefore all the conclusions of the previous section hold true also for the mean opinion on the network. 

More generally we will consider compromise functions $P(\cdot,\cdot;\cdot,\cdot)$ with the following form
\begin{equation}\label{eq:factorHK}
P(w,w_*;c,c_*)=H(w,w_*)K(c,c_*),
\end{equation}
where $0\le H(\cdot,\cdot)\le 1$ represents the positive compromise propensity and $0\le K(\cdot,\cdot)\le 1$ a function taking into account the influence of number connections in the opinion exchange process. Note that, in this case, even if we consider a symmetric compromise function $H$ and a linear network operator we have
\begin{equation}\begin{split}
\dfrac{d}{dt}&m_w(c,t)+\L[m_w(c,t)]=\\
& \dfrac{\eta\lambda}{2}\sum_{c_*=0}^{\cm} B(t,c,c_*)\left[K(c_*,c)-K(c,c_*)\right]\\
& \qquad\quad \int_{I^2}H(w_*,w)(w-w_*)f(w_*,c_*,t)f(w,c,t)dw_*dw,
\end{split}\end{equation}
and the evolution of the mean opinion cannot be expressed in closed form due to the influence of the different connections that the agents possess. This is a fundamental difference compared to classical kinetic models of opinion \cite{T}. 

In the case of the second moment of the opinion $\phi(w)=w^2$, if we assume a symmetric function $P$, by denoting
\[
E_w (c,t) =  \int_I w^2 f(w,c,t)dw
\]
we get 
\begin{equation}\begin{split}
\dfrac{d}{dt}&E_w(c,t)+\int_I w^2 \N[f(w,c,t)]\,dw=\\
&{\eta\lambda}\sum_{c_*=0}^{\cm}\int_{I^2}P(w_*,w;c_*,c)^2(w-w_*)^2 f(w_*,c_*,t)f(w,c,t)dw_*dw\\
&+\lambda\varsigma^2\int_I D^2(c,w)f(w,c,t) dw,
\end{split}\end{equation}
which, in the case of a linear operator $\L[\cdot]$ with $P=1$ and in absence of noise $D=0$, simplifies to
\begin{equation}
\label{eq:ene}\begin{split}
\dfrac{d}{dt}&E_w(c,t)+\L[E_w(c,t)]=\\
&{\eta\lambda}\left(E_w(c,t)+\p(c,t)\sum_{c_*=0}^{\cm}E_w(c_*,t)-2m_w(c,t)\sum_{c_*=0}^{\cm}m_w(c_*,t)\right).
\end{split}\end{equation}
Equation (\ref{eq:ene}) together with (\ref{eq:Ldef}) and (\ref{eq:mom}) form a closed system for the evaluation of the second order moment of the opinion.

\section{Fokker-Planck modelling}
\label{sec:FP}
In general it is difficult to obtain analytic results on the large time behavior of the opinion for the kinetic equation introduced in the previous section. A step toward the simplification of the analysis is the derivation of asymptotic states of the Boltzmann-type equation derived from a simplified Fokker-Planck-type models \cite{PTa}. Here we recall briefly the approach usually referred to as the quasi-invariant opinion limit \cite{APZa,BrTo,T}. 

\subsection{Derivation of the model}
The idea is to rescale the interaction frequency $\lambda$, the interaction propensity $\eta$ and the diffusion variance $\varsigma^2$ at the same time, in order to to maintain asymptotically the memory of the microscopic interactions. Let su introduce the scaling parameter $\varepsilon>0$ and consider the scaling 
\begin{equation}\label{eq:scaling}
\eta=\varepsilon, \qquad \lambda=\dfrac{1}{\varepsilon}, \qquad \varsigma^2=\varepsilon\sigma^2.
\end{equation}
The above scaling corresponds to the case where the interaction kernel concentrates on binary interactions producing very small changes in the agents' opinion but at the same time the number of interactions becomes very large. From a modelling point of view we require that the scaling \eqref{eq:scaling} preserves the macroscopic properties of the kinetic system in the limit $\varepsilon\rightarrow 0$, namely the evolution of the mean and the variance of the opinion derived in Section \ref{secmac}. 

The scaled equation \eqref{eq:boltz_weak} reads 
\begin{equation}\begin{split}\label{eq:scaled1}
\dfrac{d}{dt}\int_{I}&f(w,c,t)\psi(w)dw+\int_I N\left[f(w,c,t)\right]\psi(w)\,dw = \\
& \dfrac{1}{\varepsilon}\sum_{c_*=0}^{\cm}\left< \int_{I^2}(\psi(w')-\psi(w))f(w_*,c_*,t)f(w,c,t)dwdw_*\right>,
\end{split}\end{equation}
{with scaled binary interactions given by 
\begin{equation}
w' -w= \varepsilon P(w,w_*;c,c_*)(w_*-w)+\xi_{\varepsilon}D(w)+O(\varepsilon^2),
\end{equation}
where $\xi_{\epsilon}$ is a centered random variable with variance $\varepsilon \sigma^2$.}
Since as $\varepsilon\to 0$ we have $w'\to w$ we can consider the Taylor expansion of $\psi$ around $w$ to get
\begin{equation}
\psi(w')-\psi(w)=(w'-w)\psi'(w)+\dfrac{1}{2}(w'-w)^2\psi''(\bar w),
\end{equation}
where for some $\theta\in [0,1]$
\[
\bar w=\theta w+(1-\theta)w.
\]
{Inserting this expansion in the binary interaction term of \eqref{eq:scaled1} we obtain
\begin{equation}\begin{split}
\dfrac{1}{\varepsilon}\sum_{c_*=0}^{\cm}\Bigg< \int_{I^2}& (w'-w)\psi'(w)+\dfrac{1}{2}(w'-w)^2\psi''( w) \\
& f(w_*,c_*,t)f(w,c,t)dwdw_*\Bigg> + R(\varepsilon),
\end{split}\end{equation}
where $R(\varepsilon)$ indicates the remainder, given by
\begin{equation}\label{eq:rest}
R(\varepsilon) = \dfrac{1}{2\varepsilon}\sum_{c_*=0}^{\cm}\Bigg< \int_{I^2} (w'-w)^2(\psi''( \bar w)-\psi''( w))f(w_*,c_*,t)f(w,c,t)dwdw_* \Bigg>.
\end{equation}
Therefore, the scaled binary interaction term reads 
\begin{equation}\begin{split}
\sum_{c_*=0}^{\cm}  \int_{I^2}& \Bigg[P(w,w_*;c,c_*)(w_*-w)\psi’(w) \\
&+\dfrac{\sigma^2}{2}D(w,c)^2 \psi''( w)\Bigg] f(w_*,c_*,t)f(w,c,t)dwdw_*  +R(\varepsilon)+O(\varepsilon).
\end{split}\end{equation}
By similar arguments of \cite{T} it can be shown rigorously that $R(\varepsilon)$ in \eqref{eq:rest} decays to zero in the limit $\varepsilon\rightarrow 0$.}
Thus, as $\varepsilon\rightarrow 0$ we recover
\begin{equation}\begin{split}\label{eq:scaled2}
\dfrac{d}{dt}\int_{I}&f(w,c,t)\psi(w)dw+\int_I N\left[f(w,c,t)\right]\psi(w) dw = \\
&\sum_{c_*=0}^{\cm} \left[\int_{I^2}P(w,w_*;c,c_*)(w_*-w)\psi'(w)f(w_*,c_*,t)f(w,c,t)dw_*dw \right. \\
& \left.+\dfrac{\sigma^2}{2}\int_I D(w,c)^2\psi^{\prime\prime}(w)f(w,c,t)dw\right].
\end{split}\end{equation}
Integrating backwards by parts equation \eqref{eq:scaled2} we obtain the following Fokker-Planck differential equation for the evolution of the opinions' distribution through the evolving network 
\begin{equation}\begin{split}\label{eq:FP}
\dfrac{\partial}{\partial t}f(w,c,t)+N\left[f(w,c,t)\right]=\dfrac{\partial }{\partial w}\mathcal{P}[f]f(w,c,t)+\dfrac{\sigma^2}{2}\dfrac{\partial^2}{\partial w^2}(D(w,c)^2 f(w,c,t))
\end{split}\end{equation}
where
\begin{equation}\label{eq:K}
\mathcal{P}[f](w,c,t) = \sum_{c_*=0}^{\cm}\int_I P(w,w_*;c,c_*)(w_*-w)f(w_*,c_*,t)dw_*.
\end{equation}

\subsection{Stationary solutions}
In this section we will show how in some cases it is possible to compute explicitly steady state solutions of the Fokker-Planck system \eqref{eq:FP}. We restrict to linear operators $\L[\cdot]$ and asymptotic solutions of the following form 
\begin{equation}\label{eq:steady}
f_{\infty}(w,c)=g_{\infty}(w)\ps(c),
\end{equation} 
where $\ps(c)$ is the steady state distribution of the connections (see Proposition \ref{pr:1}) and 
\begin{equation}
\int_I f_{\infty}(w,c)dw=\ps(c), \qquad \qquad \sum_{c=0}^{\cm}f_{\infty}(w,c)=g_{\infty}(w).
\end{equation}
From the definition of the linear operator $\L[\cdot]$ we have $\L[\ps(c)]=0$, so stationary solutions of type \eqref{eq:steady} satisfy the following equation
\begin{equation}
\label{eq:steadye}
\dfrac{\partial }{\partial w}\mathcal{P}[f_{\infty}]f_{\infty}(w,c)+\dfrac{\sigma^2}{2}\dfrac{\partial^2}{\partial w^2}(D(w,c)^2 f_{\infty}(w,c))=0.
\end{equation}
Under some simplifications we can analytically solve equation \eqref{eq:steadye} as it has been shown in \cite{APZa,T}. If we assume \eqref{eq:factorHK}, i.e. $P(w,w_*;c,c_*)=H(w,w_*)K(c,c_*)$, the operator $\mathcal{P}[f_\infty]$ can be written as follows 
\begin{equation}\label{eq:K2}
\begin{split}
\mathcal{P}[f_\infty](w,c)& =  \left(\sum_{c_*=0}^{\cm}{K}(c,c_*)\ps(c_*)\right)\left(\int_I H(w,w_*)(w_*-w)g_\infty(w_*)dw_*\right)\\
&=:\mathcal{K}[\ps](c)\mathcal{H}[g_\infty](w),
\end{split}
\end{equation}
and if we further assume that $K(c,c_*)={\bar K}(c_*)$ is independent of $c$, and $H(w,w_*)={\bar H}(w)$ independent of $w_*$
we have 
\[
\mathcal{K}[\ps] = \sum_{c_*=0}^{\cm}{\bar K}(c_*)\ps(c_*) =: \kappa,\quad \mathcal{H}[g_\infty]=\bar{H}(w)\left( w-\bar m_w\right),
\] 
where $\bar m_w=\sum_{c=0}^{\cm} m_w(c,t)$.

Finally, taking $D(w,c)=D(w)$ independent of $c$, equation  \eqref{eq:steadye} reads
\begin{equation}\label{eq:steady2}
\left(\kappa\dfrac{\partial }{\partial w}\bar{H}(w)\left( w-\bar m_w\right) g_{\infty}(w)+\dfrac{\sigma^2}{2}\dfrac{\partial^2 }{\partial w^2}D(w)^2g_{\infty}(w)\right)\ps(c)=0.
\end{equation}
Therefore, on the support of $\ps(c)$, 
stationary solutions can be derived from the following equation
\begin{equation}\label{eq:st_g3}
\kappa\bar{H}(w)\left(w-\bar m_w\right)g_{\infty}(w)+\dfrac{\sigma^2}{2}\dfrac{\partial }{\partial w}D(w)^2g_{\infty}(w)=0,
\end{equation}
which corresponds to the solution of the following ordinary differential equation
\begin{equation}\label{eq:st_g4}
\dfrac{d g_{\infty}}{d w}=2\left(\dfrac{\kappa}{\sigma^2}\frac{\bar{H}(w-\bar m_w)}{D^2}-\frac{D'}{D}\right)g_{\infty},
\end{equation}
thus
\begin{equation}
g_{\infty}(w)= \frac{C_0}{D(w)^2} \exp\left\{\dfrac{2\kappa}{\sigma^2}\int^w\frac{\bar{H}(v)}{D(v)^2}(\bar m_w-v)\,dv\right\},
\end{equation}
where the constant $C_0$ is chosen such that the total mass of $g_{\infty}$ is equal to one. 

Some explicit examples are give below.
\begin{enumerate}
 \item In the case $H\equiv 1$ and $D(w)=1-w^2$ the steady state solution $g_{\infty}$ is given by 
\begin{equation}\label{eq:stat_1}
g_{\infty}(w)= C_0(1+w)^{-2+\bar m_w\kappa/\sigma^2}(1-w)^{-2-m_w\kappa/\sigma^2}\exp\Big\{-\dfrac{\kappa(1-\bar m_w w)}{\sigma^2(1-w^2)}\Big\},
\end{equation}
\item For $H(w,w_*)=1-w^2$ and $D(w)=1-w^2$ the steady state solution $g_{\infty}$ is given by 
\begin{equation}\label{eq:stat_2}
g_{\infty}(w)=C_0(1-w)^{-2+(1-\bar m_w){\kappa}/{\sigma^2}}(1+w)^{-2+(1+\bar m_w){\kappa}/{\sigma^2}},
\end{equation}
\end{enumerate}
In Figure \ref{Fig:stat1} as an example we report the stationary solution $f_\infty(w,c) = g_\infty(w)\rho_\infty(c)$, where $ g_\infty(w)$ is given by \eqref{eq:stat_1} with $\kappa =1$, $m_w = 0$, $\sigma^2 = 0.05$ and $p_\infty(c)$ defined by \eqref{prop:sol}, with $\Ur=\Ua=1$, $\gamma = 30$ and $\alpha = 10$ on the left and $\alpha = 0.01$ on the right.

\begin{figure}[t]
\centering
\includegraphics[scale= 0.3]{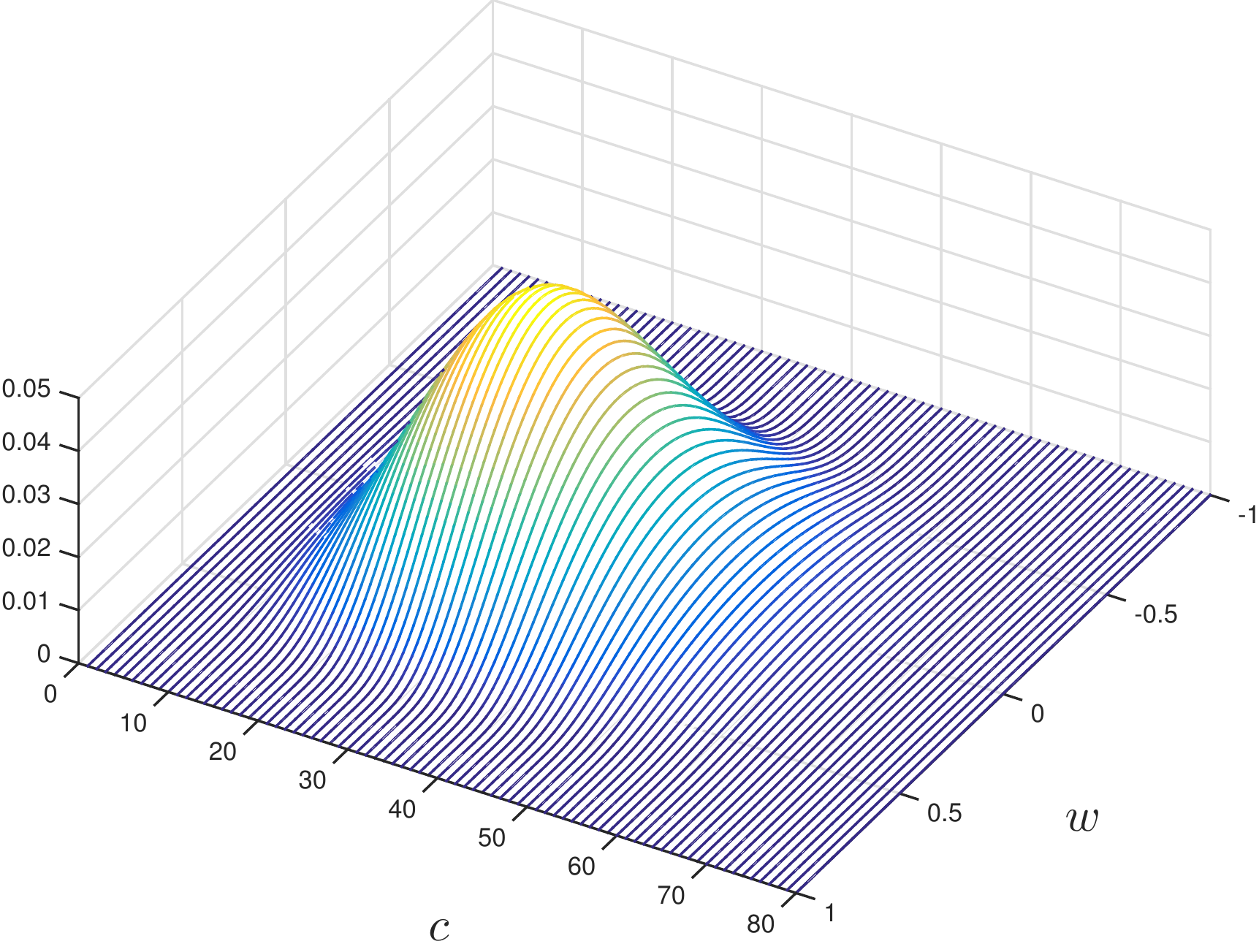}
\hskip +1cm
\includegraphics[scale= 0.3]{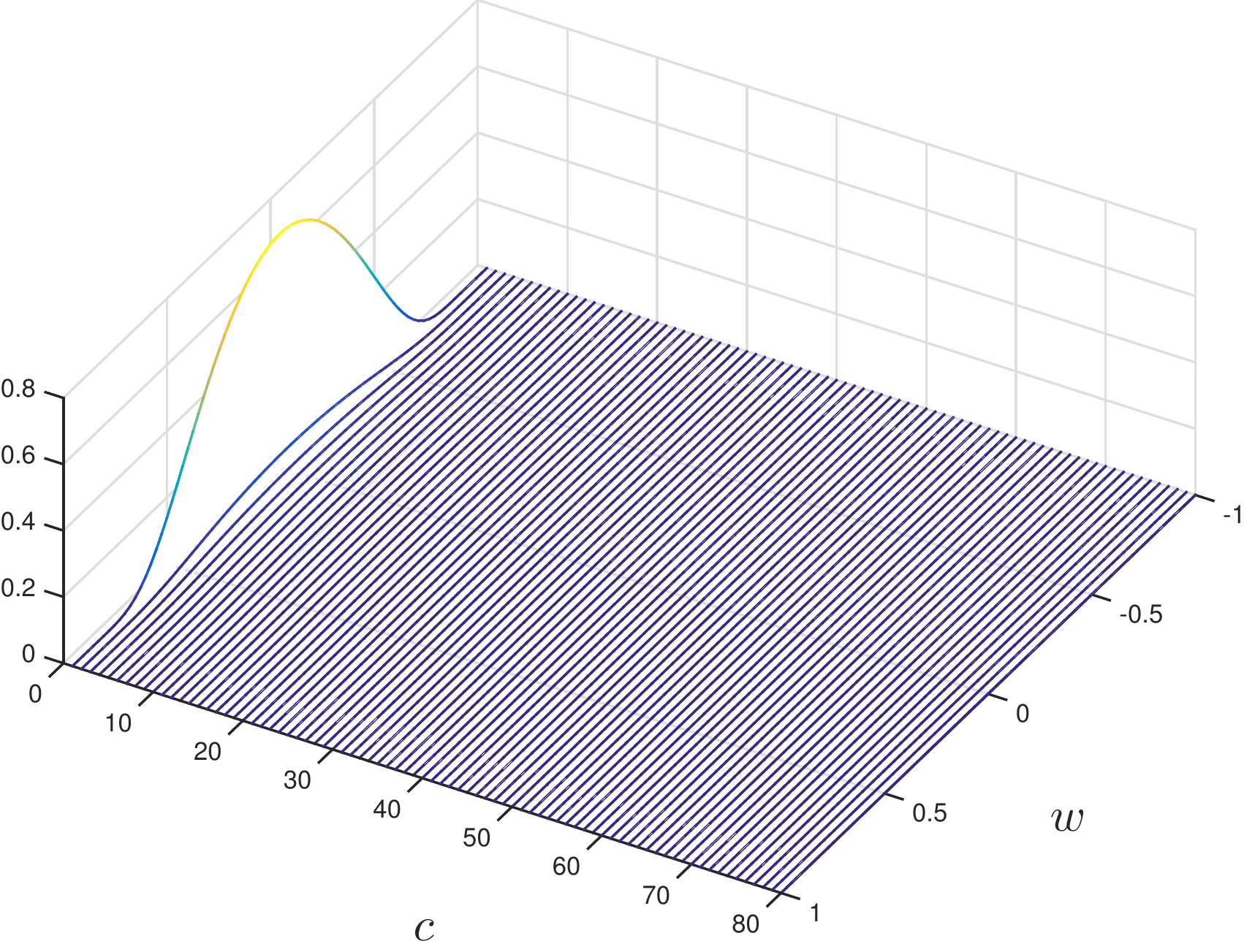}
\caption{Stationary solutions of type $f_\infty(w,c) = g_\infty(w)p_\infty(c)$, where $ g_\infty(w)$ is given by \eqref{eq:stat_1} with $\kappa =1$, $m_w = 0$, $\sigma^2 = 0.05$ and $p_\infty(c)$ defined by \eqref{prop:sol}, with $\Ur=\Ua=1$, and $\alpha = 10$ on the left and $\alpha = 0.1$ on the right.
 }\label{Fig:stat1}
\end{figure}

\section{Numerical methods}
In this section we consider the development of numerical methods for the kinetic models studied in the previous sections. First we consider direct simulation Monte Carlo methods for the Boltzmann model (\ref{eq:Bo}) introduced in Section \ref{sec:model}. Here the major difficulty is to consider a probabilistic interpretation of the dynamics induced by the network operator $\N[\cdot]$, whereas the opinion interaction follows the standard binary sampling approach (see \cite{PTa} for details). Next we consider the derivation of numerical schemes for the Fokker-Planck model (\ref{eq:FP}) derived in Section \ref{sec:FP}. In particular we will focus on the construction of finite-difference methods which are capable to describe correctly the large time behavior of the model. To this aim we will consider a nonlinear version of the Chang-Cooper type discretization which has the nice feature of preserving the steady states and the non negativity of the numerical solution \cite{CC, LLPS}. 
\subsection{Direct simulation Monte Carlo}
One of the most common approaches to solve Boltzmann-type equations is based Monte Carlo methods. Let us consider the initial value problem given by equation  \eqref{eq:boltz_lin} with initial condition $f(w,c,t=0)=f_0(w,c)$, the solution at time $t^n=n\cdot\Delta t^n, n\ge1$ is obtained as a composition of the solutions of the following problems: we first integrate the network term for all $c\in\mathcal{C}$ along the time interval $[t^{n},t^{n+1}]$
\begin{equation}\begin{cases}\vspace{0.5em}
\dfrac{d}{dt}\tilde f(w,c,t) + \N[\tilde f(w,c,t)]=0, \\
\tilde f(w,c,0)=f_0(w,c)
\end{cases}\end{equation}
then we solve the interaction step 
\begin{equation}\begin{cases}\vspace{0.5em}\label{eq:Coll}
\dfrac{d}{dt}f(w,c,t) = Q(f,f)(w,c,t), \\
f(w,c,0)  = \tilde f(w,c,t^n).
\end{cases}\end{equation}
The described process may be iterated in order to obtain the numerical solution of the initial equation at each time step. At variance with standard Monte Carlo methods for opinion dynamics, see for example \cite{PTa}, here we face the additional difficulty of the network evolution. In the sequel we describe the details of the Monte Carlo method for the network evolution in the simplified case $\N[\cdot]=\L[\cdot]$.

Let $f^n = f(w,c,t^n)$ the empirical density function for the density of agents at time $t^n$ with opinion $w\in[-1,1]$ and connections $c\in\mathcal{C}$.  For a any given opinion $w$ the solution of the transport step is given for each $c>0$ and $c<\cm$ by
\begin{equation}\label{eq:MCmaster}
\begin{split}
f^{n+1}(w,c) =&\left(1-\Delta t\dfrac{{\Ur}(c+\beta)}{\gamma^n+\beta}-\Delta t\dfrac{ {\Ua}(c+\alpha)}{\gamma^n+\alpha}\right)f^n(w,c)\\
&+\Delta t  \dfrac{{\Ur}(c+\beta)}{\gamma^n+\beta}f^n(w,c-1)+\Delta t\dfrac{ {\Ua}(c+\alpha)}{\gamma^n+\alpha}f^n(w,c+1),
\end{split}\end{equation}
with boundary conditions 
\begin{equation}\begin{split}
f^n(w,0) & = \left(1-\Delta t\dfrac{ {\Ua}(c+\alpha)}{\gamma^n+\alpha}\right)f^n(w,0)+\Delta t\dfrac{ {\Ua}(c+\alpha)}{\gamma^n+\alpha}f^n(w,1), \\
f^n(w,\cm) & = \left(1-\Delta t\dfrac{{\Ur}(c+\beta)}{\gamma^n+\beta}\right)f^n(w,\cm)+\Delta t  \dfrac{{\Ur}(\cm+\beta)}{\gamma^n+\beta}f^n(w,\cm-1),
\end{split}\end{equation}
and temporal discretization such that 
\begin{equation}\label{eq:timeL}
\Delta t\le \min \left\{ \dfrac{\gamma^n+\beta}{{\Ur}(\cm+\beta)},\dfrac{\gamma^n+\alpha}{{\Ua}(\cm+\alpha)} \right\}.
\end{equation}
An algorithm to simulate the above equation reads as follows
\begin{alg}[]\label{MCmaster}~
  \begin{enumerate}
  \item Sample $(w^0_i,c^0_i)$, with $i=1,\ldots,N_s$, from the distribution $f^{0}(w,c)$.
  \item \texttt{for} $n=0$ \texttt{to} $n_{tot}-1$  
  \begin{enumerate}
  \item Compute   $\gamma^n =\frac{1}{N_s} \sum_{j=1}^{N_s}c^n_j$;
  \item fix $\Delta t$ such that condition \eqref{eq:timeL} is satisfied.
   \item \texttt{for} $k=1$ \texttt{to} $N_s$  
 	 \begin{enumerate}
	 \item compute the  following probabilities rates 
	 \[ p_k^{(a)} =\frac{\Delta t V_a(c_k^n+\alpha)}{\gamma^n+\alpha},\qquad p_k^{(r)} = \frac{\Delta t V_r(c_k^n+\beta)}{\gamma^n+\beta},\]
	 \item set $c^*_k =c_k^n$.
	 \item \texttt{if} $0 \leq c_k^*\leq \cm-1$,\\ \quad with probability $p_k^{(a)}$ add a connection: $c_k^* = c_k^* +1$; 
	  \item  \texttt{if} $1 \leq c_k^*\leq \cm$,\\ \quad  with probability $p_k^{(r)}$ remove a connection: $c_k^* = c_k^* -1$;
	 \end{enumerate}
	 \item[]\texttt{end for}
  \item set $c^{n+1}_i  = c^*_i$, for all $i= 1,\ldots, N_s$.
\end{enumerate}
\item[]\texttt{end for}
  \end{enumerate}
  \label{ANMCS}
\end{alg}\medskip

The collision step may be solved through binary interaction algorithm \cite{AP, PTa}, where the basic idea is to solve the binary exchange of information described by \eqref{eq:binary}, under the quasi-invariant opinion scaling \eqref{eq:scaling}. 

The time-discrete scheme reads
\begin{align}\label{eq:MCBoltz}
f^{n+1}(w,c) = \left(1- \frac{\Delta t}{\varepsilon} \right)f^n(w,c) + \frac{\Delta t }{\varepsilon} {Q_\varepsilon^{+}(f^n,f^n)}(w,c),
\end{align}
where we have made explicit the dependence of $Q(f,f)$ on the frequency of interactions $1/\varepsilon$ and with $Q^{+}_\varepsilon(f^n,f^n)$ we denoted the gain part, namely it accounts the density of opinions gained at position $w$ after the binary interaction  \eqref{eq:binary}.
The collisional step \eqref{eq:MCBoltz} is a convex combinations of probability density under the time step constrain $\Delta t\leq \varepsilon$, which has to be coupled with \eqref{eq:timeL}. For further details on the algorithm we refer to \cite{AP, PTa}.

In Figure \ref{Fig:stat2} we show the two stationary states, already presented in Figure \ref{Fig:stat1}, computed through the Monte Carlo procedure just described, where we use $N_s = 2\times10^4$ samples to reconstruct the density and scaling parameter $\varepsilon = 0.01$ and $\Delta t = \varepsilon$.
\begin{figure}[t]
\centering
\includegraphics[scale= 0.3]{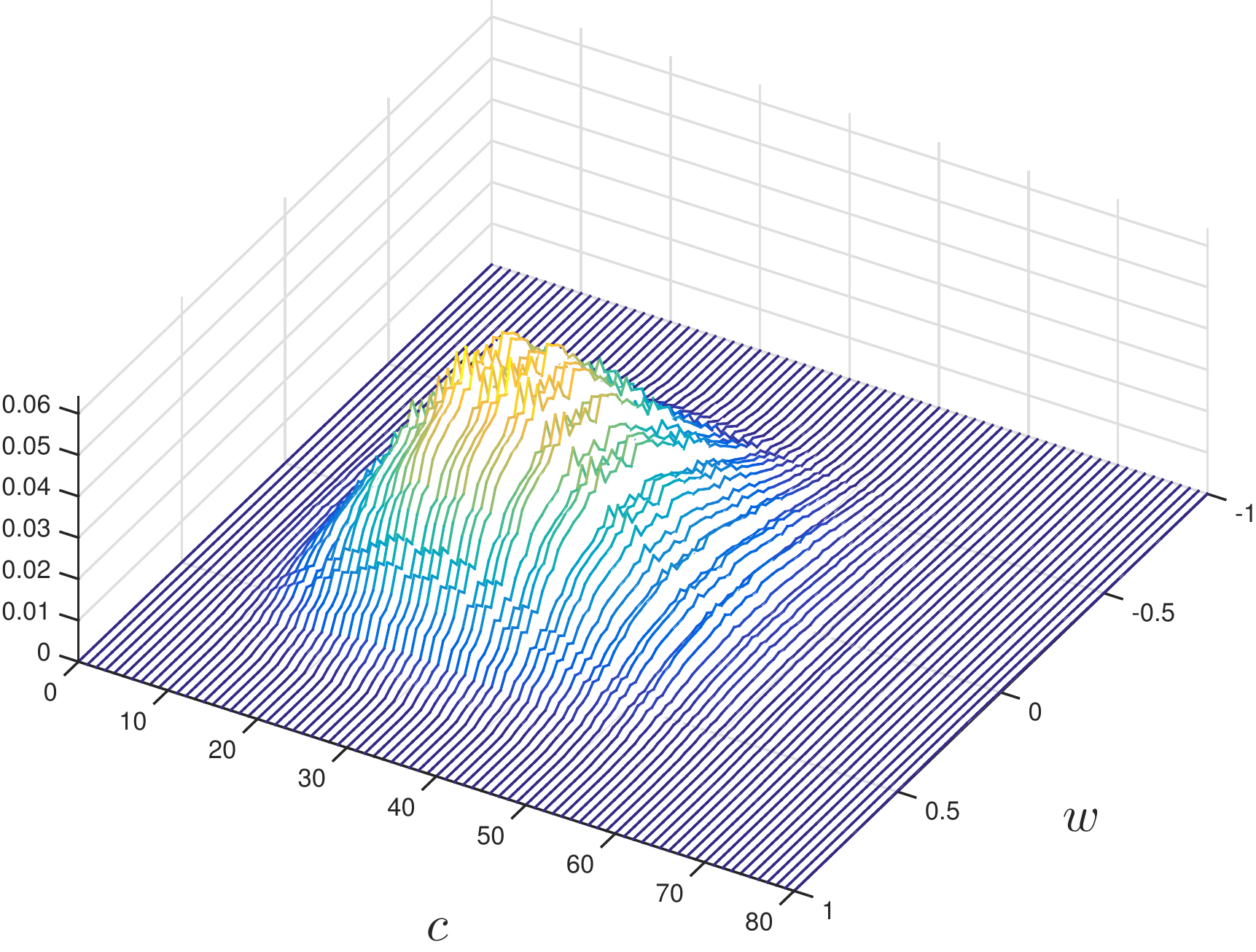}
\hskip +1cm
\includegraphics[scale= 0.3]{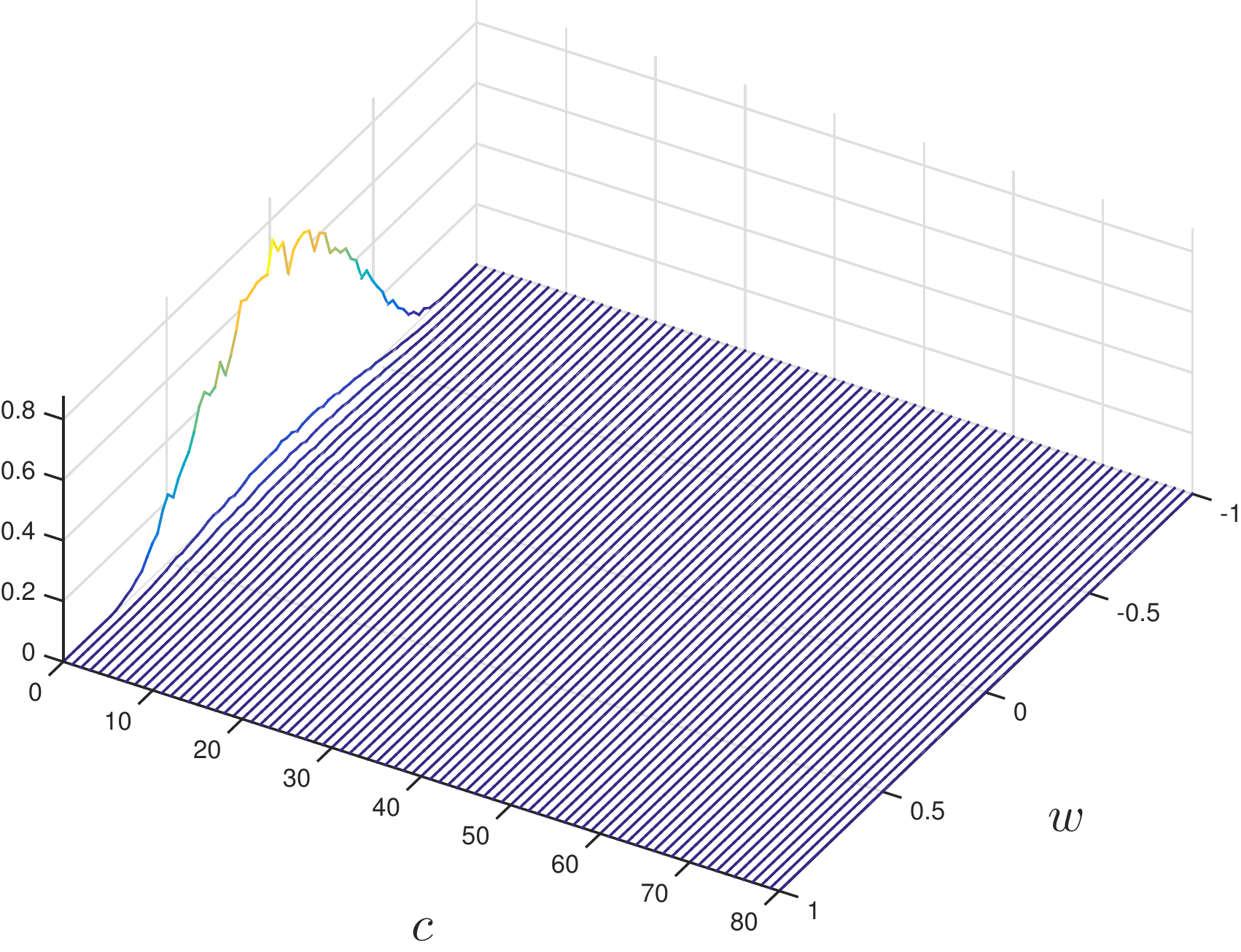}
\caption{ Stationary solutions captured via Monte Carlo simulations, with $N_s=2\times10^4$ samples. Parameters of the model are chosen as follows $\sigma^2 = 0.05$, $\Ur=\Ua=1$, $\beta = 0$, $\alpha = 10$ on the right hand side and $\alpha =0.1$ on the left hand side.
 }\label{Fig:stat2}
\end{figure}

\subsection{Chang-Cooper type numerical schemes}
We consider the Fokker-Planck system (\ref{eq:FP}) that we will rewrite in the form
\begin{equation}\begin{split}\label{eq:FP2}
\dfrac{\partial}{\partial t}f(w,c,t)+N\left[f(w,c,t)\right]=\dfrac{\partial }{\partial w}\mathcal{F}[f]
\end{split}\end{equation}
where
\begin{equation}\label{eq:F}
\mathcal{F}[f] = \left(\mathcal{P}[f] + \sigma^2 D'(w,c)D(w,c)\right)f(w,c,t)+\dfrac{\sigma^2}{2}D(w,c)^2 \dfrac{\partial}{\partial w}f(w,c,t),
\end{equation}
and $\mathcal P[f]$ is given by (\ref{eq:K}). 

The above equation is complemented with the initial data $f(w,c,0)=f_0(w,c)$ and considered in the domain $(w,c)\in I\times{\mathcal C}$ with zero flux boundary condition on $w$. Note that in the variable $c$ the equation is in discrete form and therefore the discretization we will consider acts only on the opinion variable $w$.

Let us introduce a uniform grid $w_{i}=-1+i\Delta w$, $i=0,\ldots,N$ with $\Delta w = 2/N$, we denote by $w_{i \pm 1/2}=w_i \pm \Delta w/2$ and define
\[
f_{i}(c,t)=\frac{1}{\Delta w}\int_{w_{i+1/2}}^{w_{i-1/2}} f(w,c,t)\,dw.
\]
Integrating equation (\ref{eq:FP2}) yields
\begin{equation}\begin{split}\label{eq:FPd}
\dfrac{\partial}{\partial t}f_{i}(c,t)+N\left[f_{i}(c,t)\right]=\frac{\mathcal{F}_{i+1/2}[f]-\mathcal{F}_{i-1/2} [f]}{\Delta w},
\end{split}\end{equation} 
where $\mathcal{F}_{i}[f]$ is the flux function characterizing the numerical discretization.

We assume a flux function as a combination of upwind and centered discretization as in the classical Chang-Cooper flux
\be
\begin{split}
\label{eq:flux}
\mathcal{F}_{i+1/2}[f]=&\left((1-\delta_{i+1/2})(\mathcal{P}[f_{i+1/2}] + \sigma^2 D'_{i+1/2}D_{i+1/2})+\frac{\sigma^2}{2\Delta w}D^2_{i+1/2} \right)f_{i+1}\\
&+\left(\delta_{i+1/2} (\mathcal{P}[f_{i+1/2}] + \sigma^2 D'_{i+1/2}D_{i+1/2})-\frac{\sigma^2}{2\Delta w} D^2_{i+1/2} \right)f_{i},
\end{split}
\ee  
where $D_{i+1/2}=D(w_{i+1/2},c)$ and $D'_{i+1/2}=D'(w_{i+1/2},c)$.

The weights $\delta_{i+1/2}$ have to be chosen in such a way that a steady state solution is preserved. Moreover, as it is shown in Appendix \ref{app:B}, this choice permits also to preserve nonnegativity of the numerical density.

Preservation of the steady states corresponds to assume that the numerical flux vanishes when $f$ is at the steady state $f^{\infty}$. Imposing the numerical flux equal to zero from (\ref{eq:flux}) we get
\be
\label{eq:cc}
\frac{f_{i+1}}{f_{i}}=\displaystyle  \frac{-\delta_{i+1/2} (\mathcal{P}[f_{i+1/2}] + \sigma^2 D'_{i+1/2}D_{i+1/2})+\frac{\sigma^2}{2\Delta w} D^2_{i+1/2}}{(1-\delta_{i+1/2})(\mathcal{P}[f_{i+1/2}] + \sigma^2 D'_{i+1/2} D_{i+1/2})+\frac{\sigma^2}{2\Delta w}D^2_{i+1/2}}.
\ee
Solving with respect to $\delta_{i+1/2}$ yields
\be
\delta_{i+1/2} = \frac{{\sigma^2} D^2_{i+1/2}}{2\Delta w(\mathcal{P}[f_{i+1/2}]+\sigma^2 D'_{i+1/2}D_{i+1/2})}+\frac{1}{1-f_{i}/f_{i+1}}.
\ee
On the other hand the same computation directly on the flux (\ref{eq:F}) gives the differential equation
\be
\frac{\sigma^2 D(w,c)^2}{2}\dfrac{\partial}{\partial w}f(w,c,t)=-\left(\mathcal{P}[f] + \sigma^2 D'(w,c)D(w,c)\right)f(w,c,t),
\ee 
which in general cannot be solved, except is some special cases as discussed in the previous section, due to the nonlinear term on the right hand side. A possible way to overcome this difficulty is to consider a quasi steady-state approximation as follows. We first integrate the previous equation in the cell  $[w_i,w_{i+1}]$ to get
\[
\int_{w_i}^{w_{i+1}} \left(\frac{1}{f}\dfrac{\partial}{\partial w}f\right)(w,c,t)\,dw = -
\frac{2}{\sigma^2}\int_{w_i}^{w_{i+1}} \frac{1}{D(w,c)^2}\left(\mathcal{P}[f] + \sigma^2 D'(w,c)D(w,c)\right)\,dw,
\]
and then
\[
\frac{f_{i+1}}{f_i} = \exp\left(-
\frac{2}{\sigma^2}\int_{w_i}^{w_{i+1}} \frac{1}{D(w,c)^2}\left(\mathcal{P}[f] + \sigma^2 D'(w,c)D(w,c)\right)\,dw\right).
\]
Next we can approximate the integral on the right hand side with a suitable quadrature formula. Because of singularities at the boundaries $w=\pm 1$ of the integrand function we can resort on open formula of Newton-Cotes type. For example, using the simple midpoint rule a second order approximation is obtained
\be
\label{eq:mid}
\frac{f_{i+1}}{f_i} \approx \exp\left(-
\frac{2\Delta w}{\sigma^2}\frac{1}{D^2_{i+1/2}}\left(\mathcal{P}[f_{i+1/2}] + \sigma^2 D'_{i+1/2}D_{i+1/2}\right)\right).
\ee
Now by equating (\ref{eq:mid}) and (\ref{eq:cc}) we recover the following expression of the weight functions
\be
\label{eq:weights}
\delta_{i+1/2}=\frac1{\lambda_{i+1/2}}+\frac{1}{1-\exp(\lambda_{i+1/2})}, 
\ee 
where
\be
\lambda_{i+1/2}=\frac{2\Delta w}{\sigma^2}\frac{1}{D^2_{i+1/2}}\left(\mathcal{P}[f_{i+1/2}] + \sigma^2 D'_{i+1/2}D_{i+1/2}\right).
\ee
Note that here, at variance with the standard Chang-Cooper scheme \cite{CC}, the weights depend on the solution itself as in \cite{LLPS}. Thus we have a nonlinear scheme which preserves the steady state with second order accuracy. In particular, by construction, the weight in (\ref{eq:weights}) are nonnegative functions with values in $[0,1]$. 

Higher order accuracy of the steady state can be recovered using a more general numerical flux given by 
\be
\begin{split}
\label{eq:fluxe}
\mathcal{F}_{i+1/2}[f]&=\frac{D^2_{i+1/2}}{\Delta w}\left((1-\delta_{i+1/2})\int_{w_i}^{w_{i+1}}\frac{\mathcal{P}[f] + \sigma^2 D'(w,c)D(w,c)}{D(w,c)^2}\,dw+\frac{\sigma^2}{2} \right)f_{i+1}\\
&+\frac{D^2_{i+1/2}}{\Delta w}\left(\delta_{i+1/2} \int_{w_i}^{w_{i+1}}\frac{\mathcal{P}[f] + \sigma^2 D'(w,c)D(w,c)}{D(w,c)^2}\,dw-\frac{\sigma^2}{2} \right)f_{i},
\end{split}
\ee  
and taking
\be
\lambda_{i+1/2}=\frac{2}{\sigma^2}\int_{w_i}^{w_{i+1}} \frac{1}{D(w,c)^2}\left(\mathcal{P}[f] + \sigma^2 D'(w,c)D(w,c)\right)\,dw.
\ee

\section{Numerical Results}
In this section we perform several numerical test to validate our modeling and numerical setting. We focus on the case $\alpha< 1$, since it represents the most relevant case in complex networks \cite{AB, XZW}, for this range of the parameter we have emergence of power law distributions for the network's  connectivity. 
Except for the first test case where we analyze the numerical convergence of the Boltzmann model in the quasi-invariant limit, in all the other test the opinion dynamics evolves according to \eqref{eq:FP}. The compromise function $P(c,c_*;w,w_*)$ and the local diffusion function $D(w,c)$ will be specified in the various tests.
The choice of parameters for the different tests is summarized in Table \ref{tab:par}, where other parameters are introduced additional details will be reported.
\begin{table}[htb]
\caption{Parameters in the various test cases}
\label{tab:all_parameters}
\begin{center}
\begin{tabular}{cccccccccc}
\hline
Test & $\sigma^2$ & $\sigma^2_F$ & $\sigma^2_L$ & $\cm$  & $\Ur$ &$\Ua$ &  $\gamma_0$ & $\alpha$ & $\beta$ \\
\hline
\hline
\#1 & $5\times10^{-2}$  & $6\times10^{-2}$ & $-$ & $250$   & $1$& 1 &  $30$ & $1\times10^{-1} $ & $0$ \\
\hline
\#2 & $5\times10^{-2}$  & $6\times10^{-2}$ & $-$ & $250$   & $-$& $-$ &  $30$ & $1\times10^{-1} $ & $0$\\
\hline
\#3 & $5\times10^{-3}$  & $4\times10^{-2}$ & $2.5\times10^{-2}$ & 250  & $1$ & $1$& 30 & $1\times10^{-4}$ & $0$\\
\hline
\#4 & $1\times10^{-3}$  & $-$ & $-$ & 250  & $1$ & $1$& 30 & $1\times10^{-1}$ & $0$ \\
\hline
\end{tabular}\label{tab:par}
\end{center}
\end{table}

\subsection{Test \#1}
We first consider the simple test case where the opinion evolves independently of the network and we validate the Chang-Cooper type scheme comparing its convergence with respect to the Monte-Carlo methods.

We simulate the dynamics with the linear compromise function  
$P(w,w_*;c,c_*) = 1$, and $D(w,c) = 1-w^2$, thus we can use the results \eqref{eq:stat_1}, to compare the solutions obtained through the numerical scheme with the analytical one. The other parameters of the model are reported in Table \ref{tab:all_parameters} and we define the following initial data
\begin{align}\label{eq:g0}
g_0(w)      = \frac{1}{2\sqrt{2\pi\sigma_F^2 }}(\exp\{-(w+1/2)^2/{(2\sigma_F^2)}\}+\exp\{-(w-1/2)^2/{(2\sigma_F^2)}\}).
\end{align}


In Figure \ref{Fig:MC}, on the left hand-side, we report the qualitative convergence of the Monte-Carlo methods, where we consider $N_s = 10^5$ samples to reconstruct the opinion's density, $g(w,t)$, on a grid of $N = 80$ points. The figure shows that for decreasing values of the scaling parameter $\varepsilon = \{0.5, 0.05, 0.005\}$, we have convergence to the reference solutions, \eqref{eq:stat_1} of the Fokker-Planck equation.
On the right we report the convergence to the stationary solution of the connectivity distribution, \eqref{prop:st}, for $\alpha = 0.1$ and $V = 1$ and with $\cm = 250$. In this case we show two different qualitative behaviors for an increasing number of samples $N_s=\{10^3, 10^5\}$ and for sufficient large times.
\begin{figure}[t]
\centering
\includegraphics[scale= 0.3]{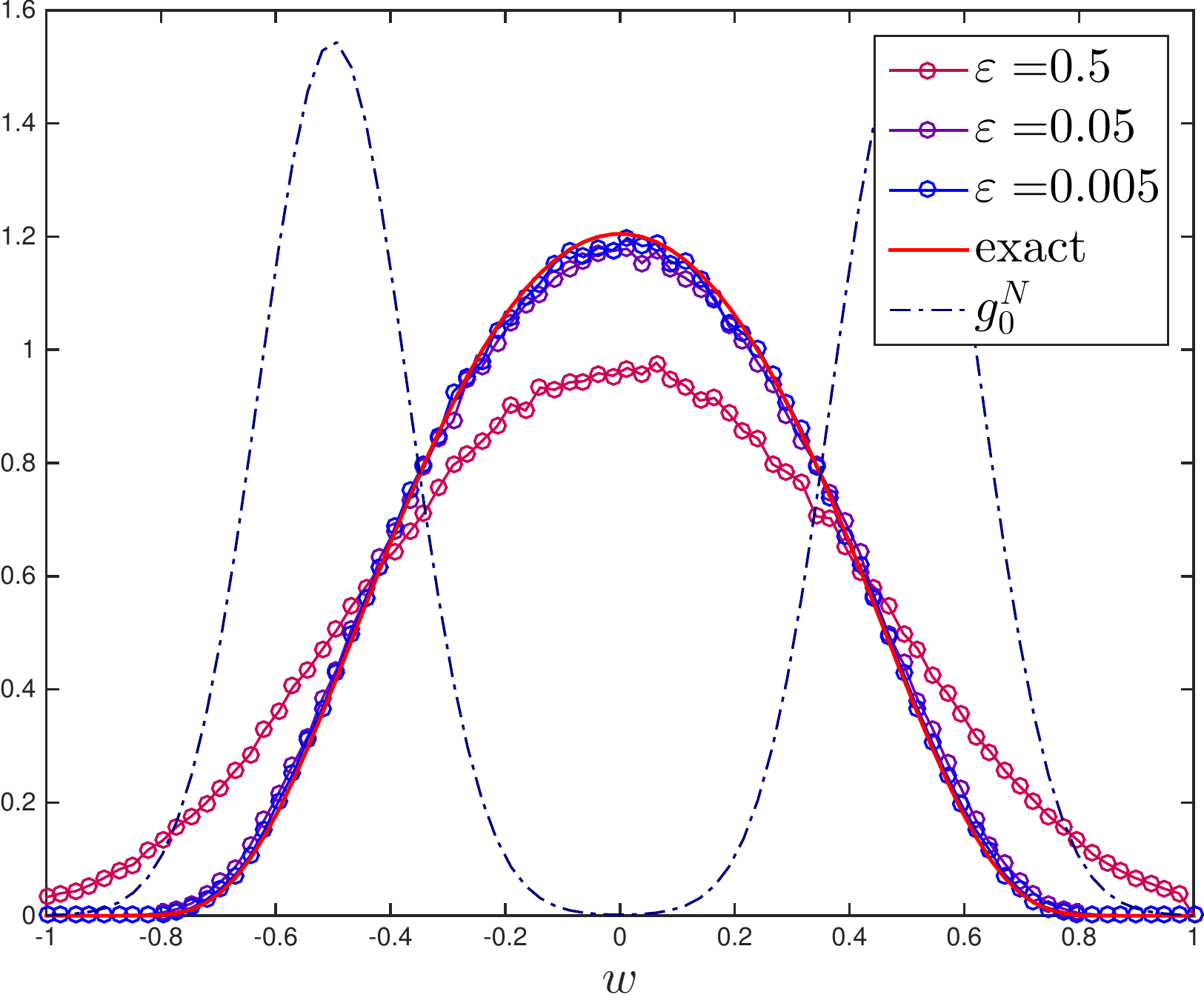}
\hskip +0.2cm
\includegraphics[scale= 0.3]{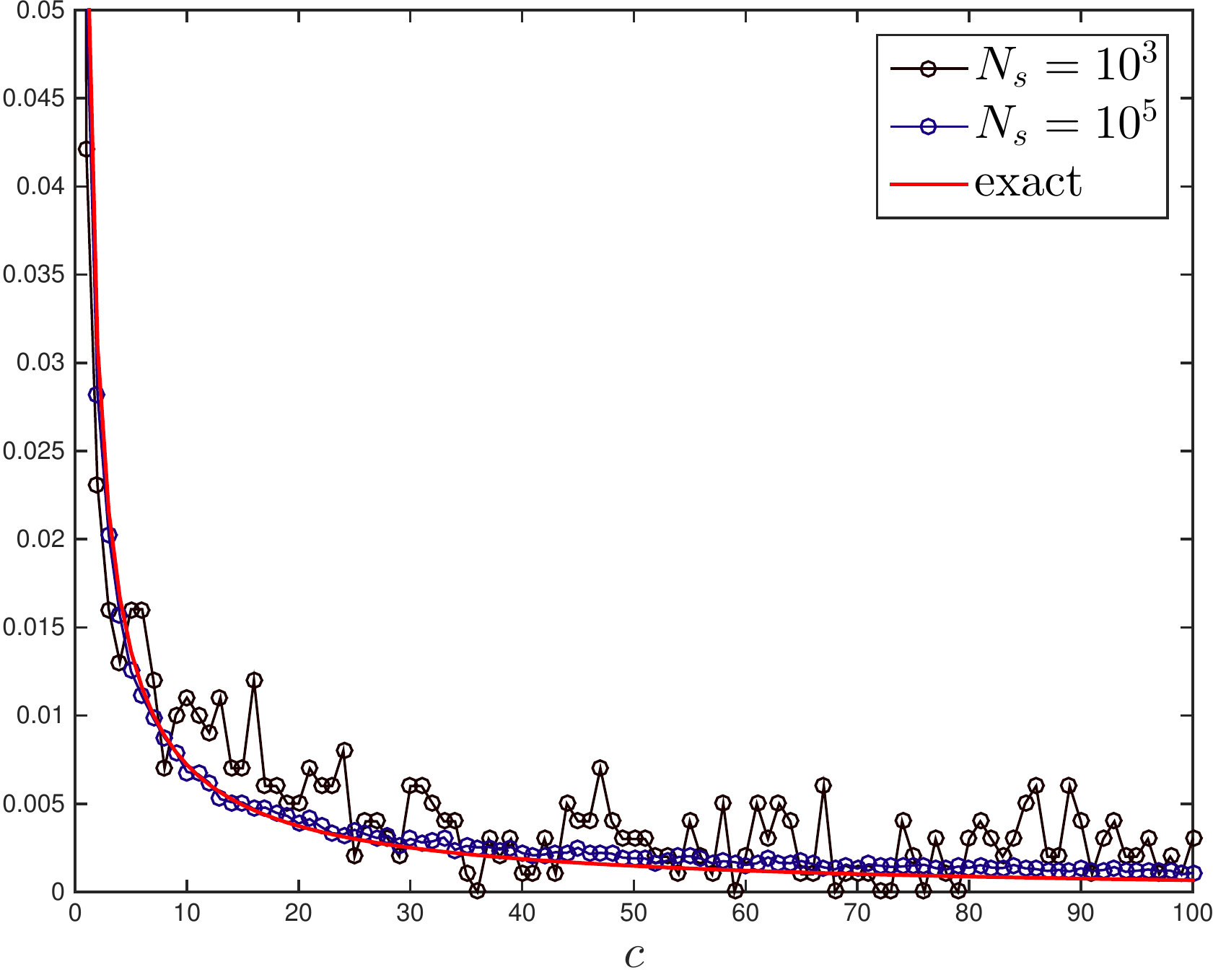}
\caption{Test \#1.
One-dimensional setting: on the left, convergence of \eqref{eq:MCBoltz} to the stationary solution \eqref{eq:stat_1}, of the Fokker-Planck equation, for decreasing values of the parameter $\varepsilon$, $g^N_0$ represent the initial distribution. On the right, convergence of the Monte-Carlo \eqref{eq:MCmaster} to the reference solution \eqref{prop:st}  for increasing value of the the number of  samples $N_s$.
 }\label{Fig:MC}
\end{figure}

In Figure \ref{Fig:CC}, on the left-hand side we report the qualitative solution of the Chang--Cooper type scheme integrated with the explicit Euler method, on the right-hand side we depict the decay of the $L_1$ relative error to the reference solution, \eqref{eq:stat_1}, i.e.
\begin{align}\label{eq:L1err}
\frac{\|{g}^N - g_\infty\|_1}{\|g_\infty\|_1},
\end{align}
with $g^N$ representing the approximated solution of the numerical scheme.
We test the scheme's convergence for different quadrature rules, and additionally we compared  them with the error of the Monte-Carlo simulation.
The right-hand plot shows how the Change Cooper scheme is able to reach high order of accuracy, for high order quadrature rules in \eqref{eq:fluxe}.
For the discretization we consider the following parameters: we account $N=80$ and we define $\Delta w = 2/N$ and time step $\Delta t = (\Delta w)^2/(4\sigma^2)$,  for a final time of $T = 10$. The Binary Interaction algorithm is performed with $\varepsilon = 0.0005$ and with a number of sample $N_s = 10^5$.
\begin{figure}[t]
\centering
\includegraphics[scale= 0.3]{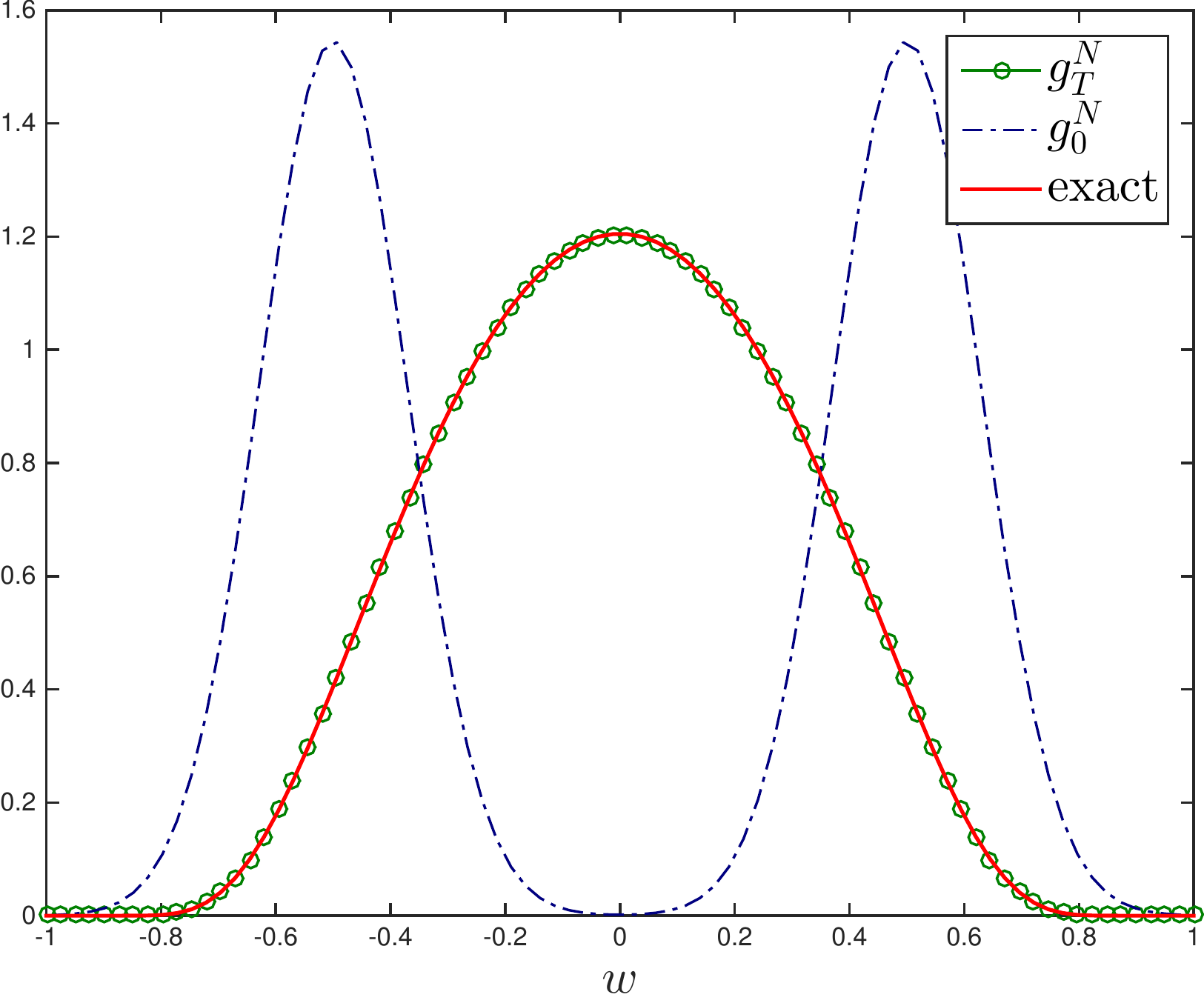}
\hskip +0.2cm
\includegraphics[scale= 0.3]{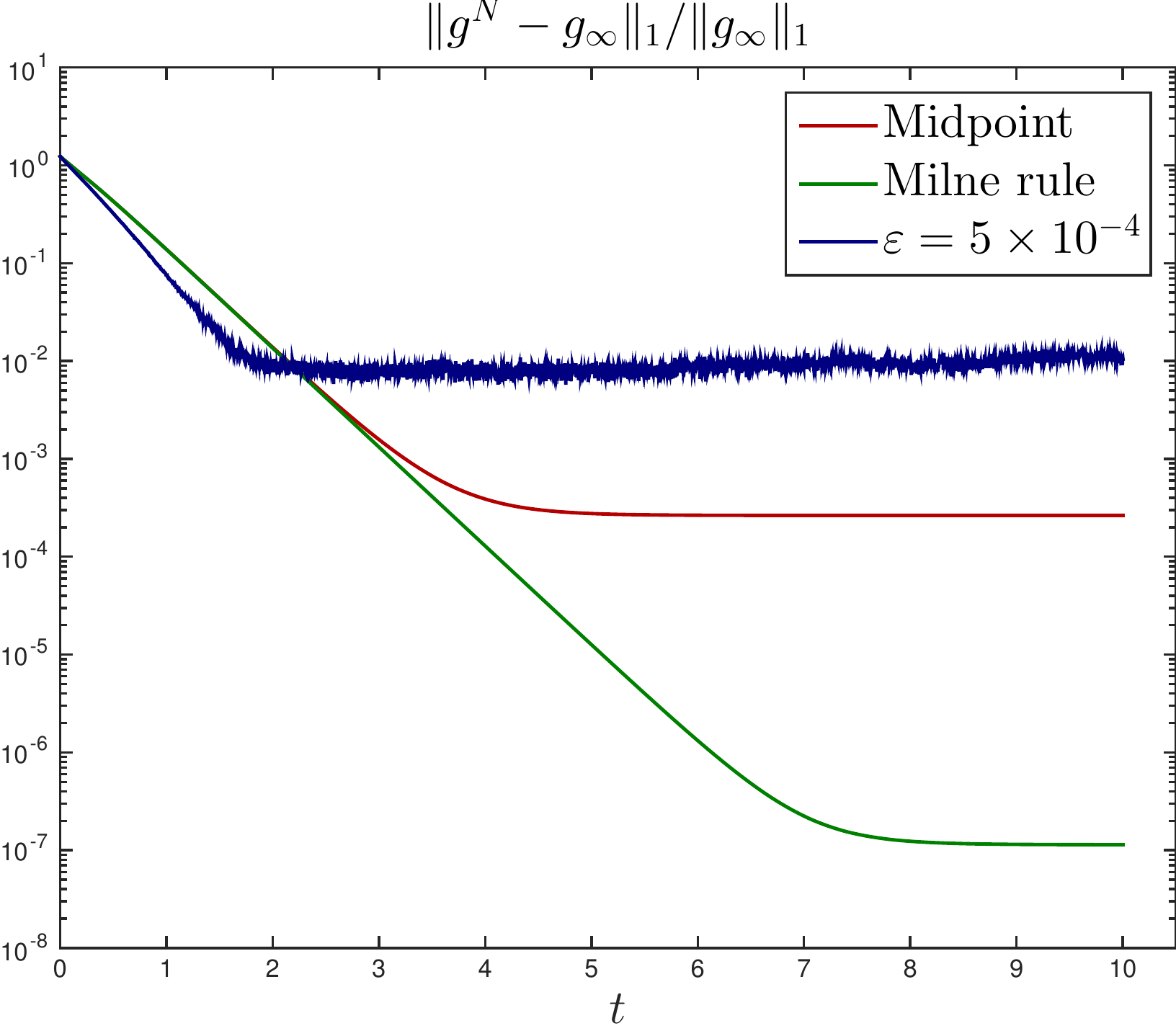}
\caption{Test \#1.
One-dimensional setting: on the left, The solution of the Chang--Cooper type scheme, is indicated with $g^N_T$ and compared with the stationary solution \eqref{eq:stat_1}, also the initial data $g^N_0$, \eqref{eq:g0}, is reported. On the right, decay of the $L^1$ relative error, \eqref{eq:L1err}, for the different choice of the quadrature rule, mid--point rule, \eqref{eq:mid}, and Milne's rule, (respectively of $2^{nd}$ and $4^{th}$ order). 
 }\label{Fig:CC}
\end{figure}

 \subsection{Test \#2}
 Next we consider a second validation test in the full case where the opinion and the network evolution are coupled, again with  linear compromise function, 
$P(w,w_*;c,c_*) = 1$, and $D(w,c) = 1-w^2$. In this case we are able again to characterize the analytical solution of the model as the product of the two stationary solution for the opinion variable and the connectivity, i.e. $f_\infty(w,c) = \rho_\infty(c)g_\infty(w)$. 
We define an initial data as follows,
\begin{equation}
\begin{aligned}\label{eq:f0}
f_0(w,c)      = \frac{2}{3}p_0(c)g_0^+(w)+\frac{1}{3}p_0(c-c_0)g_0^-(w),
\end{aligned}
\end{equation}
where 
\begin{align}
p_0(c)        =  k_0\max\{c(c-2\gamma_0),0\}, \qquad g_0^\pm(c) =  \frac{1}{\sqrt{2\pi\sigma_F^2 }}(\exp\{-(w\pm1/2)^2/{(2\sigma_F^2)}\}.
\end{align}
and with coefficient $c_0=20$ and  $k_0 = 3/(20\gamma_0^3)$.
Parameters are defined in Table \ref{tab:all_parameters}, except for the characteristic rates, which will be defined in two different ways.
We want to study the decay of the $L^1$ relative error with respect to the time, as depicted in Figure \ref{Fig:CC}.

In the first case we consider constant characteristic rates, i.e. $V=\Ua=\Ur$, showing that for increasing values of $V$ the convergence of the numerical scheme is faster. This is not surprising since for larger values of $V$ the dynamics of the connectivity distribution relaxes faster towards the stationary state.
 
We report in Figure \ref{Fig:stat} the evolution of the density $f(w,c,t)$,  in the time frame $[0,T]$, with $T=20$, where 
on the (z,c)-axis the distribution of the connections, $\rho(c,t)$, is represented in order to better enlighten the convergence to the power-law like distribution.
\begin{figure}[t]
\centering
\subfigure[t=0]{\includegraphics[scale= 0.3]{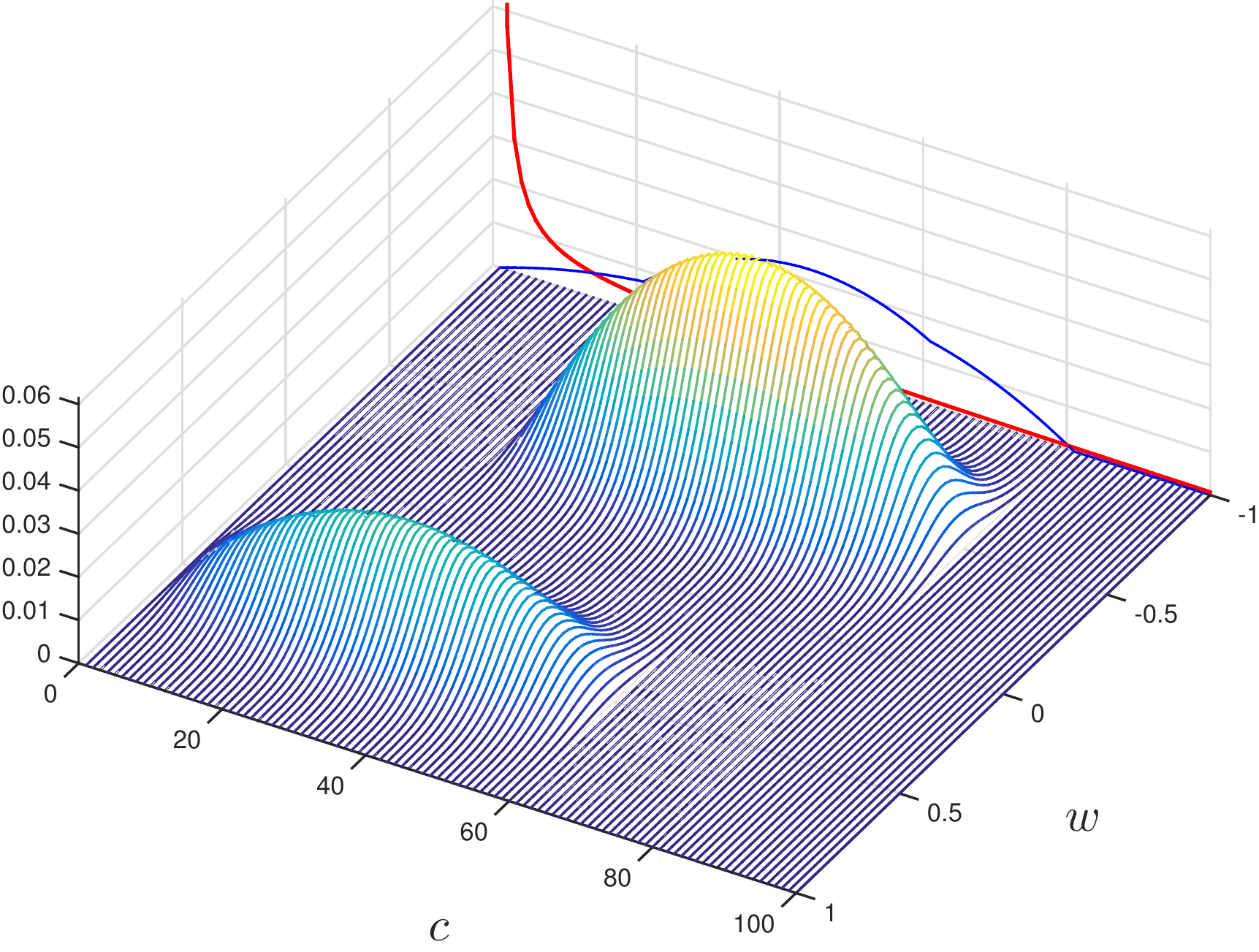}}
\hskip +0.2cm
\subfigure[t=2]{\includegraphics[scale= 0.3]{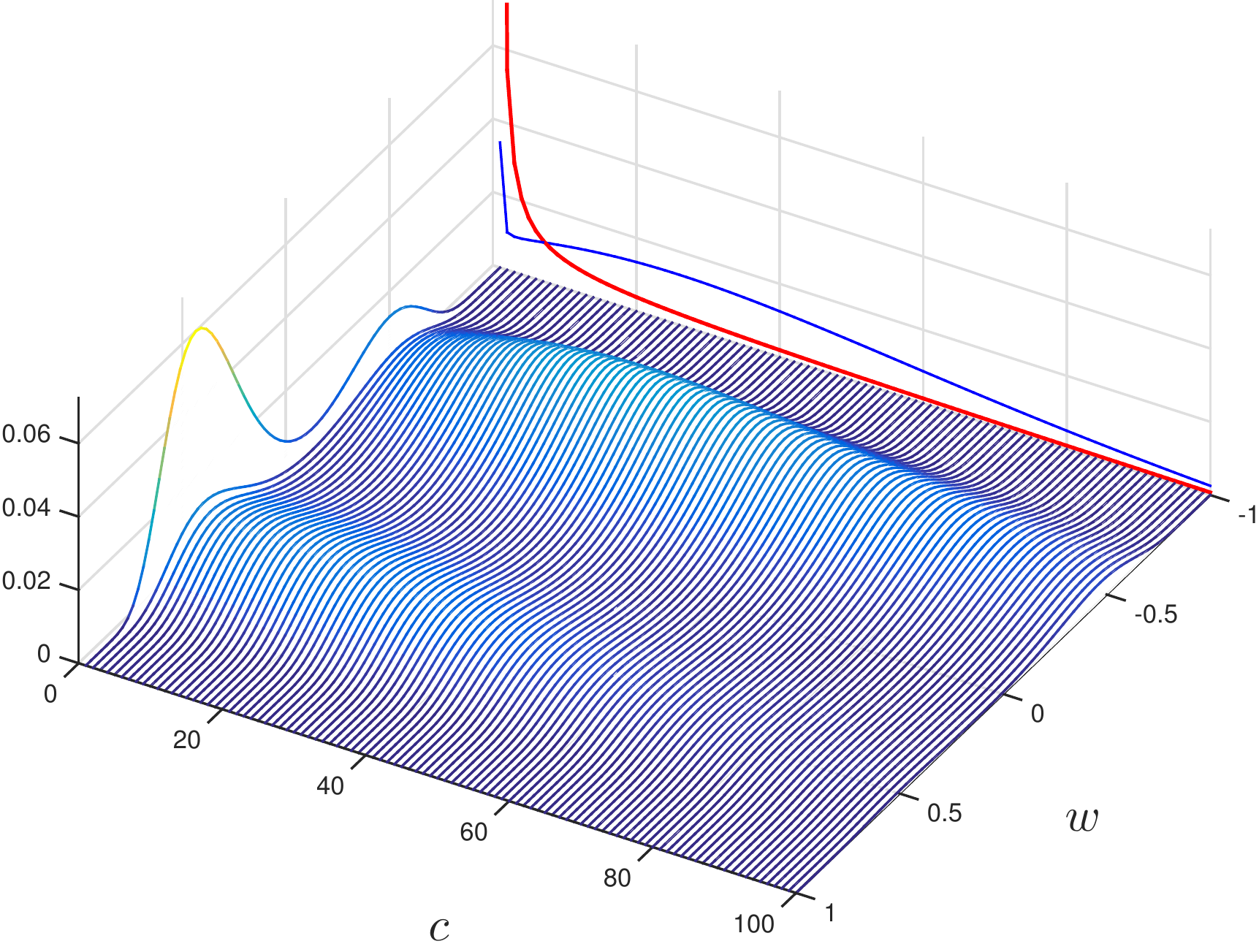}}
\\
\subfigure[t=5]{\includegraphics[scale= 0.3]{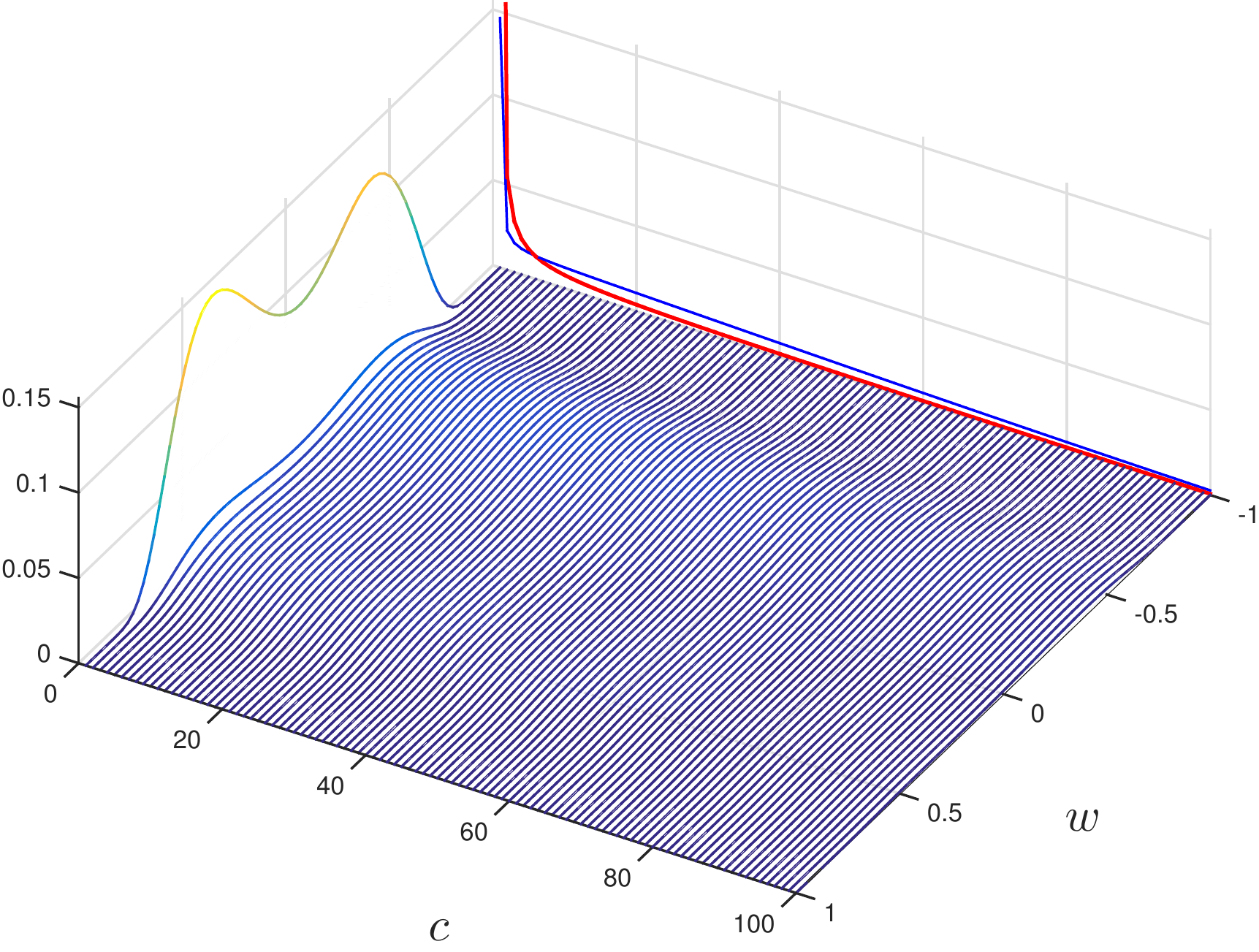}}
\hskip +0.2cm
\subfigure[t=20]{\includegraphics[scale= 0.3]{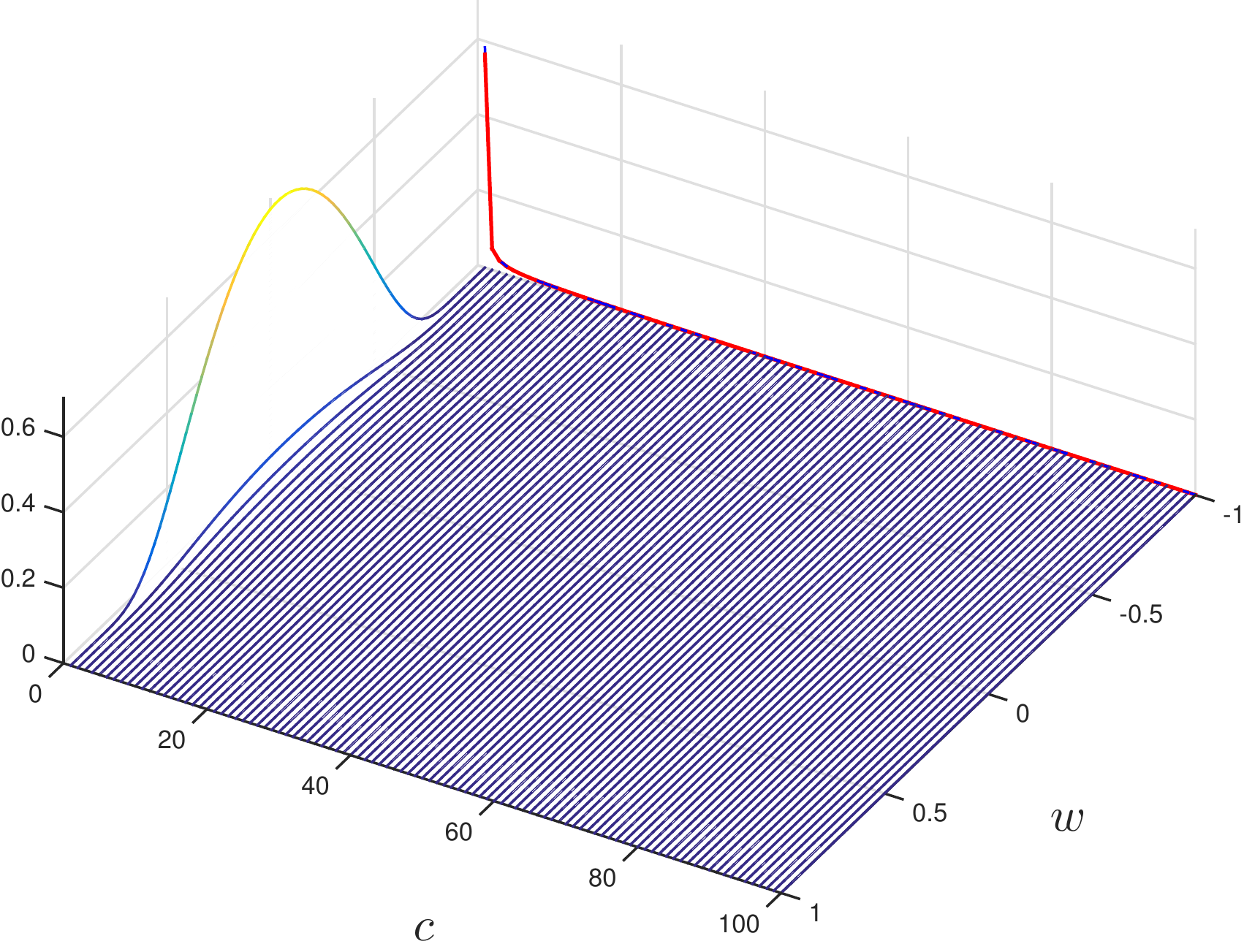}}
\caption{Test \#2. From left to right and from the top to the bottom, evolution of the density $f(w,c,t)$. Where plot $(a)$ represents the initial data , $f_0(w,c)$, \eqref{eq:f0}, and plot $(d)$ the stationary solution.
On the $(c,z)-$ plane we depict with a blu line the marginal distribution of the solution at time $t$, $p(c,t)$, with red line we represent the reference marginal distribution of the stationary solution.
 }\label{Fig:stat}
\end{figure}

In a second test we performed the same simulation, but with characteristic rates defined as in Remark \ref{rmk:CR}, thus
\be
\Ur(f;w)=U_r\frac{\gamma+\beta}{\gamma_f+\beta g(w,t)},\quad \Ua(f;w)=U_a\frac{\gamma+\alpha}{\gamma_f+\alpha g(w,t)}
\label{eq:rates2}
\ee
with $U=U_a=U_r$, $\beta = 0$ and $\gamma_f(t) = \sum_{c=0}^{\cm} cf(w,c,t)$. Simulations shows that in this case the same stationary solution are obtained.

We report in Figure \ref{Fig:error2d}, the decay of the errors for different values of the characteristic rates, in the two different cases,
$V=\{10^3,10^4,10^5\}$ for the constant rate and $U =\{10^3,10^4,10^5\}$ for the variable rates. In both cases we observe a faster convergence to the stationary solution for increasing values of the characteristic rates. Observe that the same order of accuracy of the mid--point rule in Figure \ref{Fig:CC} is recovered, on the other hand tiny differences in the decay are observed between the two cases.
In Figure \ref{Fig:Trans}, we enlighten the different evolution of the transient solution at time $t = 1$, of the simulation in Figure 6. On the left we depict the solution with constant characteristic rates, on the right with variable characteristic rates, which
shows that lower density in the opinion leads to faster spread on the connections.
\begin{figure}[t]
\centering
\includegraphics[scale= 0.3]{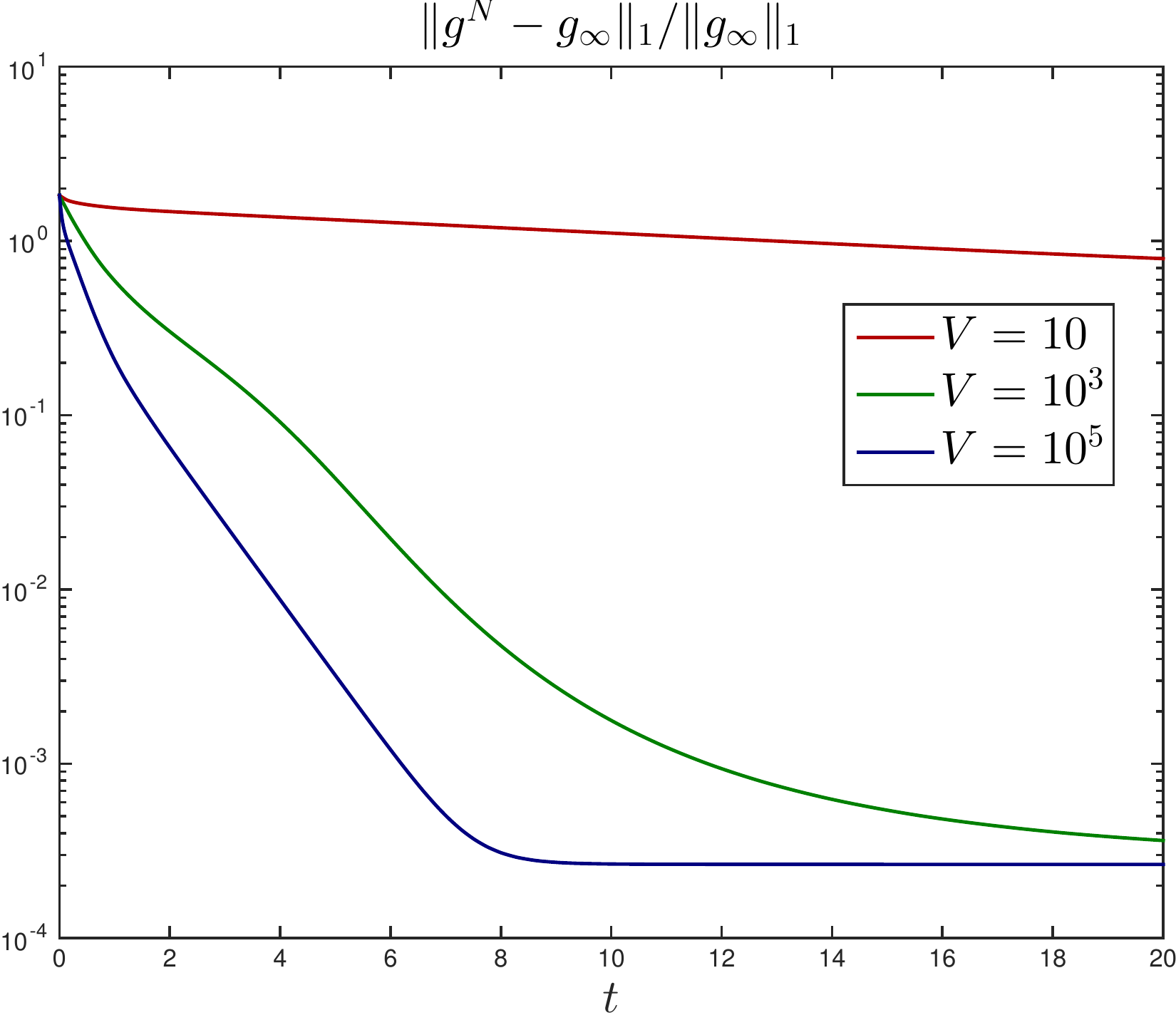}
\hskip +0.2cm
\includegraphics[scale= 0.3]{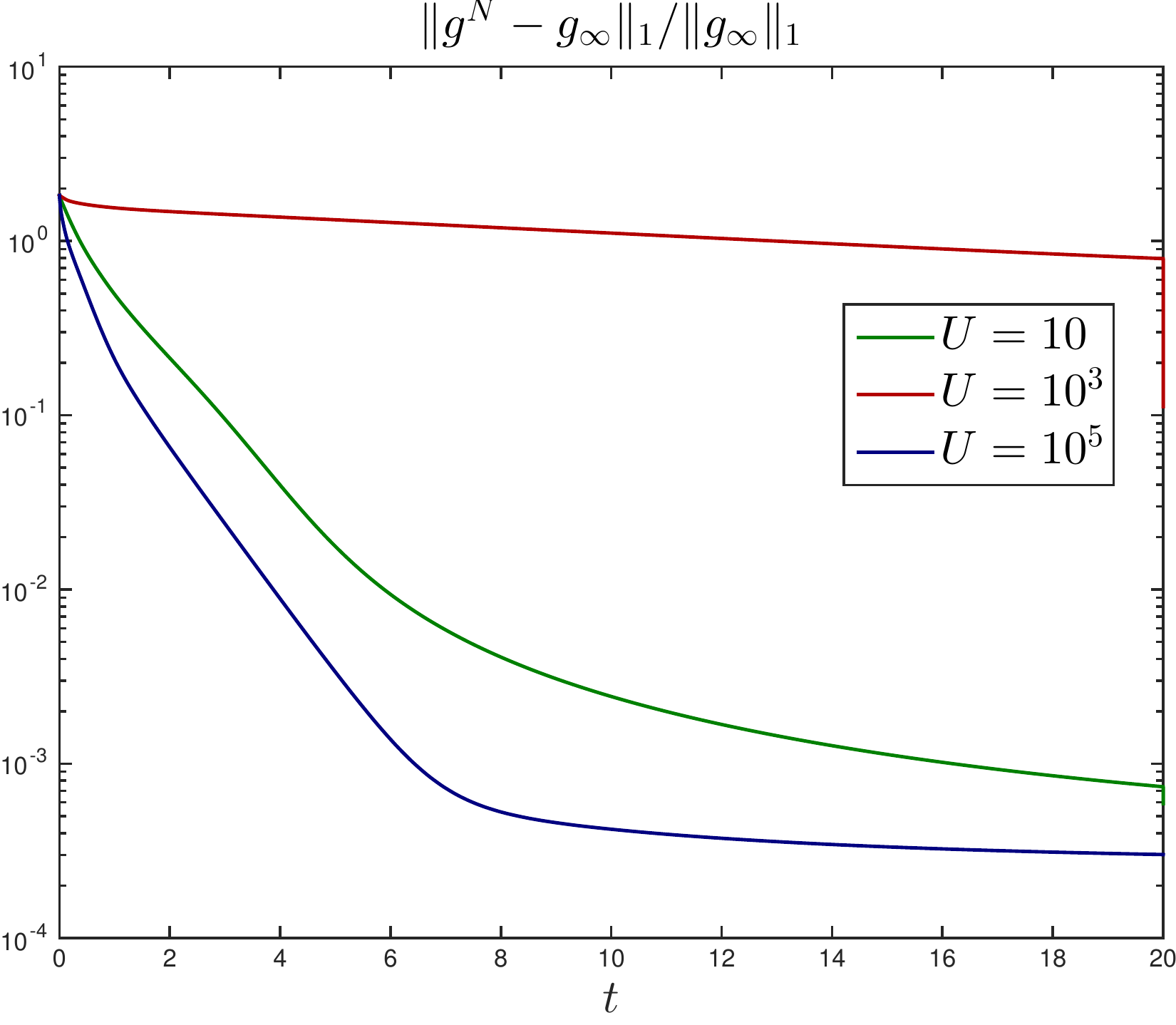}
\caption{Test \#2. Decay of the  $L^1$ relative error with respect to the stationary solution \eqref{eq:stat_1}. On the left, fixed characteristic rates $V=\{10^3,10^4,10^5\}$, on the right variable characteristic rates defined as in  \eqref{eq:rates2}, with $U =\{10^3,10^4,10^5\}$. In both cases for increasing values of the characteristic rate $V$ and $U$ the stationary state is reached faster. }\label{Fig:error2d}
\end{figure}

\begin{figure}[t]
\centering
\includegraphics[scale= 0.3]{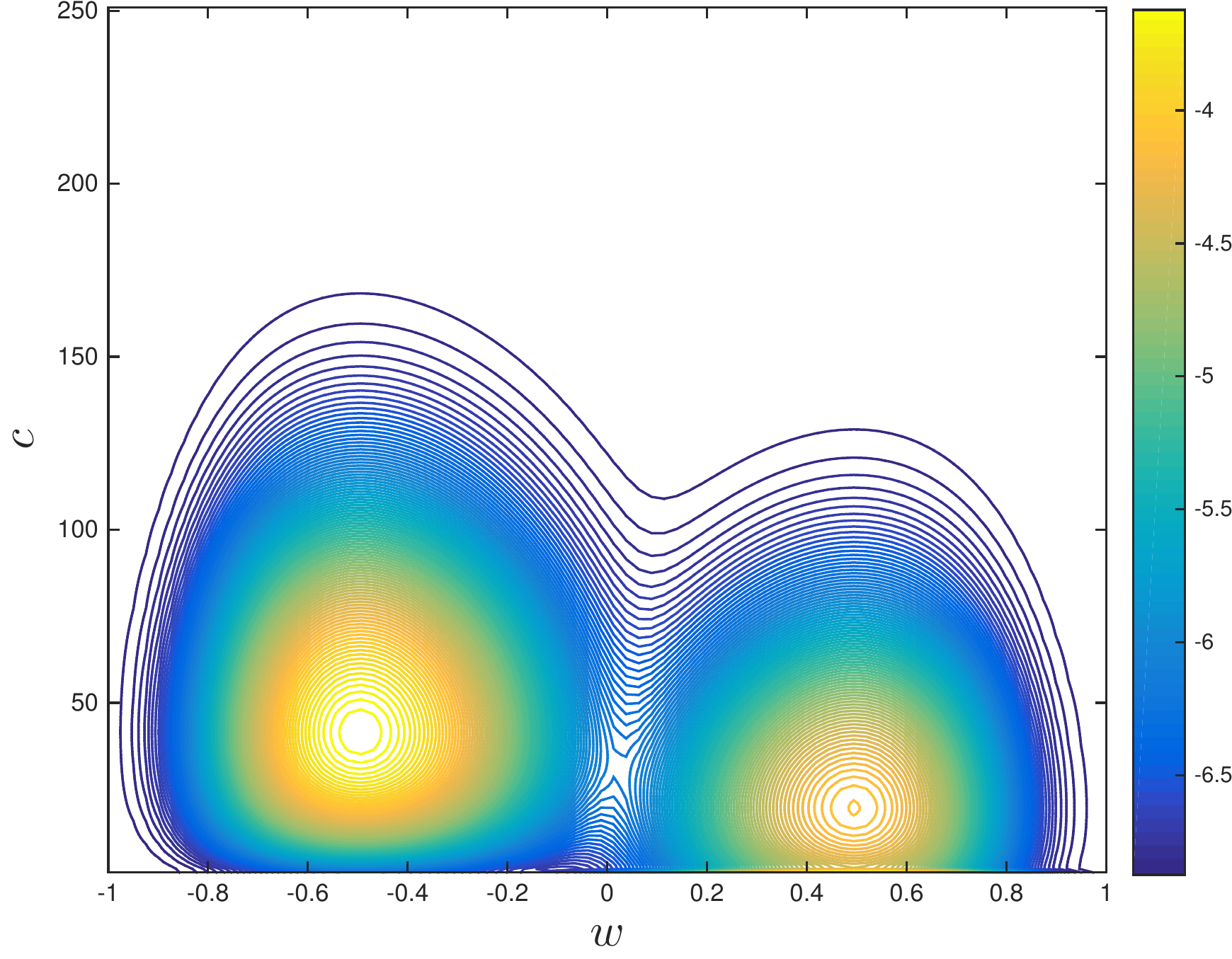}
\hskip +0.2cm
\includegraphics[scale= 0.3]{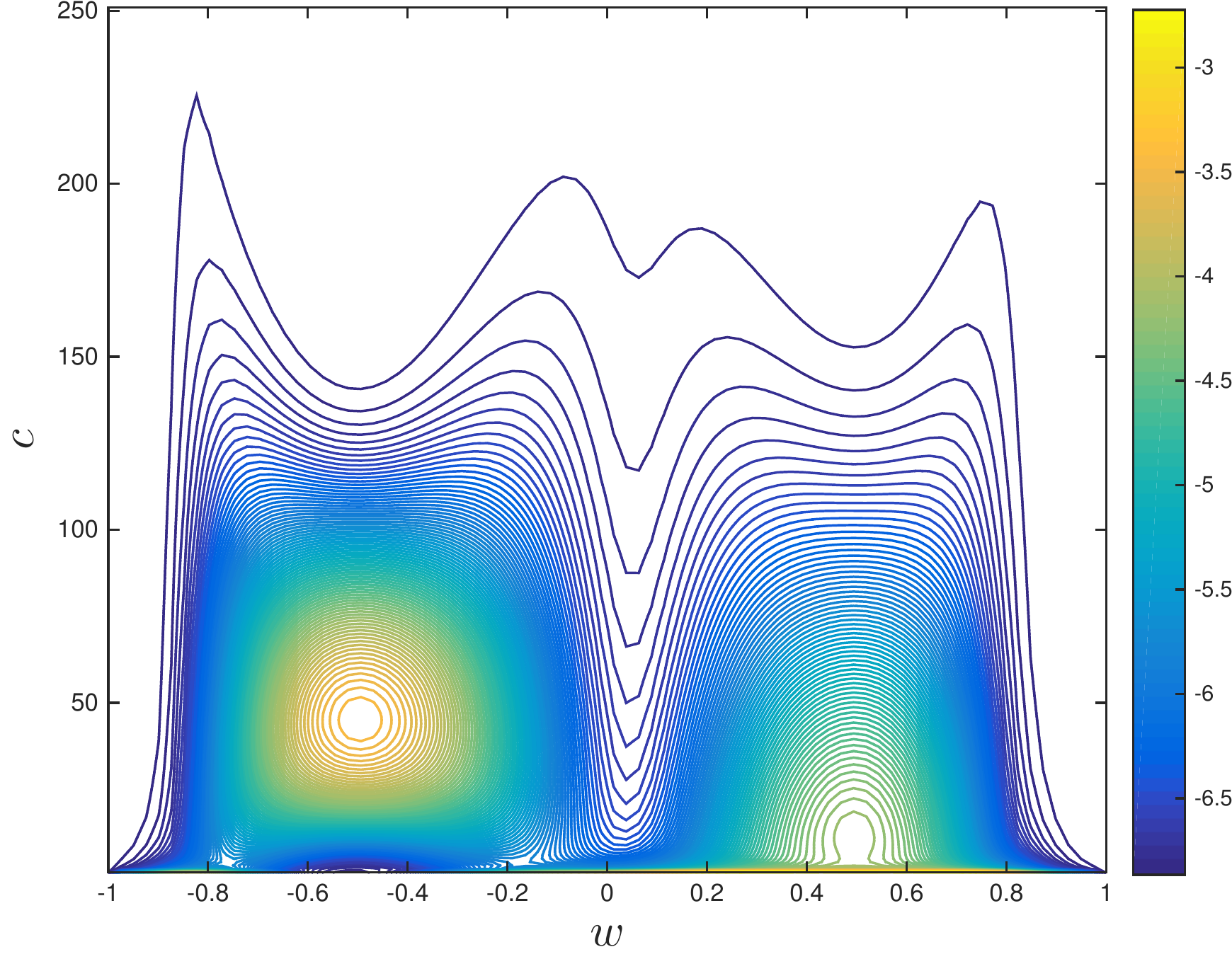}
\caption{Test \#2. Evolution at time $t = 1$ of the initial data,
$f_0(w, c)$, \eqref{eq:f0}, as isoline plot. On the left in the case of constant
characteristic rate on the right variable characteristic rates defined
as in \eqref{eq:rates2}. The right plot shows that for lower opinion's density the evolution along the connection is faster and slower where the opinions are more concentrated.}\label{Fig:Trans}
\end{figure}

\subsection{Test \#3}
In this test we analyze the influence of the connections over the opinion dynamics, by considering a compromise function of the type 
\[
P(w,w_*;c,c_*) = H(w,w_*)K(c,c_*), 
\]
where $H(w,w_*) = 1-w^2$ and  $K(\cdot,\cdot)$, which models the influence of the connectivity on the opinion evolution, is defined as follows
 \begin{align}\label{eq:Kc}
\quad K(c,c_*) = \left(\frac{c}{\cm}\right)^{-a} \left(\frac{c_*}{\cm}\right)^{b},
\end{align}
for $a,b >0$. This type of kernel assigns higher relevance into the opinion dynamics to higher connectivity, and low influence to low connectivity.
The diffusivity is weighted by $D(w,c) = 1-w^2$.

We perform a first computation with initial data 
\begin{align}\label{eq:T2f0}
f_0(w,c) =C_0
\begin{cases}
 \rho_\infty(c)\exp\{-(w+\frac{1}{2})^2)/(2\sigma_F^2)\},&\quad \textrm{ if }  0\leq c\leq 20,\\
 \rho_\infty(c)\exp\{-(w-\frac{3}{4})^2/(2\sigma_L^2)\},&\quad \textrm{ if }   60\leq c\leq 80,\\
 0, &\quad \textrm{ otherwise},
\end{cases}
\end{align}
where the parameters' choice is reported in the third line of Table \ref{tab:par}, and $a=b=3$ for the interaction function $K(\cdot,\cdot)$, \eqref{eq:Kc}.
The evolution is performed through the Chang-Cooper type scheme with $\Delta w = 2/N$, with $N = 80$. We study the evolution of the system in the time interval $[0,T]$, with $T=2.5$.

In Figure \ref{Fig:F3} we report the result of the simulation. The initial configuration is is split in two parts, the majority concentrated around opinion $\bar{w}_F = - 1/2$ and only a small portion concentrated around $\bar{w}_L= 3/4$, from the upper-right and bottom-left figures we observe that, because of the anisotropy induced by $K(c,c_*)$, the density with a low level of connectivity is immediately influenced by the small concentration of density around $w_L$ with a large level of connectivity; the bottom-right plot shows the final configuration.

In Figure \ref{Fig:F4} we depict, on the left-hand side, the initial and final marginal density of the opinion, respectively $g(w,0)$ and $g(w,T)$, showing the change of the total opinion. On the right we enlighten the change of opinion plotting  the evolution of the average opinion.
\begin{figure}[t]
\centering
\subfigure[t=0]{\includegraphics[scale= 0.3]{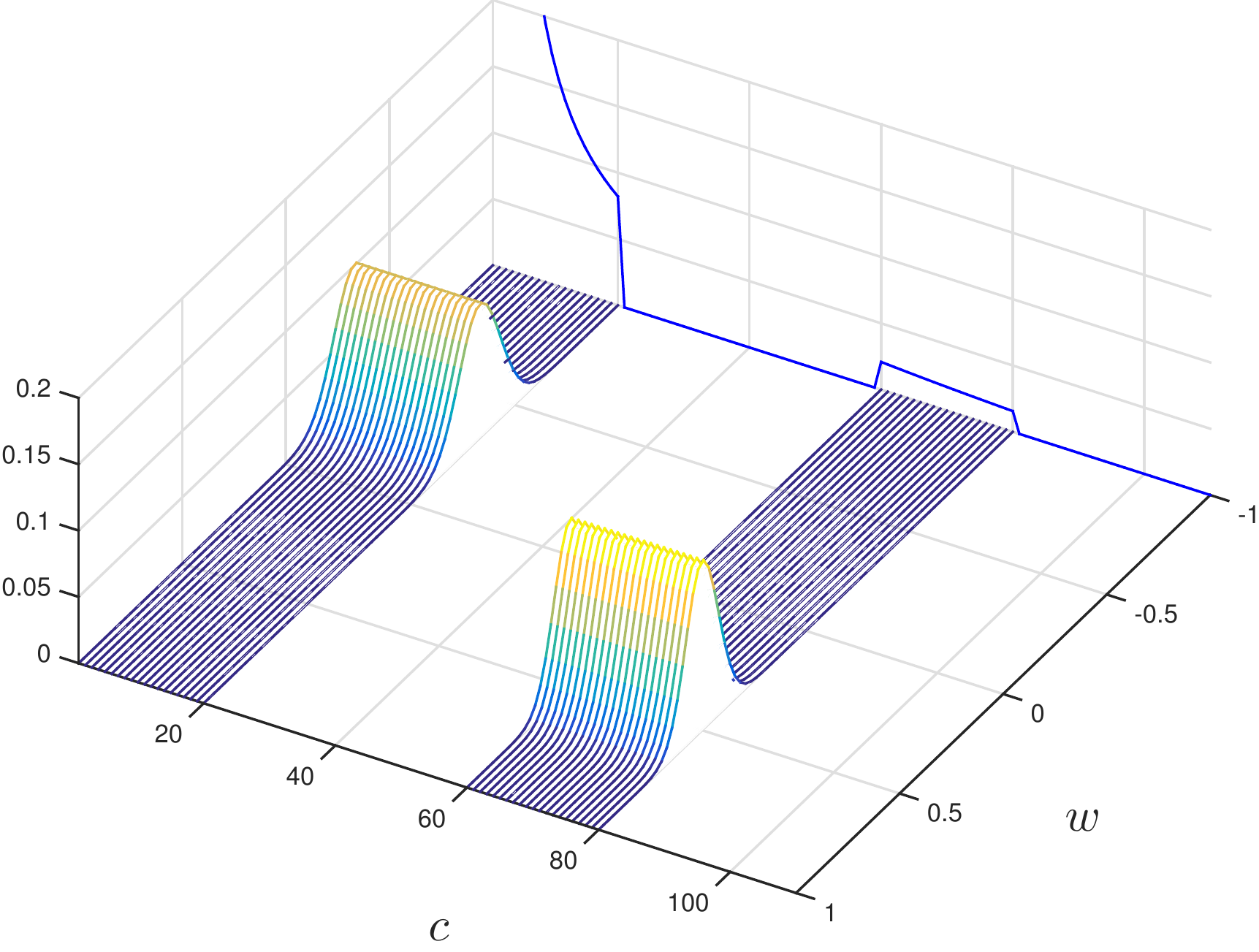}}
\hskip +0.2cm
\subfigure[t=0.2]{\includegraphics[scale= 0.3]{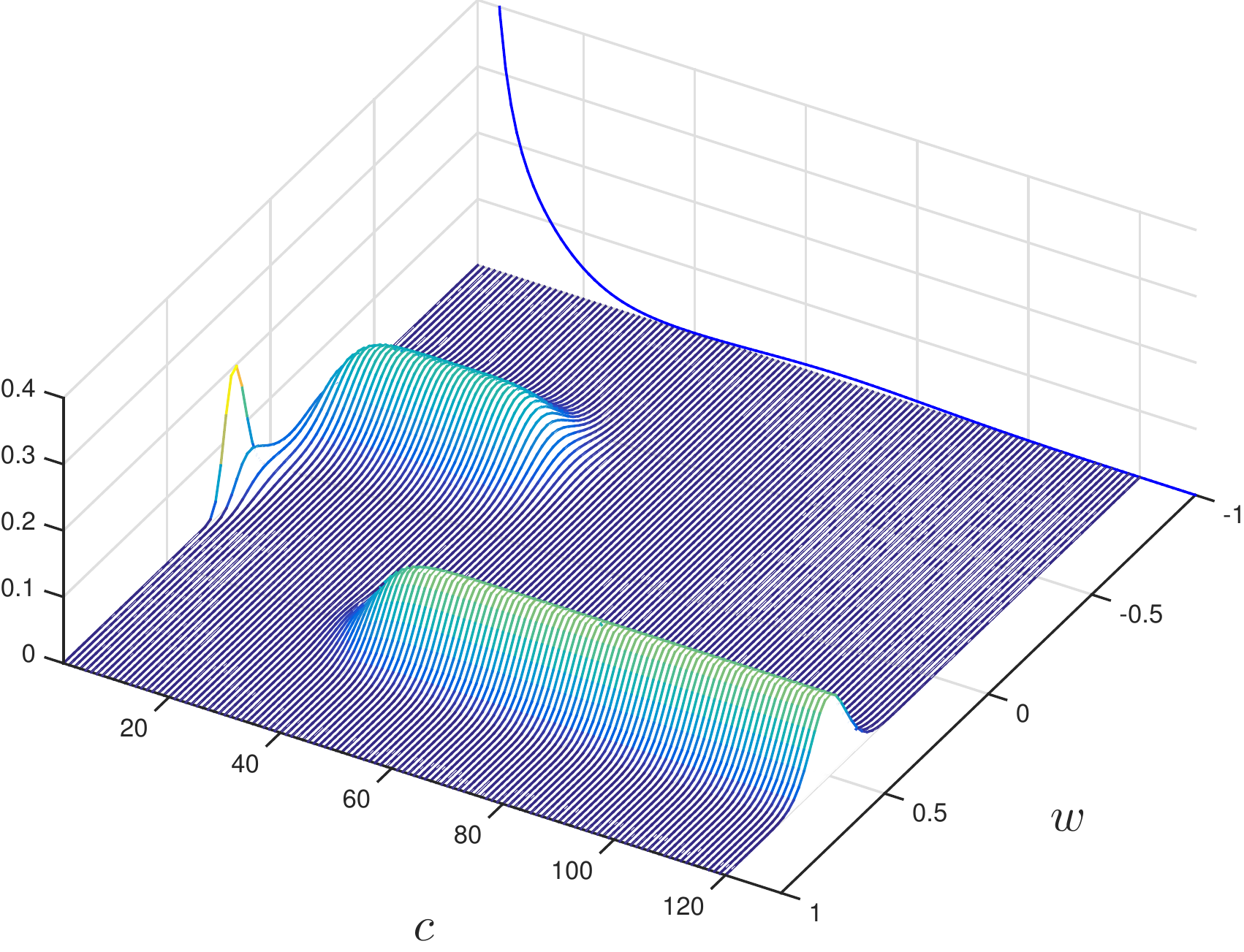}}
\\
\subfigure[t=1.5]{\includegraphics[scale= 0.3]{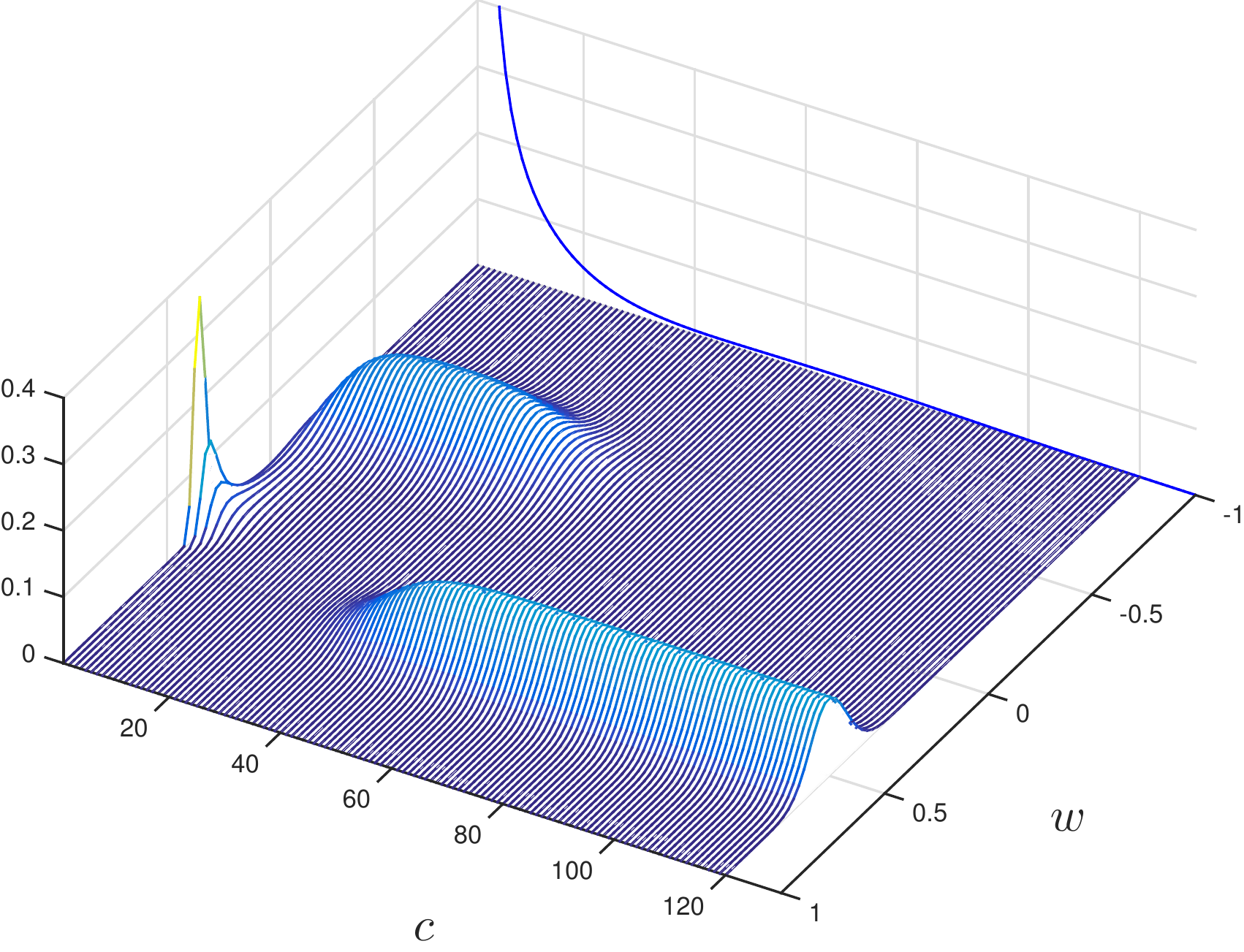}}
\hskip +0.2cm
\subfigure[t=2.5]{\includegraphics[scale= 0.3]{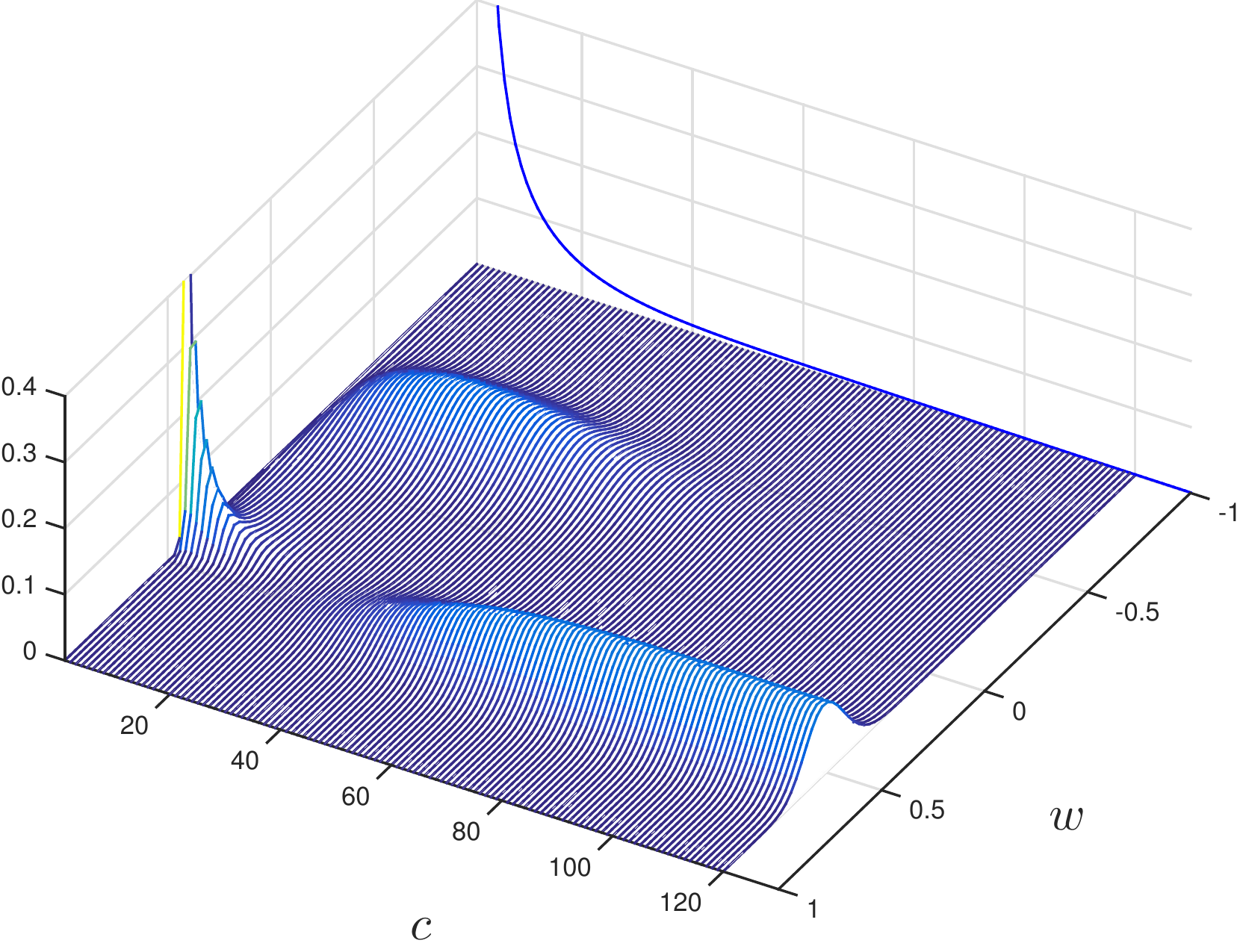}}
\caption{Test \#3. From left to right and from the first row to the second row, evolution of the initial data \eqref{eq:T2f0} in time frame $[0,T]$, with $T = 2$. The evolution shows how a small portion of density with high connectivity can bias the majority of the population towards their position. 
(Note: The density is scaled according to the marginal distribution $\rho(c,t)$ in order to better show its evolution, the actual marginal density $\rho(c,t)$ is depicted in the background, scaled by a factor 10).
}\label{Fig:F3}
\end{figure}
\begin{figure}[t]
\centering
\includegraphics[scale=0.3]{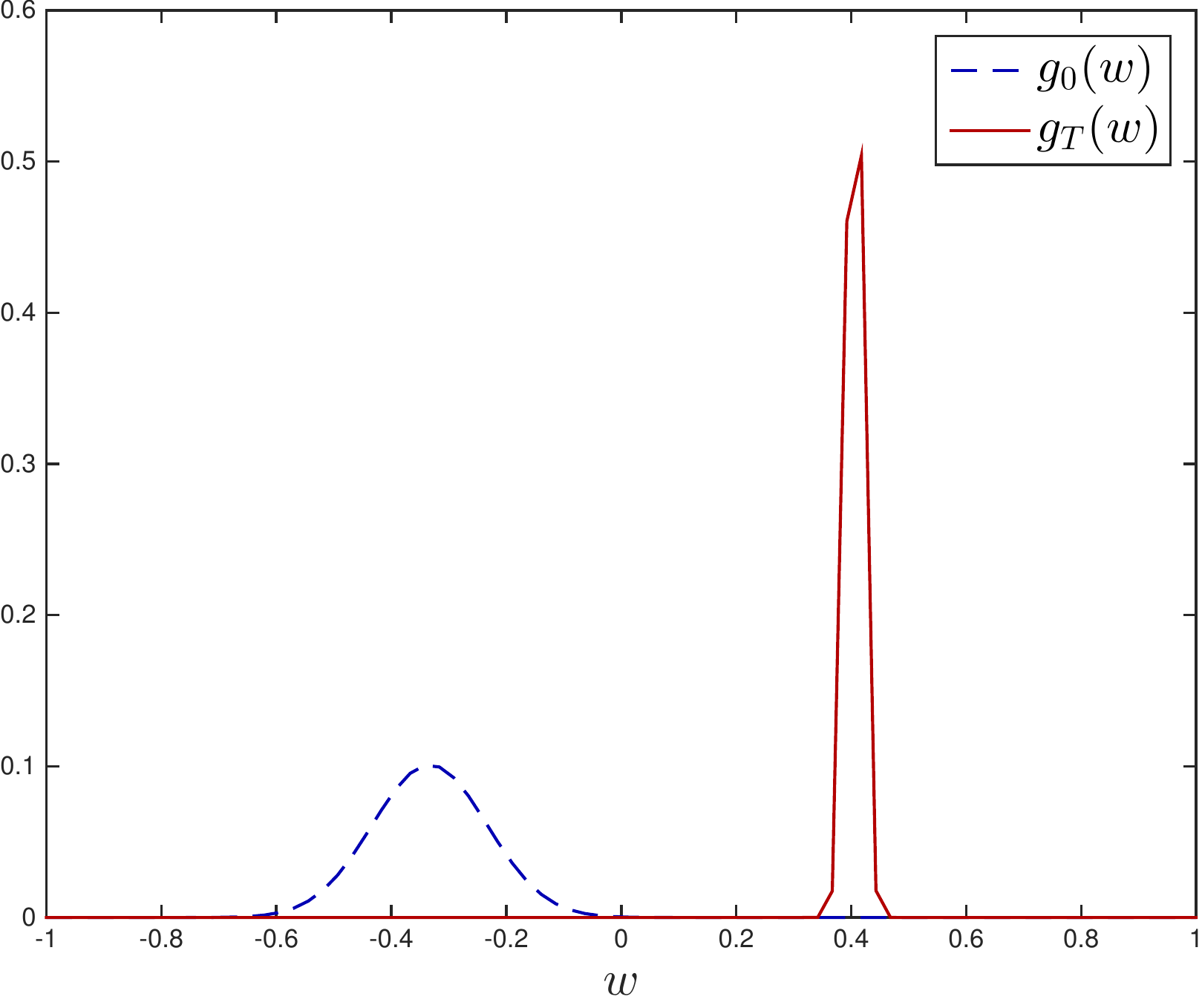}
\hskip +0.5cm
\includegraphics[scale = 0.3]{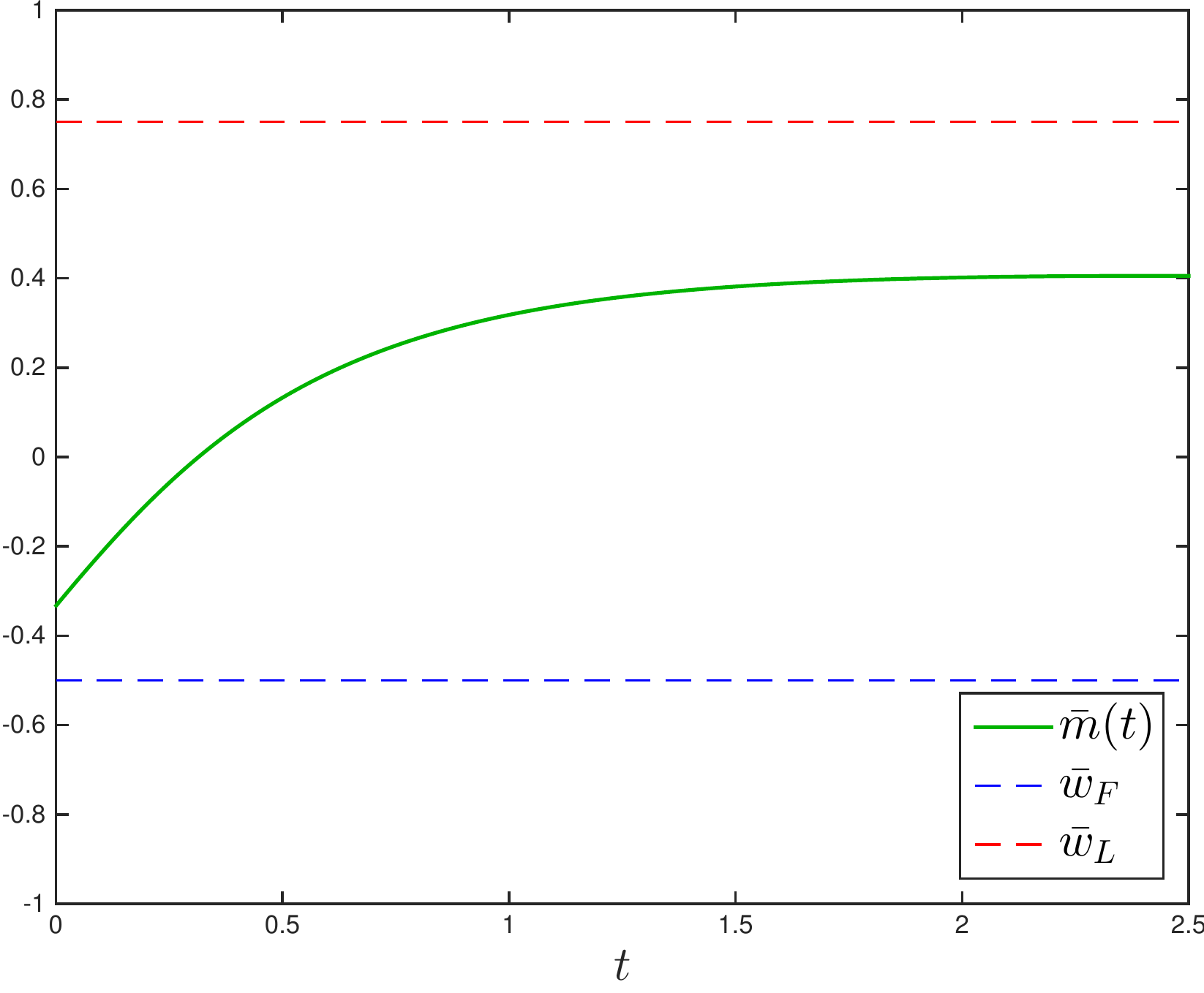}
\caption{Test \#3. On the left-hand side final and initial state of the marginal distribution $g(w,t)$ of the opinion, the green line represents the evolution of the average opinion $\bar{m}(t)$, the red and blue dashed lines represent respectively the opinions $\bar{w}_L=0.75$ and $\bar{w}_F=-0.5$, which are the two leading opinions of the initial data \eqref{eq:T2f0}.}\label{Fig:F4}
\end{figure}

\subsection{Test \#4}
In the last test case, we consider the Hegselmann-Krause model, \cite{HK}, known also as bounded confidence model, where agents interact only with agents whose opinion lays within a certain
range of confidence. Thus we define the following compromise function
\begin{align}\label{eq:HK}
P(w,w_*;c,c_*) = \chi_{\{|w-w_*|\leq\Delta(c)\}}(w_*),
\end{align}
where $\Delta(c)$ is the confidence level and we assume that in general it depends on the number of connections.
We define the initial data 
\begin{align}\label{eq:initD}
f_0(w,c) = \frac{1}{2}\rho_\infty(c),
\end{align}
therefore the opinion is uniformly distributed on the interval $I=[-1,1]$ and it decreases along $c\in[0,\cm]$ following $\rho_\infty(c)$, as in \eqref{prop:st}, with parameters defined in Table \ref{tab:all_parameters} and $D(w,c) = 1-w^2$. 
The evolution is performed through the Chang-Cooper type scheme with $\Delta w = 2/N$, with $N = 80$. We consider the evolution of the system in the time interval $[0,T]$, with $T=100$.

We study first a confidence level independent from the number of connections, therefore  we set $\Delta(c)=\Delta =0.25$. In Figure \ref{Fig:F5} the evolution of the initial data \eqref{eq:initD} shows the classical behavior of  Hegselmann-Krause model, where opinions' clusters emerge.
\begin{figure}[t]
\centering
\subfigure[t=0]{\includegraphics[scale= 0.3]{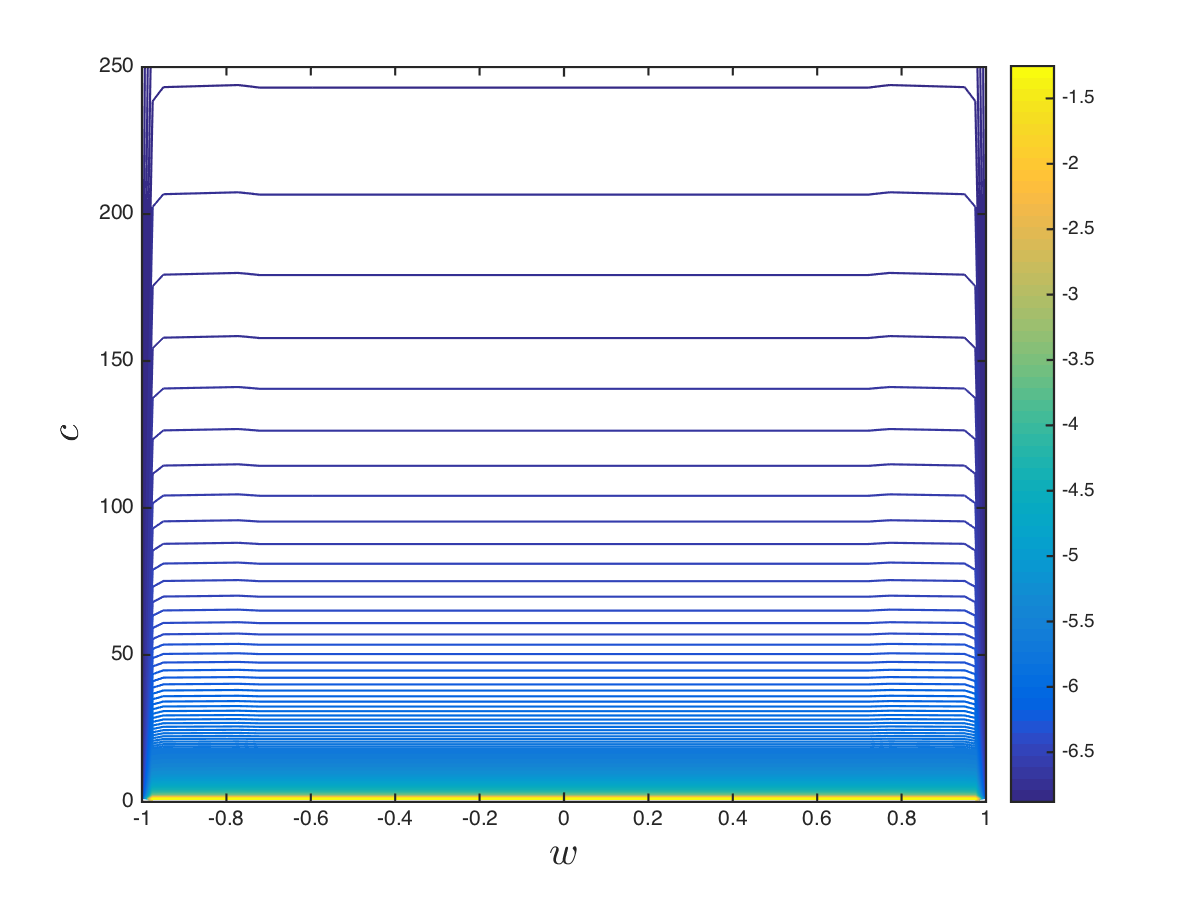}}
\hskip +0.2cm
\subfigure[t=10]{\includegraphics[scale= 0.3]{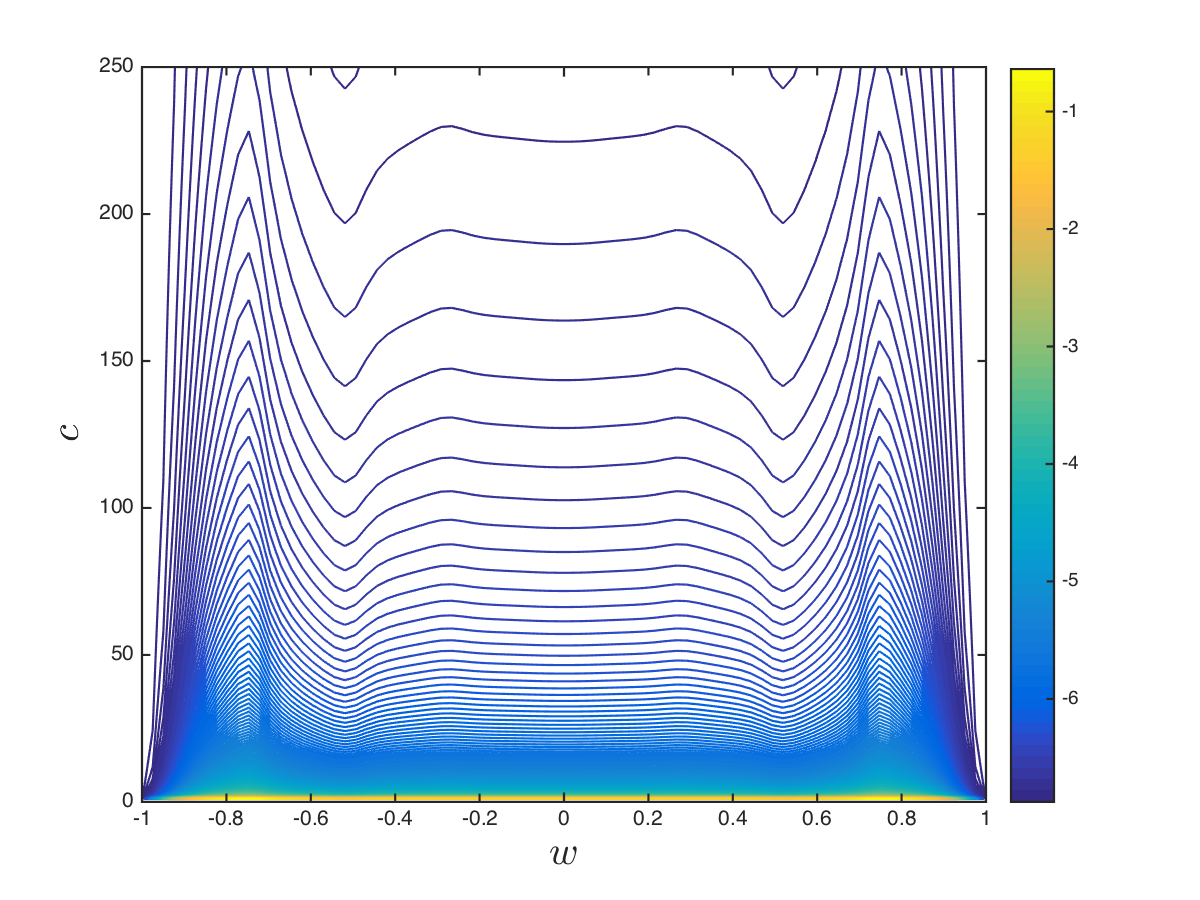}}
\\
\subfigure[t=50]{\includegraphics[scale= 0.3]{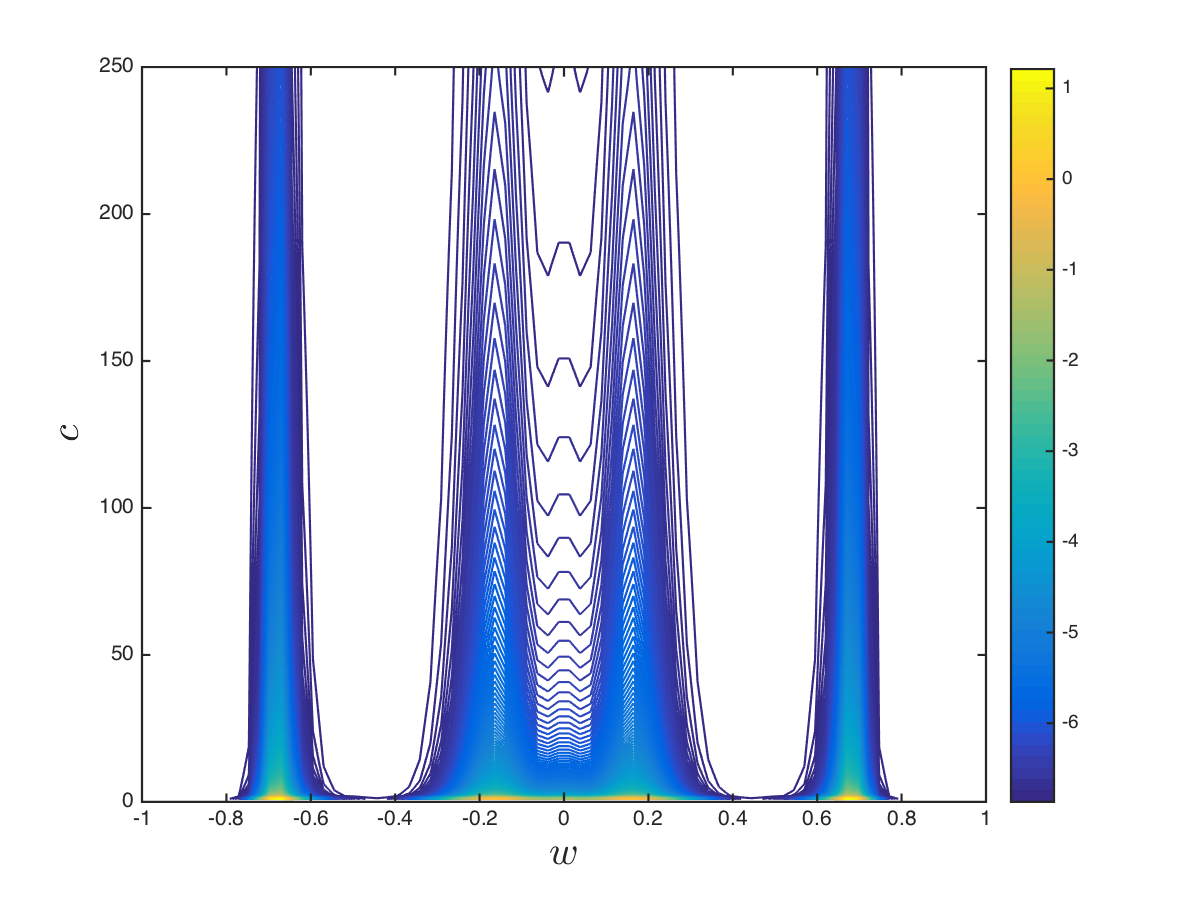}}
\hskip +0.2cm
\subfigure[t=100]{\includegraphics[scale= 0.3]{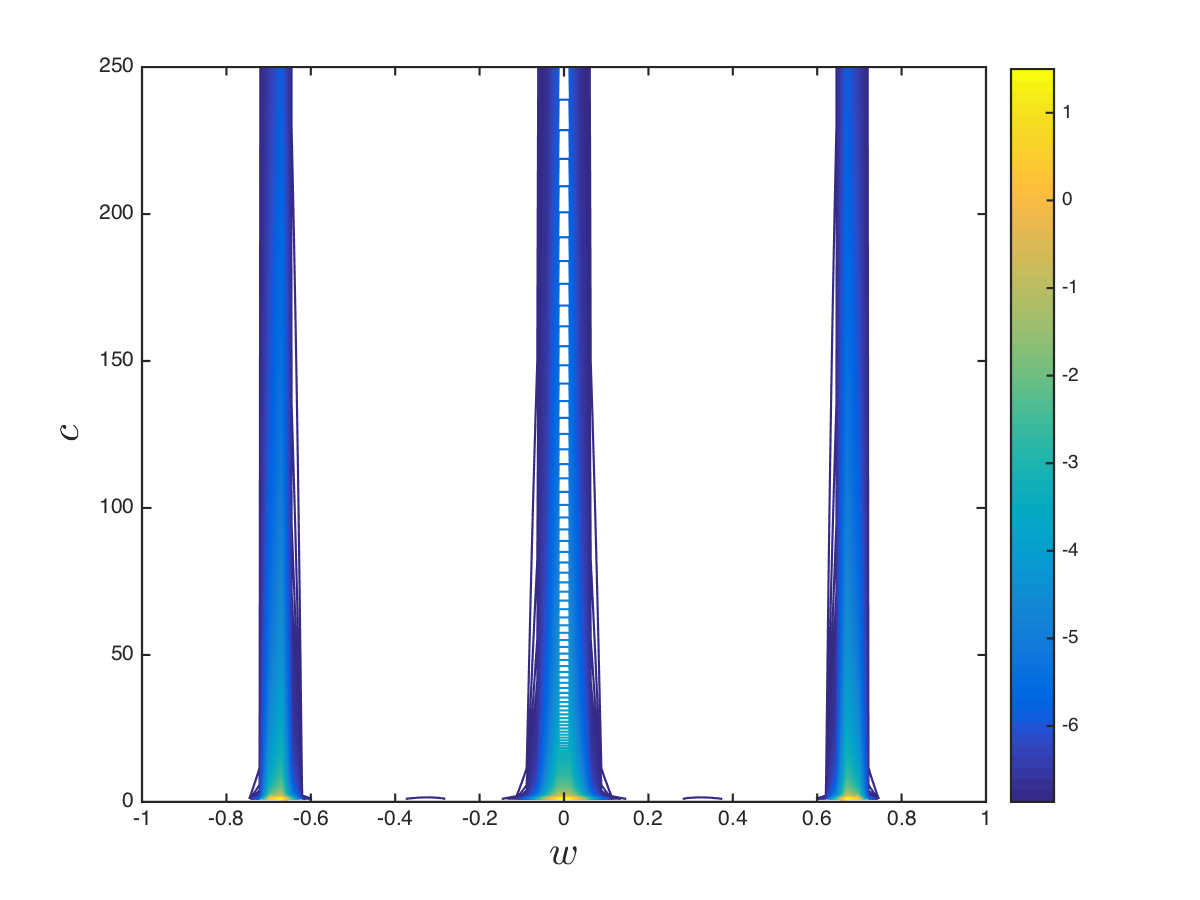}}
\caption{Test \#4. Evolution of the Fokker-Planck model \eqref{eq:FP} where the interaction are described by \eqref{eq:HK}, with $\Delta=0.25$, in the time frame $[0,T]$, with $T = 100$.
The evolution shows the emergence of three main opinion clusters, which are not affected by the connectivity variable. 
(Note: In order to better show its evolution, we represent the solution as $\log(f(w,c,t)+\epsilon)$, with $\epsilon = 0.001$.)
}\label{Fig:F5}
\end{figure}

\begin{figure}[h!]
\centering
\subfigure[t=0]{\includegraphics[scale= 0.3]{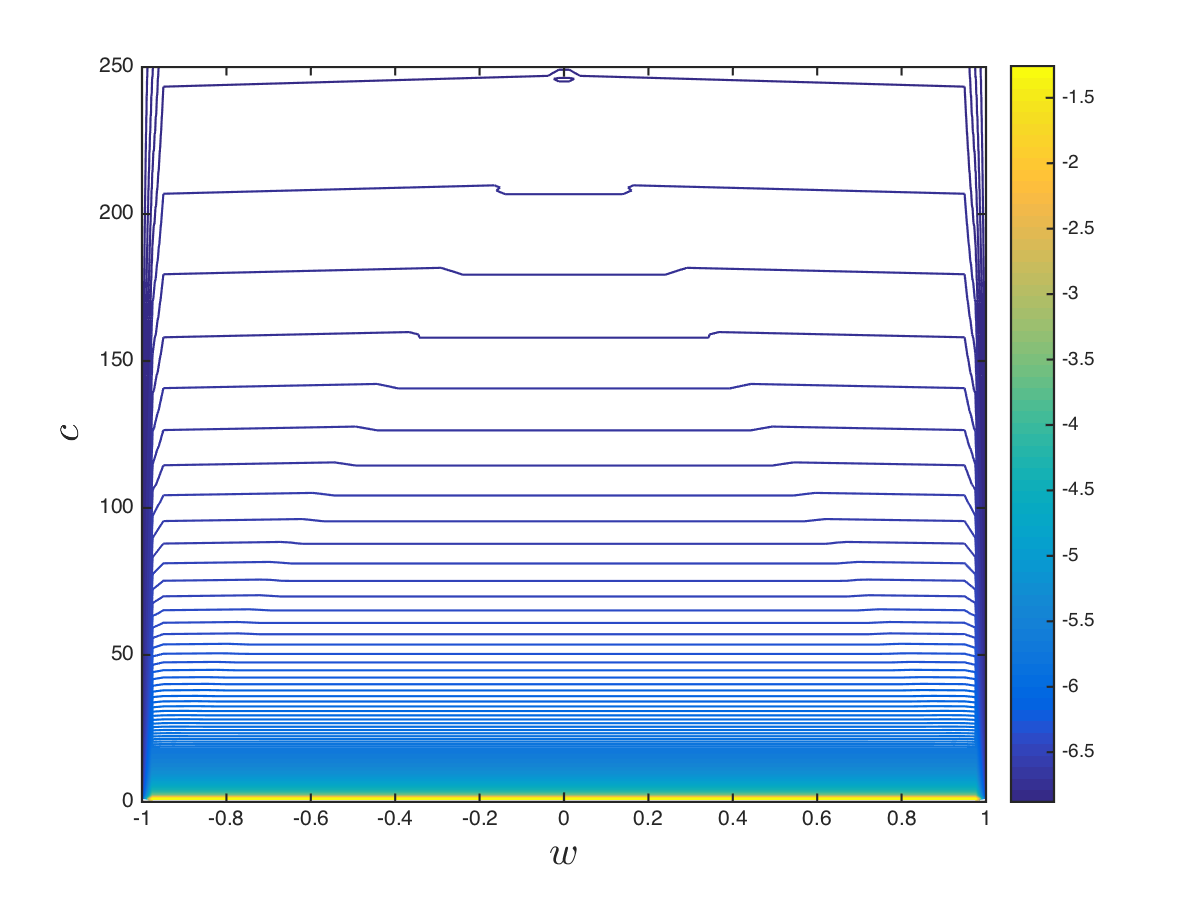}}
\hskip +0.2cm
\subfigure[t=10]{\includegraphics[scale= 0.3]{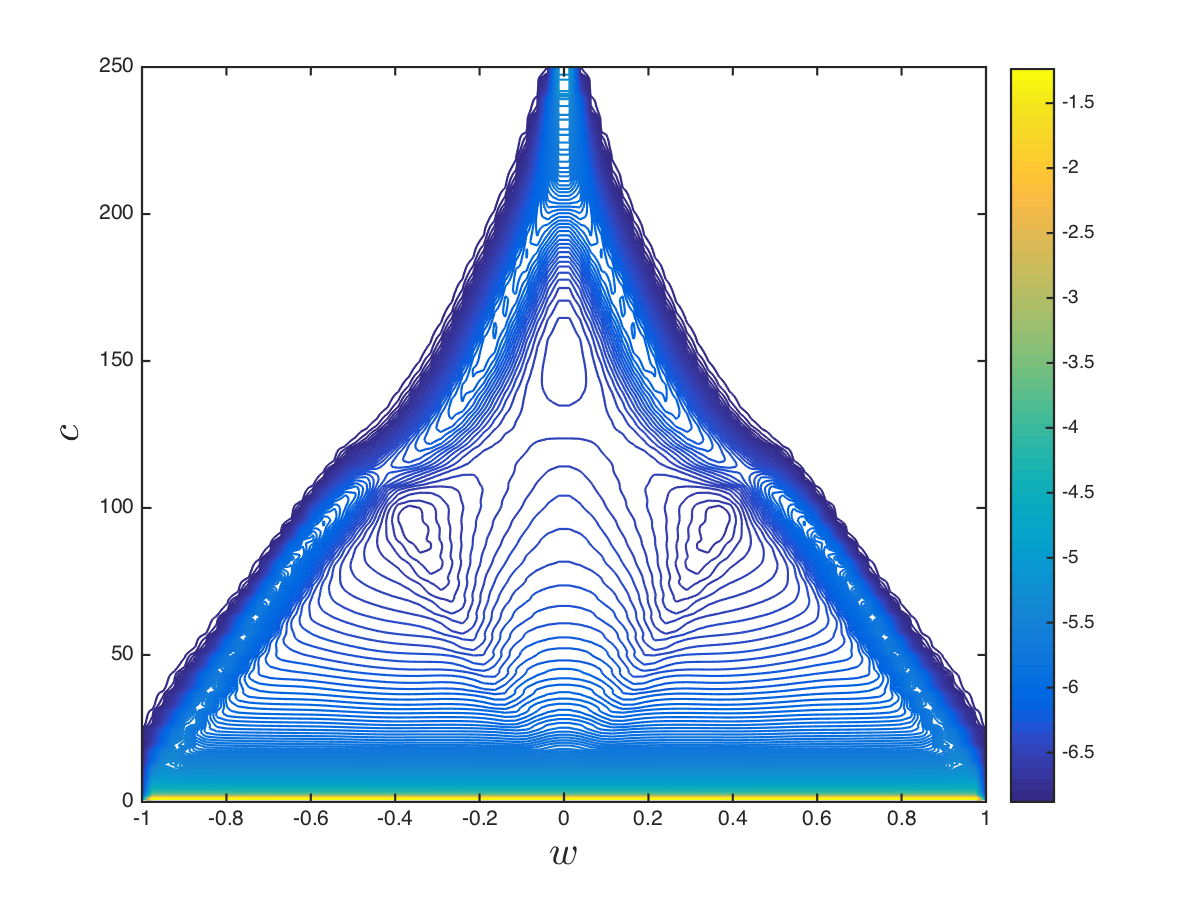}}
\\
\subfigure[t=50]{\includegraphics[scale= 0.3]{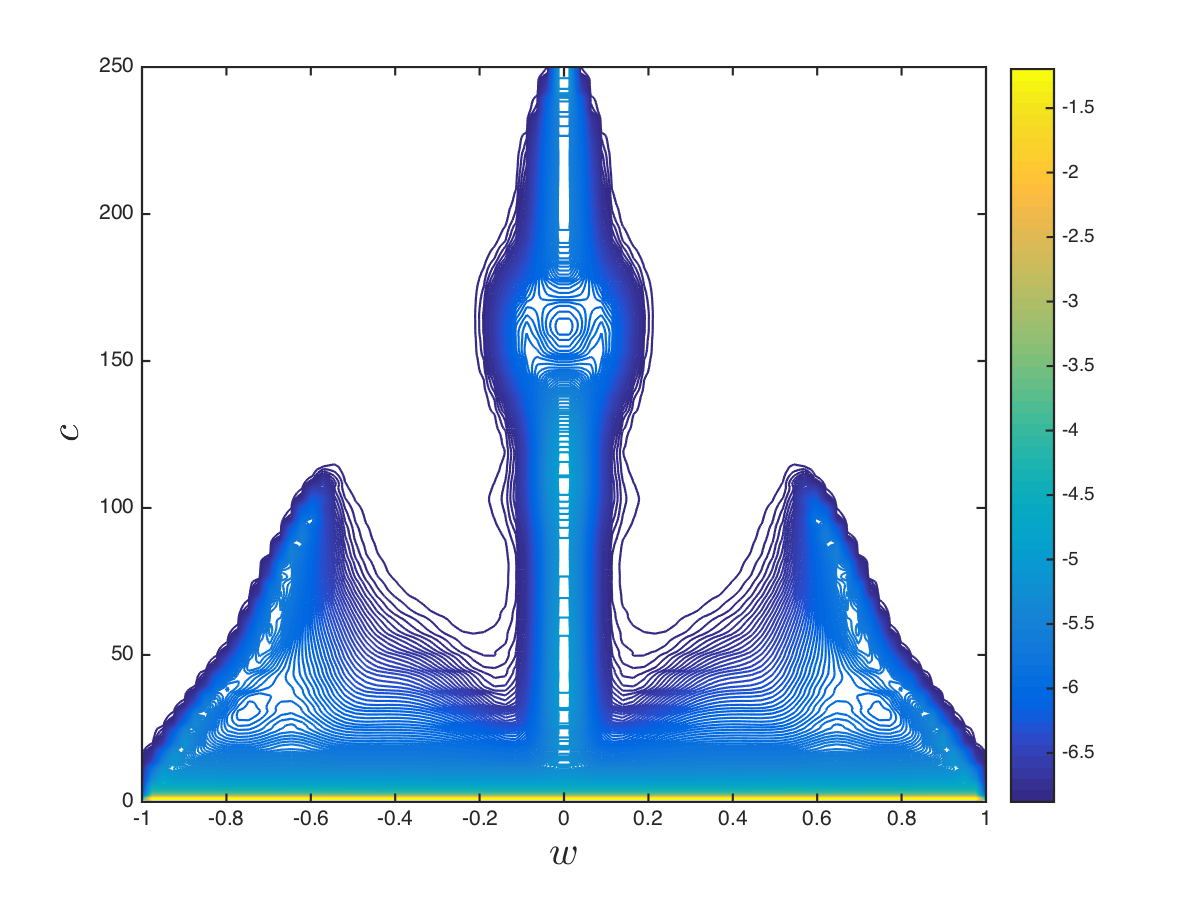}}
\hskip +0.2cm
\subfigure[t=100]{\includegraphics[scale= 0.3]{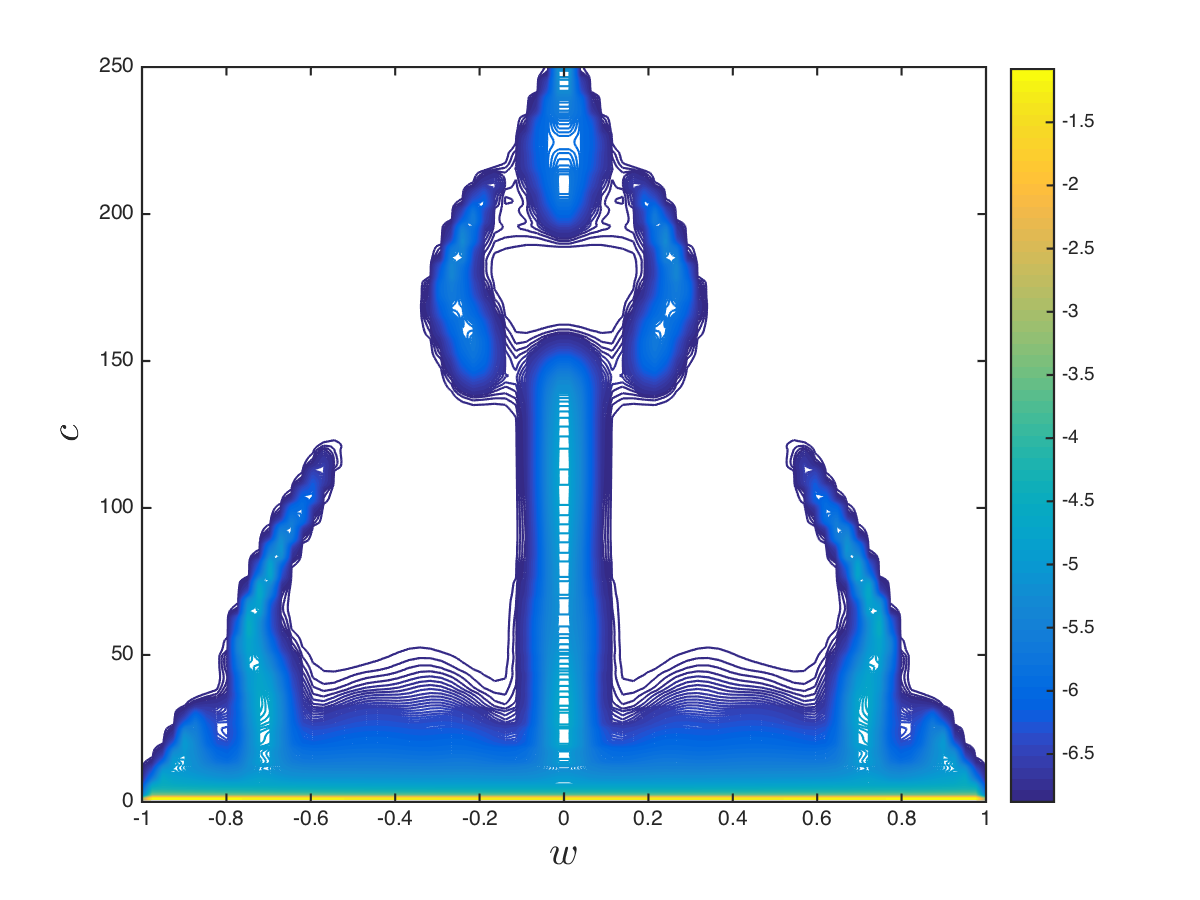}}
\caption{Test \#4. Evolution of the solution of the Fokker-Planck model \eqref{eq:FP}, where the interaction are described by \eqref{eq:HK}, with $\Delta(c)=d_0c/\cm$, and $d_0=1.01$, in the time frame $[0,T]$, with $T = 100$. The choice of $\Delta(c)$ reflects in the heterogeneous emergence of clusters with respect to the connectivity level: for higher level of connectivity consensus is reached, instead for lower levels of connectivity multiple opinion clusters are present. 
(Note: In order to better show its evolution, we represent the solution as $\log(f(w,c,t)+\epsilon)$, with $\epsilon = 0.001$.)
}\label{Fig:F6}
\end{figure}

Next, we perform a second computation where  the confidence bound depends on the number of connections as follows
\begin{align}\label{eq:Deltavar}
\Delta(c) = d_0\frac{c}{\cm}.
\end{align}
This choice reflects a behavior where agents with higher number of connections are prone to larger level of confidence.  We report in Figure \ref{Fig:F6} the evolution of \eqref{eq:initD}, where $\Delta(c)$ creates an heterogeneous emergence of clusters with respect to the connectivity level: for higher level of connectivity consensus is reached, since the bounded confidence level is larger, instead for lower levels of connectivity multiple clusters appears, up to the limiting case $c=0$, where the opinions are not influenced by the consensus dynamics.


\section{Conclusions}
The construction of kinetic models and numerical methods for the spreading of opinions over time dependent large scale networks has been considered. First we have introduced a Boltzmann model for the opinion interactions based on a preferential attachment process for the creation of new connections between agents. If the preferential attachment is independent from the agents' opinion the large time behavior of the network can be described analytically and originates both Poisson type distributions as well as truncated power laws. Next we derived the corresponding mean-field approximation which permits to have a deeper understanding of the asymptotic behavior of the opinion dynamics and to compute analytically stationary states in simplified situations. Robust numerical methods, based on stochastic as well as deterministic techniques have been introduced and their property discussed. The results, for various test cases, show the validity of the present approach. Several extensions of the present approach are possible. First one may consider the case where the number of agents in the network is not conserved, as it happens in a real social network. Moreover, control problems with the aim to force consensus over the network may be introduced.   

\newpage
\newpage
\appendix
\section{Properties of the network}\label{app:A}
In this section we report explicit computations concerning the properties of the network operator $\N[\cdot]$.
\subsection{Conservation of the total number of agents}\label{app:mass}
First we show that 
\begin{equation}\label{eq:sum_mass}
\sum_{c=0}^{\cm} \N[f(w,c,t)]\,dw=0.
\end{equation}
From \eqref{eq:master2} we have
\begin{equation*}\begin{split}
\sum_{c=1}^{\cm-1}\N[f(w,c,t)] =&  -\dfrac{2\Ur(f;w)}{\gamma+\beta} \sum_{c=1}^{\cm-1}[(c+1+\beta)f(w,c+1,t)-(c+\beta)f(w,c,t)] \\
&-\dfrac{2\Ua(f;w)}{\gamma+\alpha}\sum_{c=1}^{\cm-1}[(c-1+\alpha)f(w,c-1,t)-(c+\alpha)f(w,c,t)] \\
=&- \dfrac{2\Ur(f;w)}{\gamma+\beta}\left[(\cm+\beta)f(w,\cm,t)-(\beta+1)f(w,1,t)\right]\\
&+\dfrac{2\Ua(f;w)}{\gamma+\alpha}\left[(\cm-1+\alpha)f(w,\cm-1,t)-\alpha f(w,0,t)\right].
\end{split}\end{equation*}
Using the boundary conditions \eqref{eq:BD_master2} we have the desired property.
As a consequence we obtain the conservation of the total number of agents
\be
\dfrac{d}{dt}\sum_{c=0}^{\cm}\p(c,t) =-\int_{I}\sum_{c=0}^{\cm} \N[f(w,c,t)]\,dw =0.
\ee

\subsection{Mean density of connectivity}\label{app:mean_evo}
Next we consider the evolution of the mean density of connectivity $\gamma(t)$. 
We prove that
\begin{equation}
\begin{split}
\sum_{c=0}^{\cm}c \N[f(w,c,t)]&=2\Ur(f;w)\frac{\gamma_f+\beta g(w,t)}{\gamma+\beta}-2\Ua(f;w)\frac{\gamma_f+\alpha g(w,t)}{\gamma+\alpha}\\
&-\dfrac{2\Ur(f;w)}{\gamma+\beta} \beta f(w,0,t)
+\dfrac{2\Ua(f;w)}{\gamma+\alpha}(\cm+\alpha)f(w,\cm,t).
\end{split}
\label{eq:gamma_ev}
\end{equation}
In fact, thanks to \eqref{eq:master2}, in the internal points we have 
\begin{equation}\begin{split}\label{eq:sum_1}
-\sum_{c=1}^{\cm-1}c\N[f(w,c,t)] = &\dfrac{2\Ur(f;w)}{\gamma+\beta}\sum_{c=1}^{\cm-1}c\left[(c+1+\beta)f(w,c+1,t)\right.\\
&\left.-(c+\beta)f(w,c,t)\right]\\
+&\dfrac{2\Ua(f;w)}{\gamma+\alpha}\sum_{c=1}^{\cm-1}c\left[(c-1+\alpha)f(w,c-1,t)\right.\\
&\left.-(c+\alpha)f(w,c,t)\right].
\end{split}\end{equation}
We observe that the first sum in \eqref{eq:sum_1} is equal to
\begin{equation}
\begin{split}
&\dfrac{2\Ur(f;w)}{\gamma+\beta} \sum_{c=1}^{\cm-1}[c(c+1)f(w,c+1,t)-c^2 f(w,c,t)] \\
&\qquad\qquad+\dfrac{2\Ur(f;w)}{\gamma+\beta}\beta \sum_{c=1}^{\cm-1}[c f(w,c+1,t)-c f(w,c,t)],\\
&= \dfrac{2\Ur(f;w)}{\gamma+\beta}\left[ \cm(\cm+\beta) f(w,\cm,t) 
-(\gamma_f+\beta g(w,t)) + \beta f(w,0,t)\right].
\end{split}
\end{equation}
Similarly the second sum in \eqref{eq:sum_1} is equal to 
\begin{equation}
\begin{split}
&\dfrac{2\Ua(f;w)}{\gamma+\alpha} \sum_{c=1}^{\cm-1}[c(c-1)f(w,c-1,t)-c^2 f(w,c,t)] \\
&\qquad\qquad+\dfrac{2\Ua(f;w)}{\gamma+\alpha}\alpha \sum_{c=1}^{\cm-1}[c f(w,c-1,t)-c f(w,c,t)],\\
&= \dfrac{2\Ua(f;w)}{\gamma+\alpha}\left[\gamma_f(w,t)-\cm(\cm-1) f(w,\cm-1,t)-\cm f(w,\cm,t) \right]\\
&\qquad\qquad+\dfrac{2\Ua(f;w)}{\gamma+\alpha}\alpha\left[g(w,t)-\cm f(w,\cm-1,t)-f(w,\cm,t) \right].
\end{split}
\end{equation}
Using the boundary condition for $c=\cm$ (since the one at $c=0$ does not play any role here) we have 
\begin{equation}\begin{split}
 -\cm \N[f(w,\cm,t)]=&-\cm\dfrac{2\Ur(f;w)}{\gamma+\beta}(\cm+\beta)f(w,\cm,t)\\
 & +\cm\dfrac{2\Ua(f;w)}{\gamma+\alpha}(\cm-1+\alpha)f(w,\cm-1,t),
\end{split}\end{equation}
which together with the above computations yields (\ref{eq:gamma_ev}).

As a consequence we have
\begin{equation}
\begin{split}
&\dfrac{d}{dt}\gamma(t)=-2\int_I\Ur(f;w)\frac{\gamma_f+\beta g(w,t)}{\gamma+\beta}dw+2\int_I\Ua(f;w)\frac{\gamma_f+\alpha g(w,t)}{\gamma+\alpha}dw\\
&\,\,\,\,\,+\dfrac{2\beta}{\gamma+\beta}\int_I \Ur(f;w) f(w,0,t)\,dw
-\dfrac{2(\cm+\alpha)}{\gamma+\alpha}\int_I \Ua(f;w) f(w,\cm,t)\,dw
\end{split}
\end{equation}
\subsection{Asymptotic behavior}\label{app:st}
In the following we compute the explicit stationary solution $\ps(c)$ for the evolution of $\p(c,t)$ in the linear case with $\Ua=\Ur$, $\beta=0$ and assuming
\[
\sum_{c=0}^{\cm} \ps(c) = 1, \qquad \sum_{c=0}^{\cm} c\ps(c) = \gamma_{\infty}.
\]
Note that in the sequel, for notation simplicity, we denote by $\gamma=\gamma_{\infty}$ the asymptotic stationary value reached by the mean density of connectivity.
\begin{proposition}
For each $c\in {\mathcal C}$ the stationary solution to (\ref{eq:Ldef}) or equivalently
\be\label{prop:st}
(c+1)\ps(c+1) = \dfrac{1}{\gamma+\alpha}\left[(c(2\gamma+\alpha)+\gamma\alpha)\ps(c)-\gamma(c-1+\alpha)\ps(c-1)\right]
\ee
is given by 
\be\label{prop:sol}
\ps(c) = \left(\dfrac{\gamma}{\gamma+\alpha}\right)^c \dfrac{1}{c!}\alpha(\alpha+1)\cdots (\alpha+c-1)\ps(0)
\ee
where
\be
\ps(0) = \left(\dfrac{\alpha}{\alpha+\gamma}\right)^{\alpha}.
\ee
\end{proposition}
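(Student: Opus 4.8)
The plan is to recognize that the stationary equation $\L[\ps]=0$, with $\Ur=\Ua=V$ and $\beta=0$, is the steady state of a one-dimensional birth--death chain on $\mathcal C$, and to exploit the fact that such chains satisfy detailed balance at stationarity. This collapses the second-order recurrence \eqref{prop:st} into a first-order one, which can then be solved by direct iteration.

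First I would rewrite the operator in \eqref{eq:master}--\eqref{eq:BD_master} (with $\beta=0$, $\Ur=\Ua=V$) in birth--death form, identifying the adding (birth) rate $r_+(c)=\frac{2V}{\gamma+\alpha}(c+\alpha)$ and the removal (death) rate $r_-(c)=\frac{2V}{\gamma}c$; note that $r_-(0)=0$ encodes that connections cannot be removed from an agent with no connections. Introducing the probability current $J(c)=r_+(c)\ps(c)-r_-(c+1)\ps(c+1)$, a short computation shows $\L[\ps(c)]=J(c)-J(c-1)$ at the interior points and, using the boundary relation \eqref{eq:BD_master} at $c=0$, that $\L[\ps(0)]=J(0)$ (i.e.\ $J(-1)=0$). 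Hence $\L[\ps]=0$ forces $J$ to be constant in $c$, while the boundary value $J(0)=0$ gives $J(c)\equiv 0$ throughout. This vanishing-current (detailed balance) identity is the crux of the argument.

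Detailed balance $J(c)=0$ is exactly the first-order recurrence
\[
(c+1)\ps(c+1)=\frac{\gamma}{\gamma+\alpha}\,(c+\alpha)\,\ps(c),
\]
whose equivalence with \eqref{prop:st} I would confirm by substituting it together with its shift back into \eqref{prop:st}. Iterating this relation from $c=0$ telescopes to
\[
\ps(c)=\ps(0)\left(\frac{\gamma}{\gamma+\alpha}\right)^c\frac{\alpha(\alpha+1)\cdots(\alpha+c-1)}{c!},
\]
which is \eqref{prop:sol}. Finally I would fix $\ps(0)$ from the normalization $\sum_c\ps(c)=1$: recognizing the coefficients as those of a negative binomial law and summing via the generalized binomial series $\sum_{c\ge 0}\binom{\alpha+c-1}{c}x^c=(1-x)^{-\alpha}$ with $x=\gamma/(\gamma+\alpha)$ yields $\ps(0)=\left(\frac{\alpha}{\alpha+\gamma}\right)^{\alpha}$. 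Here the series runs over all $c\ge 0$, so for finite $\cm$ this is the large-$\cm$ approximation consistent with the fast-decay assumption on the tail. As a consistency check one verifies that the mean $\sum_c c\,\ps(c)$ of the resulting law equals $\gamma$, matching the defining constraint on the stationary connectivity.

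The main obstacle is precisely the reduction from the second-order recurrence to the first-order one; once the current $J$ is introduced and shown to vanish identically via the reflecting boundary at $c=0$, the remaining steps are elementary telescoping and a standard binomial summation. An alternative, purely computational route---guessing \eqref{prop:sol} and verifying \eqref{prop:st} directly by induction on $c$---bypasses the birth--death interpretation but is more tedious and less illuminating.
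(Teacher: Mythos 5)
Your proof is correct, but it takes a genuinely different route from the paper's. The paper proves \eqref{prop:sol} by induction on $c$: the base case $\ps(1)=\frac{\gamma}{\gamma+\alpha}\alpha\,\ps(0)$ comes from the boundary relation \eqref{eq:BD_master} at $c=0$, and the inductive step substitutes the ansatz into the second-order recurrence \eqref{prop:st} and simplifies the bracket $\frac{c(2\gamma+\alpha)+\gamma\alpha}{c(\gamma+\alpha)}-1=\frac{\gamma(c+\alpha)}{c(\gamma+\alpha)}$ to produce the formula at $c+1$; the value of $\ps(0)$ is simply stated. Your argument instead identifies the structural reason the product form appears: writing $\L[\ps(c)]=J(c)-J(c-1)$ with the current $J(c)=r_+(c)\ps(c)-r_-(c+1)\ps(c+1)$ and using the reflecting boundary at $c=0$ to force $J\equiv 0$ reduces the problem to the first-order detailed-balance relation $(c+1)\ps(c+1)=\frac{\gamma}{\gamma+\alpha}(c+\alpha)\ps(c)$, which telescopes immediately. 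This buys two things the paper's proof does not deliver: it shows that \emph{every} stationary solution must have this form (the induction only verifies that the stated formula is \emph{a} solution, taking the boundary-derived base case as given), and it actually derives the normalization $\ps(0)=\bigl(\alpha/(\alpha+\gamma)\bigr)^{\alpha}$ via the negative-binomial series rather than asserting it. Your explicit caveat that the normalization is exact only in the large-$\cm$ limit is also more careful than the paper, which glosses over the truncation; note that the detailed-balance solution itself satisfies the boundary relation at $c=\cm$ exactly, since it is equivalent to $J(\cm-1)=0$, so only the constant is affected. The inductive verification you mention as an alternative is precisely what the paper does.
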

\begin{proof}
Let us show (\ref{prop:sol}) by induction. First, from the boundary condition (\ref{eq:BD_master}) at $c=0$ we immediately have
\be
\ps(1) = \left(\dfrac{\gamma}{\gamma+\alpha}\right)\alpha\ps(0).
\ee
Now let us assume that \eqref{prop:sol} holds true for $c$, we want to prove that
\be
\ps(c+1) = \left(\dfrac{\gamma}{\gamma+\alpha}\right)^{c+1} \dfrac{1}{(c+1)!}\alpha(\alpha+1)\cdots(\alpha+c)\ps(0).
\ee
From \eqref{prop:st} we have
\begin{equation*}\begin{split}
(c+1)\ps(c+1) =& \dfrac{1}{\gamma+\alpha}\left[(c(2\gamma+\alpha)+\gamma\alpha)\left(\dfrac{\gamma}{\gamma+\alpha}\right)^c \dfrac{1}{c!}\alpha(\alpha+1)\cdots (\alpha+c-1)\ps(0) \right. \\
& \left. -\gamma(c-1+\alpha)\left(\dfrac{\gamma}{\gamma+\alpha}\right)^{c-1}\dfrac{1}{(c-1)!}\alpha\cdots (\alpha+c-2)\ps(0)\right] \\
=& \left(\dfrac{\gamma}{\gamma+\alpha}\right)^{c}\dfrac{1}{(c-1)!}\alpha\cdots(\alpha+c-1)\left[\dfrac{c(2\gamma+\alpha)+\gamma\alpha}{c(\gamma+\alpha)}-1\right]\ps(0) \\
=& \left(\dfrac{\gamma}{\gamma+\alpha}\right)^{c+1}\dfrac{1}{c!}\alpha\cdots(\alpha+c-1)(\alpha+c)\ps(0).
\end{split}\end{equation*}
By direct inspection one verifies that also the boundary condition (\ref{eq:BD_master}) at $c=\cm$ is verified. 
\end{proof}

\section{Properties of the implicit-explicit scheme}\label{app:B}
Let us consider the following implicit-explicit discretization of \eqref{eq:FP} 
\be
\dfrac{f_i^{n+1}-f_i^n}{\Delta t}+\N[f^{n+1}_i]=\dfrac{\mathcal{F}_{i+1/2}^n-\mathcal{F}_{i-1/2}^n}{\Delta w},
\label{eq:imex}
\ee
where $f_i^{n}=f_i^n(c)$, endowed with a positive initial condition $f_i^0(c)={f}_{i}(c,0)$. The main motivation for the time discretization above is related to the severe stability constraints of an explicit scheme applied to the network operator which would require the time step to be $O(1/\cm)$ where $\cm \gg 1$.   
\subsection{Positivity}
In order to study the nonnegativity property of scheme (\ref{eq:imex}) it is convenient to rewrite it as a sequence of two steps
\be
\begin{split}
f_i^{n+1/2} &= f_i^n+\Delta t \dfrac{\mathcal{F}_{i+1/2}^n-\mathcal{F}_{i-1/2}^n}{\Delta w}\\
f_i^{n+1}&=f_i^{n+1/2}-\Delta t \N[f^{n+1}_i].
\end{split}
\label{eq:imex2}
\ee
The first step involves the Chang-Cooper type scheme and reads 
\begin{equation}\begin{split}\label{eq:system_2_disc}
f_i^{n+1/2} &= f_i^n+\dfrac{\Delta t}{\Delta w}\Bigg[\Big ( (1-\delta_{i+1/2})B_{i+1/2}^n+\dfrac{1}{\Delta w}C_{i+1/2}  \Big) f_{i+1}^n\\
& - \Big ((1-\delta_{i-1/2})B_{i-1/2}^n -\delta_{i+1/2}B_{i+1/2}^n \Big)f^n_i-\dfrac{1}{\Delta w}\Big(C_{i+1/2}+C_{i-1/2}\Big)f^n_i \\
& -\Big(\delta_{i-1/2}B_{i-1/2}^n-\dfrac{1}{\Delta w}C_{i-1/2}\Big)f_{i-1}^n\Bigg],
\end{split}\end{equation}
where $B_{i+ 1/2}^n, C_{i+ 1/2}$ are given by  
\be\begin{split}
B_{i+1/2}^n(c) &= \dfrac{D^2_{i+1/2}}{\Delta w}\int_{w_i}^{w_{i+1}}\frac{1}{D(w,c)^2}( \mathcal{P}[f](w,c,t^n)+\sigma^2 D'(w,c)D(w,c))dw, \\
C_{i+1/2}  &= \dfrac{\sigma^2}{2}D_{i+1/2}^2\ge 0.
\end{split}\ee
From the definition of the weight functions $\delta_{i+1/2}$ in (\ref{eq:weights}), the coefficients of $f^n_{i+1},f_{i-1}^n$, satisfy
\be\begin{split}
(1-\delta_{i+1/2})B_{i+1/2}^n+\dfrac{1}{\Delta w}C_{i+1/2} \geq 0,  \\
-\delta_{i-1/2}B_{i-1/2}^n+\dfrac{1}{\Delta w}C_{i-1/2} \geq 0. 
\end{split}\ee
In fact, setting $x=B_{i+1/2}^n\Delta w/C_{i+1/2}$, $y=B_{i-1/2}^n\Delta w/C_{i-1/2}$  the two inequalities are equivalent to show that $\forall\, x,y \in \mathbb{R}$
\be
x\left(1-\frac{1}{1-e^x}\right) \geq 0,\qquad
\frac{y}{e^y-1} \geq 0, 
\ee
which follow from the properties of the exponential function.

Then, in order to ensure the nonnegativity of the scheme the time step must satisfy the restriction
\be\label{eq:cond_dt_1}
\Delta t\le \frac{\Delta w}{\nu^n},
\ee
where
\be
\nu^n=\max_i\Big\{(1-\delta_{i-1/2})B^n_{i-1/2}-\delta_{i+1/2}B^n_{i+1/2}+\dfrac{1}{\Delta w}C_{i+1/2}+\dfrac{1}{\Delta w}C_{i-1/2}\Big\}.
\ee
Now, since the functions $D(w,c),P(w,w_*;c,c_*)$ are bounded for all $w\in I, c\in\mathcal{C}$  we have that $$|B^n_{i+1/2}|\le 2+\sigma^2 M,\qquad C_{i+1/2} \leq \sigma^2/2$$ 
where $M=\max_i|D'_{i+1/2}|$, and the condition \eqref{eq:cond_dt_1} simplifies to
\be
\Delta t \le \frac12 \frac{\Delta w}{\left(2+\sigma^2 M+\frac{\sigma^2}{2\Delta w}\right)}.
\label{eq:time1}
\ee
Therefore we have shown
\begin{proposition}
Under the time step restriction (\ref{eq:time1}) the first step in (\ref{eq:imex2}) preserves nonnegativity, namely $f_i^{n+1/2}(c)\geq 0$ if $f_i^n(c)\geq 0$, $i=1,\ldots,N$, $c \in {\mathcal C}$.
\end{proposition}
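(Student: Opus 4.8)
The plan is to read the first half-step in \eqref{eq:imex2} as an explicit three-point update and to show that it is a \emph{nonnegative} combination of $f_{i-1}^n$, $f_i^n$ and $f_{i+1}^n$, so that nonnegativity propagates. Concretely, I would start from the rewritten form \eqref{eq:system_2_disc}, which expresses $f_i^{n+1/2}$ through coefficients built from $\delta_{i\pm1/2}$, $B_{i\pm1/2}^n$ and $C_{i\pm1/2}$. Since $f_i^n(c)\ge 0$ by hypothesis, it suffices to check that each of the three coefficients is nonnegative, and the claim follows immediately.

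For the two off-diagonal coefficients (those multiplying $f_{i+1}^n$ and $f_{i-1}^n$) I would exploit the explicit form of the weights \eqref{eq:weights}. Introducing the rescaled variables $x=B_{i+1/2}^n\Delta w/C_{i+1/2}$ and $y=B_{i-1/2}^n\Delta w/C_{i-1/2}$, which coincide with $\lambda_{i+1/2}$ and $\lambda_{i-1/2}$, and substituting $\delta=1/\lambda+1/(1-e^{\lambda})$, the nonnegativity of the coefficient of $f_{i+1}^n$ collapses to $x\,e^x/(e^x-1)\ge 0$ and that of $f_{i-1}^n$ to $y/(e^y-1)\ge 0$. Both hold for every real argument because $t\mapsto t/(e^t-1)$ is everywhere positive (with limit $1$ at $t=0$); the rescaling does not flip signs since $C_{i\pm1/2}=\tfrac{\sigma^2}{2}D_{i\pm1/2}^2\ge0$. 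This reduction to a single exponential inequality is the part that genuinely uses the special Chang--Cooper construction, so it is where I would be most careful with signs.

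The diagonal coefficient, multiplying $f_i^n$, has the form $1-\tfrac{\Delta t}{\Delta w}(\cdots)$, and its nonnegativity is not automatic: it forces the time-step restriction. Collecting the bracketed quantity into $\nu^n$ gives exactly \eqref{eq:cond_dt_1}, namely $\Delta t\le \Delta w/\nu^n$. To replace this implicit bound by the explicit \eqref{eq:time1} I would use the boundedness of $P$ and $D$: since $0\le P\le1$, $|w_*-w|\le2$ and the total mass equals one, one gets $|\mathcal P[f]|\le 2$ and hence the estimates already recorded, $|B_{i\pm1/2}^n|\le 2+\sigma^2 M$ with $M=\max_i|D'_{i+1/2}|$ and $C_{i\pm1/2}\le\sigma^2/2$. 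Because $\delta_{i\pm1/2}\in[0,1]$, estimating the two $B$ contributions and the two $C$ contributions separately yields $\nu^n\le 2\big(2+\sigma^2 M+\tfrac{\sigma^2}{2\Delta w}\big)$, so \eqref{eq:time1} implies \eqref{eq:cond_dt_1}. With all three coefficients nonnegative, \eqref{eq:system_2_disc} sends nonnegative data to nonnegative data, which is the assertion.

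I expect the main obstacle to be bookkeeping rather than conceptual: tracking the correct signs of the off-diagonal coefficients after the weight substitution, and checking that the same argument survives in the boundary cells $i=1$ and $i=N$, where the zero-flux condition modifies the three-point stencil. The reduction to $t/(e^t-1)\ge0$ is the crux; the remaining steps are careful but routine estimation.
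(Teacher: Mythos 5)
Your proposal is correct and follows essentially the same route as the paper: rewrite the half-step as a three-point combination, reduce the nonnegativity of the off-diagonal coefficients to the elementary inequalities $xe^x/(e^x-1)\ge 0$ and $y/(e^y-1)\ge 0$ via the Chang--Cooper weights (the paper's $x\bigl(1-\tfrac{1}{1-e^x}\bigr)\ge 0$ is the same expression), and obtain the CFL bound for the diagonal coefficient from $|B^n_{i\pm1/2}|\le 2+\sigma^2 M$ and $C_{i\pm1/2}\le\sigma^2/2$. The only difference is your explicit caveat about the boundary cells, which the paper glosses over but which is handled by the zero-flux condition exactly as you anticipate.
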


Typically when $\sigma^2$ is large this will originate a parabolic stability condition that requires $\Delta t=O(\Delta w^2)$. This can be avoided taking the diffusive part implicitly, however, since we were mostly interested in the case of small values of $\sigma^2$ we will not pursue this direction here.

Next, we consider the second step 
\be
f_i^{n+1}(c) = f^{n+1/2}_i(c)-\Delta t \N[f_i^{n+1}(c)].
\ee
Note that in general the fully implicit evaluation of $\N[\cdot]$ would require the use of a suitable iterative solver due to the nonlinearity in $f_i^{n+1}$. We therefore will consider a semi-implicit linearized version of the operator.

The scheme can be written as 
\be\begin{split}\label{eq:transp_1}
&\left[ 1+d^{n+1/2}(c)+a^{n+1/2}(c)+b^{n+1/2}(c) \right]f_i^{n+1}(c)\\
&-a^{n+1/2}(c) f_i^{n+1}(c+1) -b^{n+1/2}(c) f_i^{n+1}(c-1) = f^{n+1/2}_i(c),
\end{split}\ee
where 
\be
\begin{split}
&a^{n+1/2}(c)=\Delta t v^{n+1/2}_r (c+1+\beta),\,\qquad c=0,\ldots,\cm-1\\
&b^{n+1/2}(c)=\Delta t v^{n+1/2}_a (c-1+\alpha),\,\,\qquad c=1,\ldots,\cm\\
&d^{n+1/2}(c)=-\Delta t v^{n+1/2}_r +\Delta t v^{n+1/2}_a,\quad c=1,\ldots,\cm-1\\
&a^{n+1/2}(\cm)=0,\qquad b^{n+1/2}(0)=0,\\
&d^{n+1/2}(0)=b^{n+1/2}(1)-a^{n+1/2}(0),\\
&d^{n+1/2}(\cm)=-b^{n+1/2}(\cm)+a^{n+1/2}(\cm-1),
\end{split}
\label{eq:quant}
\ee
and we have set $v^{n+1/2}_r={2V^{n+1/2}_r}/{(\gamma^{n+1/2}+\beta)}$ and $v^{n+1/2}_a={2V^{n+1/2}_a}/{(\gamma^{n+1/2}+\alpha)}$.
Since alle quantities $a^{n+1/2}(\cdot)$, $b^{n+1/2}(\cdot)$ defined in (\ref{eq:quant}) are nonnegative, equations \eqref{eq:transp_1}-\eqref{eq:quant} define a diagonally dominant matrix of size $(\cm+1)\times (\cm+1)$ if
\begin{align}
\nonumber
&\Delta t\le \frac1{v^{n+1/2}_r-v^{n+1/2}_a}, \,\,\,\,\quad\qquad\qquad\qquad\qquad\qquad \frac{v^{n+1/2}_r}{v^{n+1/2}_a} > 1,\\ 
&\Delta t\le \frac1{v^{n+1/2}_r(1+\beta)-v^{n+1/2}_a\alpha}, \qquad\qquad\qquad\qquad \frac{v^{n+1/2}_r}{v^{n+1/2}_a} > \frac{\alpha}{(1+\beta)},
\label{eq:time2}
\\ 
\nonumber
&\Delta t\le \frac1{v^{n+1/2}_a(\cm-1+\alpha)-v^{n+1/2}_r(\cm+\beta)}, \quad \frac{v^{n+1/2}_a}{v^{n+1/2}_r}>\frac{(\cm+\beta)}{(\cm-1+\alpha)}. 
\end{align}
Note that when the above conditions on $v^{n+1/2}_r$ and $v^{n+1/2}_a$ are not satisfied, no time step restriction occurs. Conditions (\ref{eq:time2}) are not restrictive since in practice $\gamma^{n+1/2} \gg 1$ and so $v^{n+1/2}_a \ll 1$ and $v^{n+1/2}_r \ll 1$. Thus we have 
\begin{proposition}
Under the time step restriction (\ref{eq:time2}) the second step in (\ref{eq:imex2}) preserves nonnegativity, namely $f_i^{n+1}(c)\geq 0$ if $f_i^{n+1/2}(c)\geq 0$, $i=1,\ldots,N$, $c \in {\mathcal C}$.
\end{proposition}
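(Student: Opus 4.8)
The plan is to read the second step \eqref{eq:transp_1} as a linear system in the connectivity variable, for each fixed spatial node, and to show that the associated matrix is monotone. Fix the spatial index $i$ and collect the unknowns into the vector $\mathbf{f}^{n+1}=(f_i^{n+1}(0),\dots,f_i^{n+1}(\cm))^{\top}$; the network operator couples only in $c$, so $i$ plays no role and the step decouples into $\cm+1$ independent tridiagonal problems $A\,\mathbf{f}^{n+1}=\mathbf{f}^{n+1/2}$, where $A=A^{n+1/2}$ is the $(\cm+1)\times(\cm+1)$ matrix whose row $c$ has diagonal entry $1+d^{n+1/2}(c)+a^{n+1/2}(c)+b^{n+1/2}(c)$, super-diagonal entry $-a^{n+1/2}(c)$ and sub-diagonal entry $-b^{n+1/2}(c)$, the boundary rows $c=0,\cm$ being read off from \eqref{eq:quant} with $a^{n+1/2}(\cm)=b^{n+1/2}(0)=0$. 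Since the right-hand side is $\mathbf{f}^{n+1/2}\ge 0$ by hypothesis, it suffices to prove that $A$ is invertible with $A^{-1}$ entrywise nonnegative.

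First I would record the sign structure. As already noted, $a^{n+1/2}(\cdot)\ge 0$ and $b^{n+1/2}(\cdot)\ge 0$, so every off-diagonal entry of $A$ is $\le 0$, i.e. $A$ is a $Z$-matrix; moreover, since $v^{n+1/2}_r,v^{n+1/2}_a>0$ and $\alpha>0$, one has $a^{n+1/2}(c)>0$ for $c<\cm$ and $b^{n+1/2}(c)>0$ for $c>0$, so $A$ is irreducible. It then remains only to establish positivity of the diagonal together with diagonal dominance, which is exactly the content stated before \eqref{eq:time2}.

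Next I would verify diagonal dominance row by row, observing that in each case the diagonal entry equals the sum of the off-diagonal magnitudes plus the term $1+d^{n+1/2}(c)$, so dominance reduces precisely to $1+d^{n+1/2}(c)\ge 0$. For an interior row $1\le c\le\cm-1$ this reads $\Delta t\,(v^{n+1/2}_r-v^{n+1/2}_a)\le 1$, which is nonrestrictive when $v^{n+1/2}_r\le v^{n+1/2}_a$ and becomes the first bound in \eqref{eq:time2} otherwise. For $c=0$, substituting $d^{n+1/2}(0)=b^{n+1/2}(1)-a^{n+1/2}(0)$ and $b^{n+1/2}(0)=0$ collapses the condition to $1+\Delta t\,v^{n+1/2}_a\alpha-\Delta t\,v^{n+1/2}_r(1+\beta)\ge 0$, the second bound in \eqref{eq:time2}; for $c=\cm$, substituting $d^{n+1/2}(\cm)=-b^{n+1/2}(\cm)+a^{n+1/2}(\cm-1)$ and $a^{n+1/2}(\cm)=0$ collapses it to $1-\Delta t\,v^{n+1/2}_a(\cm-1+\alpha)+\Delta t\,v^{n+1/2}_r(\cm+\beta)\ge 0$, the third bound. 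In each row the diagonal entry is then positive, being the nonnegative off-diagonal sum plus the nonnegative quantity $1+d^{n+1/2}(c)$ (at the boundaries it equals $1+b^{n+1/2}(1)$ and $1+a^{n+1/2}(\cm-1)$, hence is $\ge 1$).

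Finally I would conclude by monotonicity: an irreducible, diagonally dominant $Z$-matrix with positive diagonal is a nonsingular $M$-matrix, so $A^{-1}$ exists and satisfies $A^{-1}\ge 0$ entrywise, whence $\mathbf{f}^{n+1}=A^{-1}\mathbf{f}^{n+1/2}\ge 0$; since $i$ was arbitrary this gives $f_i^{n+1}(c)\ge 0$ for all $i$ and all $c\in\mathcal{C}$. An equivalent and edge-case-robust alternative is a discrete minimum principle: if $\min_c f_i^{n+1}(c)$ were negative and attained at $c^\star$, rewriting row $c^\star$ as $(1+d^{n+1/2}(c^\star))f_i^{n+1}(c^\star)+a^{n+1/2}(c^\star)(f_i^{n+1}(c^\star)-f_i^{n+1}(c^\star+1))+b^{n+1/2}(c^\star)(f_i^{n+1}(c^\star)-f_i^{n+1}(c^\star-1))=f_i^{n+1/2}(c^\star)$ forces the left-hand side to be $\le 0$ while the right-hand side is $\ge 0$, and irreducibility propagates the minimum across $\mathcal{C}$ to produce a contradiction. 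I expect the main obstacle to be the bookkeeping at the two boundary rows: the special definitions of $d^{n+1/2}(0)$ and $d^{n+1/2}(\cm)$ must be inserted correctly so that the dominance inequalities line up exactly with the second and third bounds of \eqref{eq:time2}, and one must secure enough (irreducible) diagonal dominance to pass from mere invertibility of $A$ to the nonnegativity of $A^{-1}$.
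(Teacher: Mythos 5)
Your proposal is correct and follows essentially the same route as the paper: rewrite the semi-implicit network step as the tridiagonal system \eqref{eq:transp_1}--\eqref{eq:quant} with nonnegative off-diagonal coefficients $a^{n+1/2},b^{n+1/2}$, check row by row that the dominance inequalities $1+d^{n+1/2}(c)\ge 0$ reproduce exactly the three bounds in \eqref{eq:time2}, and conclude nonnegativity from the resulting $M$-matrix structure. The only differences are cosmetic: you spell out the monotonicity/minimum-principle step that the paper leaves implicit, and the phrase ``decouples into $\cm+1$ independent tridiagonal problems'' should read ``decouples over the spatial index $i$ into independent tridiagonal problems of size $\cm+1$''.
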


\begin{remark} In particular, in the case where the rates are defined by (\ref{eq:rates}) since 
\[
g^{n+1}_i = \sum_{c=0}^{\cm}  f_i^{n+1}(c) = \sum_{c=0}^{\cm}  f_i^{n+1/2}(c) = g^{n+1/2}_i,
\]
the previous arguments applies to the fully implicit evaluation of $V^{n+1}_a=V_a(f_i^{n+1};w_i)$ and $V^{n+1}_r=V_r(f_i^{n+1};w_i)$.
\end{remark}

\subsection{Conservations and stability}
Let us consider the conservation properties of the scheme with respect to the variable $w$. Let us observe that from scheme \eqref{eq:imex} we get
\begin{equation}
\sum_{i=0}^N f_i^{n+1}(c)=\sum_{i=0}^{N}f_i^{n}(c)-\Delta t \sum_{i=0}^{N}\N[f_i^{n+1}]+
\dfrac{\Delta t}{\Delta w}\sum_{i=0}^{N}\left(\mathcal{F}_{i+1/2}^n-\mathcal{F}_{i-1/2}^n\right).
\end{equation}
Now since
\begin{equation*}\begin{split}
\sum_{i=0}^{N}\left(\mathcal{F}_{i+1/2}^n-\mathcal{F}_{i-1/2}^n\right)&=\sum_{i=0}^{N-1}\mathcal{F}_{i+1/2}^n-\sum_{i=1}^N\mathcal{F}_{i-1/2}^n+\mathcal{F}_{N+1/2}^n-\mathcal{F}_{-1/2}^n \\
&= \mathcal{F}_{N+1/2}^n-\mathcal{F}_{-1/2}^n,
\end{split}\end{equation*}
by imposing no-flux boundary conditions, i.e.
\be
\mathcal{F}_{N+1/2}^n=0,\qquad\mathcal{F}_{-1/2}^n = 0,
\ee
we obtain that for all $n\ge 0$ the following conservation equation for the density of connections is satisfied
\be
 \rho^{n+1}(c)=\rho^n(c)-\Delta t \sum_{i=0}^{N}\N[f_i^{n+1}].
\ee
Summing over $c$ in the above equation yields the conservation of the total number of agents
\be
\sum_{c=0}^{\cm}\rho^{n+1}(c)=\sum_{c=0}^{\cm}\rho^n(c).
\ee
From this identity we have
\begin{proposition}
Under the time step restrictions (\ref{eq:time1}) and (\ref{eq:time2}), the numerical scheme defined by  \eqref{eq:imex} is stable in the discrete $L_1$-norm.
\end{proposition}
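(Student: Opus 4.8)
The plan is to obtain $L_1$-stability directly from the two nonnegativity propositions established above together with the mass-conservation identity just derived, without any new estimate. First I would fix the discrete $L_1$-norm
\[
\|f^n\|_1 = \Delta w \sum_{i=0}^{N}\sum_{c=0}^{\cm} |f_i^n(c)|,
\]
and assume, as throughout, a nonnegative initial datum $f_i^0(c)\ge 0$.

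The key observation is that the two preceding propositions guarantee that each half-step of the splitting \eqref{eq:imex2} propagates nonnegativity: under \eqref{eq:time1} the Chang--Cooper step yields $f_i^{n+1/2}(c)\ge 0$, and under \eqref{eq:time2} the implicit network step yields $f_i^{n+1}(c)\ge 0$. By induction on $n$, the full scheme \eqref{eq:imex} therefore produces $f_i^n(c)\ge 0$ for all $i$, $c$ and $n\ge 0$. Consequently the absolute values in the norm may be dropped, so that
\[
\|f^n\|_1 = \Delta w \sum_{i=0}^{N}\sum_{c=0}^{\cm} f_i^n(c).
\]

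It then remains only to control the right-hand side, which is exactly the total mass. Here I would invoke the conservation identity $\sum_{c=0}^{\cm}\rho^{n+1}(c)=\sum_{c=0}^{\cm}\rho^n(c)$ derived above with $\rho^n(c)=\sum_{i=0}^{N} f_i^n(c)$, itself a consequence of the no-flux boundary conditions together with the conservative (telescoping) form of the flux $\mathcal{F}_{i\pm 1/2}^n$ and of the network operator. Summing this identity over $c$ gives $\sum_{i,c} f_i^{n+1}(c)=\sum_{i,c} f_i^n(c)$, and hence $\|f^{n+1}\|_1=\|f^n\|_1$ for every $n$. The discrete $L_1$-norm is thus exactly preserved, which in particular yields the claimed stability in the strong form $\|f^n\|_1=\|f^0\|_1$.

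Since both the nonnegativity and conservation ingredients are already in hand, I do not expect a genuine analytical obstacle; the only point requiring care is bookkeeping. One must ensure that the nonnegativity results and the conservation identity are applied under one and the same splitting \eqref{eq:imex2}, that the telescoping cancellation in the $w$-flux and the summation over $c$ are carried out consistently, and that the norm is built with the same cell measure $\Delta w$ at every step. Once these are aligned, the chain $\|f^n\|_1 = \Delta w \sum_{i,c} f_i^n(c) = \|f^0\|_1$ closes the argument.
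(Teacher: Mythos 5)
Your argument is correct and coincides with the paper's own (largely implicit) reasoning: the two nonnegativity propositions let you identify the discrete $L_1$-norm with the total mass, and the conservation identity $\sum_{c}\rho^{n+1}(c)=\sum_{c}\rho^{n}(c)$, obtained from the telescoping flux with no-flux boundary conditions and the conservative network operator, then gives $\|f^{n+1}\|_1=\|f^{n}\|_1$. No gap; the paper proves the proposition in exactly this way.
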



\begin{thebibliography}{99}
\bibitem{Ace} D.~Acemoglu, O.~Asuman. Opinion dynamics and learning in social networks. \emph{Dynamic Games and Applications}, 1, 3--49, 2011.
\bibitem{AB} R.~Albert, A.-L.~Barabàsi. Statistical mechanics of complex networks. \emph{Reviews of modern physics}, 74(1): 1--47, 2002.
\bibitem{AP} G.~Albi, L.~Pareschi. Binary interaction algorithm for the simulation of flocking and swarming dynamics. \emph{SIAM Journal on Multiscale Modeling and Simulations}, 11(1), 1--29, 2013.
\bibitem{AHP} G.~Albi, M.~Herty, L.~Pareschi. Kinetic description of optimal control problems and applications to opinion consensus. \emph{Communications in Mathematical Sciences}, 13(6): 1407--1429, 2015.
\bibitem{APZa} G.~Albi, L.~Pareschi, M.~Zanella. Boltzmann-type control of opinion consensus through leaders. \emph{Philosophical Transactions of the Royal Society of London A: Mathematical, Physical and Engineering Sciences}, 372(2028): 20140138, 2014.
\bibitem{APZc}  G.~Albi, L.~Pareschi, M.~Zanella. On the optimal control of opinion dynamics on evolving networks. \emph{IFIP TC7 2015 Proceedings}, to appear.
\bibitem{ASBS} L.~A.~N.~Amaral, A.~Scala, M.~Bathélemy, H.E.~Stanley. Classes of small-world networks. \emph{Proceedings of the National Academy of Sciences of the United States of America}, 97(21): 11149--11152, 2000.
\bibitem{BA} A.-L.~Barabàsi, R.~Albert. Emergence of scaling in random networks. \emph{Science}, 286(5439): 509--512, 1999.
\bibitem{BAJ} A.-L.~Barabàsi, R.~Albert, H.~Jeong. Mean-field theory for scale-free random networks. \emph{Physica A: Statistical Mechanics and its Applications}, 272(1): 173--187, 1999.
\bibitem{BMS} L.~Boudin, R.~Monaco, F.~Salvarani. Kinetic model for multidimensional opinion formation. \emph{Physical Review E}, 81(3): 036109, 2010.
\bibitem{BrTo} C.~Brugna, G.~Toscani. Kinetic models of opinion formation in the presence of personal conviction. \emph{Physical Review E}, 92, 052818, 2015.
\bibitem{BCDS} C.~Buet, S.~Cordier, V.~Dos Santos. A conservative and entropy scheme for a simplified model of granular media. \emph{Transport Theory and Statistical Physics}, 33(2): 125--155, 2004.
\bibitem{BD} C.~Buet, S.~Dellacherie. On the Chang and Cooper numerical scheme applied to a linear Fokker-Planck equation. \emph{Communications in Mathematical Sciences}, 8(4): 1079--1090, 2010.
\bibitem{CC} J.~S.~Chang, G.~Cooper. A practical difference scheme for Fokker-Planck equation. \emph{Journal of Computational Physics}, 6: 1--16, 1970.
\bibitem{Das} A.~Das, S.~Gollapudi, K.~Munagala. \emph{Modeling opinion dynamics in social networks}, Proceedings of the 7th ACM international conference on Web search and data mining,  ACM New York, 403--412, 2014. 
\bibitem{DL} M.~Dolfin, L.~Miros\l{}av. Modeling opinion dynamics: how the network enhances consensus. \emph{Networks \& Heterogeneous Media}, 10(4): 877-896, 2015.
\bibitem{DMPW} B.~D\"uring, P.~A.~Markowich, J.-F.~Pietschmann, M.-T.~Wolfram. Boltzmann and Fokker-Planck equations modelling opinion formation in the presence of strong leaders. \emph{Proceedings of the Royal Society of London A}, 465(2112): 3687--3708, 2009.
\bibitem{DT} B.~D\"uring, M.-T.~Wolfram. Opinion dynamics: inhomogeneous Boltzmann-type equations modelling opinion leadership and political segregation. \emph{Proceedings of the Royal Society of London A}, 471(2182):20150345, 2015.
\bibitem{HK} R.~Hegselmann, U.~Krause. Opinion dynamics and bounded confidence, models, analysis and simulation. {\em Journal of Artifcial Societies and Social Simulation}, 5(3):2, 2002.
\bibitem{LLPS} E.~W.~Larsen, C.~D.~Levermore, G.~C.~Pomraning, J.~G.~Sanderson. Discretization methods for one-dimensional Fokker-Planck operators. \emph{Journal of Computational Physics}, 61: 359--390, 1985.
\bibitem{N} M. E. J.~Newman. The structure and function on complex networks. \emph{SIAM Review}, 45(2): 167--256, 2003.
\bibitem{MB} M.~Mohammadi, A.~Borz\`{i}. Analysis of the Chang-Cooper discretization scheme for a class of Fokker-Planck equations. \emph{Journal of Numerical Mathematics}, 23(3): 271--288, 2015.
\bibitem{PR} L.~Pareschi, G.~Russo. An introduction to Monte Carlo methods for the Boltzmann equation. \emph{ESAIM: Proceedings}, EDP Sciences. Vol. 10: 35--75, 2001. 
\bibitem{PTa} L.~Pareschi, G.~Toscani. \emph{Interacting Multiagent Systems. Kinetic Equations and Monte Carlo Methods}. Oxford University Press, 2013.
\bibitem{PTb} L.~Pareschi, G.~Toscani. Wealth distribution and collective knowledge: a Boltzmann approach. \emph{Philosophical Transactions of the Royal Society of London A: Mathematical, Physical and Engineering Sciences}, 372(2028): 20130396, 2014.

\bibitem{Patt} S.~Patterson, B.~Bamieh. \emph{Interaction-driven opinion dynamics in online social networks}, Proceedings of the First Workshop on Social Media Analytics, ACM New York, 98--110, 2010 

\bibitem{S} S.~H.~Strogatz. Exploring complex networks. \emph{Nature}, 410(6825): 268--276, 2001.
\bibitem{SWS} K.~Sznajd-Weron, J.~Sznajd. Opinion evolution in closed community. \emph{International Journal of Modern Physics C}, 11(6): 1197--1165, 2000.
\bibitem{T} G.~Toscani. Kinetic models of opinion formation. \emph{Communications in Mathematical Sciences}, 4(3): 481--496, 2006.
\bibitem{WS} D.~J.~Watts, S.~H.~Strogatz. Collective dynamics of 'small-world' networks. \emph{Nature}, 393: 440--442, 1998.
\bibitem{XZW} Y.-B.~Xie, T.~Zhou, B.-H.~Wang. Scale-free networks without growth. \emph{Physica A}, 387: 1683--1688, 2008.
\end{thebibliography}
\end{document}